\documentclass[11pt,reqno]{amsart}
\usepackage[top=3cm,bottom=3cm,inner=3.5cm,outer=3.5cm,includehead,includefoot]{geometry}
\usepackage[T1]{fontenc}
\usepackage{amsmath}
\usepackage{amssymb}
\usepackage{amsthm}
\usepackage{calc}
\usepackage{enumitem}
\usepackage{mathrsfs}
\usepackage{paralist}
\usepackage{tikz}
\usetikzlibrary{shapes.misc,calc,intersections,patterns}
\usepackage[ocgcolorlinks]{hyperref}

\hyphenation{an-a-lys-ing an-iso-trop-ic as-ymp-tot-ic au-tom-a-ta au-tom-a-ton be-hav-iour col-umns dis-trib-uted meta-sta-bil-ity meth-od non-zero pre-print process rec-tan-gle rec-tan-gles rec-tan-gu-lar sta-tis-tics}

\numberwithin{equation}{section}

\newtheorem{theorem}{Theorem}[section]
\newtheorem{lemma}[theorem]{Lemma}

\newtheorem{claim}[theorem]{Claim}
\newtheorem{corollary}[theorem]{Corollary}
\newtheorem{conjecture}[theorem]{Conjecture}

\theoremstyle{remark}
\newtheorem{remark}[theorem]{Remark}

\newenvironment{enumerate*}{\begin{compactenum}[(i)]}{\end{compactenum}}
\def\E{\mathbb{E}}
\def\N{\mathbb{N}}
\def\P{\mathbb{P}}
\def\R{\mathbb{R}}
\def\T{\mathbb{T}}
\def\Z{\mathbb{Z}}
\renewcommand{\leq}{\leqslant}
\renewcommand{\geq}{\geqslant}

\newcommand{\abs}[1]{\lvert #1 \rvert}
\newcommand{\norm}[1]{\lVert #1 \rVert}
\newcommand{\pos}{\mathcal{P}}
\newcommand{\NN}{\mathcal{N}}
\newcommand{\free}{\mathcal{F}}
\newcommand{\configs}{\mathcal{C}_d}
\DeclareMathOperator{\Po}{Po}
\DeclareMathOperator{\dTV}{d_{TV}}

\title{The time of bootstrap percolation with dense initial sets}

\author[B. Bollob\'as]{B\'ela Bollob\'as}
\address{Trinity College, Cambridge, CB2 1TQ, UK, and Department of Mathematical Sciences, University of Memphis, Memphis, TN 38152, USA}
\email{\href{mailto:b.bollobas@dpmms.cam.ac.uk}{b.bollobas@dpmms.cam.ac.uk}}

\author[C. Holmgren]{Cecilia Holmgren}
\address{Department of Pure Mathematics and Mathematical Statistics, Wilberforce Road, Cambridge, CB3 0WA, UK}
\email{\href{mailto:c.holmgren@dpmms.cam.ac.uk}{c.holmgren@dpmms.cam.ac.uk}}

\author[P.J. Smith]{Paul Smith}
\email{\href{mailto:p.j.smith@dpmms.cam.ac.uk}{p.j.smith@dpmms.cam.ac.uk}}

\author[A.J. Uzzell]{Andrew J. Uzzell}
\address{Department of Mathematical Sciences, University of Memphis, Memphis, TN 38152, USA}
\email{\href{mailto:ajuzzell@memphis.edu}{ajuzzell@memphis.edu}}

\date{\today}

\thanks{The first and fourth authors were partially supported by ARO grant W911NF-06-1-0076 and by NSF grant DMS-0906634.  The second author was supported by a grant from the Swedish Research Council.  The second and third authors are grateful to the University of Memphis, where some of this research was carried out.  The fourth author is grateful to the University of Cambridge, where some of this research was carried out.}

\subjclass[2010]{Primary 60K35; Secondary 60C05}
\keywords{Bootstrap percolation, sharp threshold, Stein-Chen method}

\begin{document}

\begin{abstract}
In $r$-neighbour bootstrap percolation on the vertex set of a graph~$G$, vertices are initially infected independently with some probability~$p$.  At each time step, the infected set expands by infecting all uninfected vertices that have at least~$r$ infected neighbours.  When $p$ is close to~1, we study the distribution of the time at which all vertices become infected.  Given $t = t(n) = o(\log n/\log\log n)$, we prove a sharp threshold result for the probability that percolation occurs by time~$t$ in $d$-neighbour bootstrap percolation on the $d$-dimensional discrete torus~$\T_n^d$.  Moreover, we show that for certain ranges of~$p = p(n)$, the time at which percolation occurs is concentrated either on a single value or on two consecutive values.  We also prove corresponding results for the modified $d$-neighbour rule.
\end{abstract}

\maketitle

\section{Introduction}\label{se:fastintro}

Bootstrap percolation is an example of a cellular automaton, a concept developed by von~Neumann~\cite{vN} following a suggestion of Ulam~\cite{Ulam}.  Bootstrap percolation was introduced by Chalupa, Leath, and Reich~\cite{ChaLeathReich} in the context of the Blume-Capel model of ferromagnetism. In bootstrap percolation on the vertex set of a graph~$G$, vertices have two possible states, `infected' and `uninfected'.  Let $r \in \N$, let $G$ be a locally finite graph, and let $A \subset V(G)$ denote the set of initially infected vertices.  In this paper, as often, elements of~$A$ are chosen independently at random with some probability~$p$.  In \emph{$r$-neighbour bootstrap percolation}, infected vertices remain infected, and if an uninfected vertex has at least~$r$ infected neighbours, then it becomes infected.  Formally, setting $A_0 = A$ and letting $N(v)$ denote the neighbourhood of~$v$, we have
\[
A_{t+1} = A_t \cup \{v \, : \, \abs{N(v) \cap A_t} \geq r\}
\]
for all~$t \geq 0$.  If, for some $t$, we have $A_t = V(G)$, we say that $A$ \emph{percolates $G$}, or simply that $A$ \emph{percolates}.

Van~Enter~\cite{vanEnter} and Schonmann~\cite{Schon} showed that for $G = \Z^d$ and $p \in (0, 1)$, under the standard $r$-neighbour model, if $r\leq d$, then percolation almost surely occurs; while if $r \geq d + 1$, then percolation almost surely does not occur.

In the case of $r$-neighbour bootstrap percolation on the $d$-dimensional grid~$[n]^d$, where $d \geq r \geq 2$, the probability of percolation displays a \emph{sharp threshold}.  That is, there exists a value~$p_c = p_c(n)$ such that for all~$\varepsilon > 0$, if $p < (1 - \varepsilon)p_c$, then the probability of percolation is close to~0, while if $p > (1 + \varepsilon)p_c$, then the probability of percolation is close to~1.
Models for which sharp thresholds are now known to exist include $r$-neighbour bootstrap percolation on $[n]^d$, for every $2\leq r\leq d$ (see~\cite{BBDCM,BBM3D,GravHol,GravHolMor,Hol}); various other update rules on $\Z^2$ (see~\cite{HolModAllD,BringMahl,BringMahlMel,DCvE,DCHol,HLR}); and two-neighbour percolation on the hypercube~$\{0, 1\}^n$ (see~\cite{BBHyp}).

We note that Balogh and Bollob\'as~\cite{BBSharp} studied a different notion of sharp threshold for two-neighbour bootstrap percolation on $[n]^d$.  With the threshold~$r$ implicit, set
\[
P(G, \alpha) = \inf\{p : \P_p(G \text{ percolates in $r$-neighbour bootstrap percolation}) \geq \alpha\}.
\]
Balogh and Bollob\'as showed that for any~$\varepsilon > 0$, $P([n]^d, 1 - \varepsilon) - P([n]^d, \varepsilon) = o\bigl(p_c(n)\bigr)$.

In the case of~$\Z^d$, the probability that the initially infected set~$A$ percolates $[n]^d$ turns out to be closely related to the probability that the origin becomes infected by time~$n$ if the process is run on $\Z^d$.  Set $T_0 = \min\{t: 0 \in A_t\}$. Andjel, Mountford, and Schonmann~\cite{Andjel,AMS,Mountford,Schon} proved sharp results about the limiting behaviour of the probability that $T_0$ is at least some fixed~$t$.

In bootstrap percolation, extremal results are often important for proving probabilistic results.  At first, this may seem surprising, but in fact, it is quite natural.  The reason that extremal results are important is that the only randomness in the process occurs in the initial infection process.  Consequently, in proving results about bootstrap percolation, much of the work often involves analysing the deterministic evolution of an arbitrary initial configuration.

One of the first extremal results in bootstrap percolation was a result of Morris~\cite{MorrisMin} on the largest size of a minimal set that percolates in $[n]^2$.  Later, Riedl~\cite{RiedlHyp} continued this work in the case of standard two-neighbour percolation on the hypercube. Riedl~\cite{RiedlTree} also gave bounds on the sizes of the largest and smallest minimal percolating sets for $r$-neighbour percolation in the case when $r \geq 2$ and $G$ is a tree on $n$ vertices with $\ell$ vertices of degree less than~$r$.  The first extremal result on the time of percolation was a theorem of Benevides and Przykucki~\cite{BenPrz} that answered the extremal question of finding the maximum percolating time on an $a\times b$ rectangular grid. If $A$ percolates an $a \times b$ grid, it is not hard to show that $\abs{A} \geq \lceil (a + b)/2 \rceil$ (see \cite{BalPete} or~\cite{coffeetime}).  When $a=b=n$, Benevides and Przykucki proved that if $A$ is a percolating set of size exactly~$n$, then $A$ percolates in time at most~$\tfrac{5}{8}n^2+O(n)$, while if $A$ is any percolating set, then $A$ percolates in time at most~$\tfrac{13}{18}n^2+O(n)$. They proved that both bounds are tight.  Przykucki~\cite{PrzHyp} proved corresponding results for two-neighbour percolation on the hypercube.

If $A$ percolates $V(G)$, we define the \emph{time of percolation} or \emph{percolation time} to be
\[
T := T(G; A) := \min\{t \, : \, A_t = V(G)\}.
\]
In the probabilistic setting, perhaps the most natural question that one could ask about the time of percolation is the following: given a bootstrap process and an initial probability such that percolation occurs with high probability, what is the percolation time~$T$?  In this paper we give a complete answer to this question in the case of $d$-neighbour bootstrap percolation on the discrete $d$-dimensional torus, when the percolation time~$T$ is small (or, equivalently, when the probability~$p$ is close to~$1$).  In order to answer this probabilistic question, we shall need to prove several extremal results about sets that do not percolate within a given time.

In~\cite{JLTV}, Janson, {\L}uczak, Turova, and Vallier determined the asymptotic time of percolation for $r$-neighbour percolation on an Erd\H{o}s-R\'enyi random graph.  To the best of our knowledge, this question has not been otherwise studied.

Our main aim is to show that with high probability the percolation time~$T$ is in a certain small interval.  To that end, our main task will be to show that for any not-too-large value of~$t$, the number of uninfected vertices at time~$t$ is asymptotically Poisson distributed. It will follow that the probability that $T$ is at most~$t$ is asymptotically the probability that a Poisson random variable equals~0.  To prove Poisson convergence, we use the Stein-Chen method~\cite{Chen,Stein}, a tool often applied to prove convergence in distribution. The power of the Stein-Chen method is that it only requires knowledge of the first two moments of the distribution for which we are trying to prove convergence. Obtaining good bounds on these first two moments occupies the majority of this paper.

In the literature of percolation theory, it is common to refer to the vertices of a graph as `sites'.  In this paper, we shall use the terms `vertex' and `site' interchangeably.

\begin{remark}\label{re:ALPoisson}
Aizenman and Lebowitz~\cite{AizLeb} observed that in bootstrap percolation on $[n]^d$, the event that the infected set percolates depends on the formation of a ``critical droplet'', that is, of an infected cube of side length on the order of~$\log n$.  They also observed that for $n$ not too large compared to~$p$, the events that different cubes of this size become fully infected are nearly independent. This adds weight to the hypothesis that the behaviour of the number of uninfected sites should be approximately Poisson distributed.
\end{remark}

Our main tool in proving the sharp threshold result for the time of percolation is the solution to an extremal problem that may be of independent interest.  Namely, we wish to determine the maximum size of a set that does not infect a given site (which we can assume is the origin) by time~$t$. Equivalently, we would like to determine, for all $t\geq 1$ and~$d\geq 2$, the function
\begin{equation}\label{eq:extremalq}
\operatorname{ex}(t, d) := \min_{A\subset\Z^d} \bigl\{\lvert \Z^d\setminus A \rvert \, : \, 0\notin A_t\bigr\},
\end{equation}
where, as before, $A$ denotes the set of initially infected sites.

Which configurations of uninfected sites guarantee that the origin is uninfected at time~$t$? It is easy to see that the event that the origin is uninfected at time~$t$ is independent of the states of sites at $\ell_1$ distance greater than~$t$ from the origin. One such configuration is an empty $\ell_1$ ball of radius~$t$ about the origin. Another configuration, with far fewer sites than the whole $\ell_1$ ball, is a set of the form
\begin{equation}\label{eq:column}
C_t := \bigl\{x = (\varepsilon_1,\dots,\varepsilon_{d-1},r) \, : \, \lVert x\rVert_1\leq t \text{ and } \varepsilon_i\in\{0,1\} \text{ for } i\in[d-1]\bigr\};
\end{equation}
we may think of this set as a column vertically centred at the origin. In fact, we prove that the minimum quantity~\eqref{eq:extremalq} is equal to the size of this set; moreover, we show that that these columns are essentially the only sets that both achieve the minimum and guarantee that $0 \notin A_t$.

The following quantity, which is the size of the set in \eqref{eq:column}, is the number of sites with $\ell_1$ norm at most~$t$ in a column centred at the origin. Set
\begin{equation}\label{eq:minball}
m_t := m _{t,d} := \abs{C_t} = \sum_{r=0}^t \Bigg( 2\sum_{j=0}^{r-1} \binom{d}{j} + \binom{d}{r} \Bigg) = \sum_{r = 0}^t \sum_{j = 0}^r \dbinom{d}{j}.
\end{equation}
(We follow the convention that $\binom{d}{j} = 0$ whenever $j>d$.) We shall show that $\operatorname{ex}(t,d) = m_{t,d}$.  In fact, we shall prove a more general result about the minimum number of uninfected vertices that are distance~$k$ from a vertex~$x$ that remains uninfected for a long time.

\begin{remark}\label{re:anisotropic}
While studying thresholds for percolation for certain anisotropic bootstrap percolation models, van~Enter and Hulshof~\cite{vEH} and Mountford~\cite{MountfordSemi} studied the event that a single or double column is \emph{full}.
\end{remark}

These extremal results allow us to determine bounds on both the mean and the variance of the distribution of the number of sites that are uninfected at time~$t$; in turn, this enables us to prove Poisson convergence for values of~$t = t(n)$ up to~$o(\log\log n$). In order to extend this range of~$t$ to~$o(\log n/\log\log n)$, we need much stronger bounds on the mean and variance. The mean is proportional to the probability~$p_1$ that a given site (which we can assume is the origin) is uninfected at time~$t$. Thus we can express $p_1$ in terms of the number of initially uninfected sites inside the $\ell_1$ ball of radius~$t$. The results described above bound the first (and dominant) term in this expansion, namely, the one corresponding to the minimum number of uninfected sites such that the origin is still uninfected at time~$t$. In order to bound the mean and variance of our distribution, we need not just bounds on the highest-order term in the expansion of~$p_1$, but also on all of the other terms. In other words, we need to understand the number of configurations of uninfected sites when the number of uninfected sites preventing the origin from becoming infected by time~$t$ is a just a few more than the minimum. Roughly, we prove that if the number of uninfected sites is not much more than the minimum, then the configuration is close to an extremal configuration.  This stability result gives stronger bounds on the first two moments of our distribution.

Before we can state our main results, we need to formalize our notation. The discrete $d$-dimensional torus~$\T_n^d$ is the graph with vertex set~$(\Z/n\Z)^d$ in which vertices are adjacent if and only if their $\ell_1$ distance is exactly~$1$. As usual, let $A$ be a random subset of~$\T_n^d$ in which vertices are infected independently with probability~$p$, and let $\P_p$ be the associated product probability measure. Let $T = T(\T_n^d)$. Given $t\in\N$ and~$\alpha \in [0, 1]$, we set
\begin{equation}\label{eq:critprob}
p_{\alpha}(t) := \inf\{p \, : \, \P_p(T \leq t)\geq \alpha\}.
\end{equation}
In sharp threshold results for the probability of percolation, it is common to define $\inf\{p : \P_p(A \text{ percolates}) \geq 1/2\}$ to be the \emph{critical probability}, denoted $p_c$, and to show that the probability of percolation displays a sharp threshold at $p_c$.  However, in the proofs of these results, the choice of~$1/2$ is irrelevant: it turns out that for any constant~$\alpha \in (0, 1)$, $p_{\alpha} = \bigl(1 + o(1)\bigr)p_{1/2}$.  In our case, $p_{\alpha}(t)$ is different for different values of $\alpha \in (0, 1)$.

The first main result of this paper is as follows.  As usual, given~$p \in [0, 1]$, we write $q = 1 - p$.
\begin{theorem}\label{th:crittime}
Let $d\geq 2$, let $t = o(\log n/\log\log n)$, let $(p_n)_{n=1}^\infty$ be a sequence of probabilities, let $\omega(n) \to \infty$, and let $T = T(\T_n^d)$.  Under the standard $d$-neighbour model,
\begin{enumerate*}
\item if, for all $n$, $q_n \leq \bigl(n^{-d}\!/\omega(n)\bigr)^{1/m_{t}}$, then $\P_{p_n}(T \leq t) \to 1$ as $n\to\infty$;
\item if, for all $n$, $q_n \geq \bigl(n^{-d} \omega(n)\bigr)^{1/m_{t}}$, then $\P_{p_n}(T \leq t) \to 0$ as $n\to\infty$.
\end{enumerate*}
Moreover, for any $\alpha \in (0, 1)$,
\[
p_{\alpha}(t) = 1 - \bigl(1+o(1)\bigr)\biggl(\dfrac{\log\left(\frac{1}{\alpha}\right)}{d^3 2^{d-1} n^d}\biggr)^{\frac{1}{m_{t,d}}}.
\]
\end{theorem}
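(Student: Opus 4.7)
The plan is to prove that $X_t$, the number of vertices uninfected at time~$t$, is asymptotically Poisson with parameter $\lambda_t := \E_{p_n}[X_t] = n^d p_1$, where $p_1 := \P_{p_n}(0 \notin A_t)$ by vertex-transitivity of~$\T_n^d$. Once this is done, $\P_{p_n}(T \leq t) = \P(X_t = 0) = (1 + o(1)) e^{-\lambda_t}$, and the three conclusions follow by tracking $\lambda_t$ under each hypothesis.

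The first main step is to pin down the marginal probability~$p_1$ to leading order. The event $\{0 \notin A_t\}$ depends only on the initial configuration inside the $\ell_1$-ball $B_t$ about the origin, so
\[
p_1 = \sum_{k \geq m_t} N_{k, t}\, q_n^k\, p_n^{\abs{B_t} - k},
\]
where $N_{k, t}$ counts initial configurations in $B_t$ with exactly $k$ uninfected sites for which $0 \notin A_t$. The extremal theorem gives $N_{k, t} = 0$ for $k < m_t$, and the stability theorem characterises $N_{m_t, t}$: the minimisers are essentially columns together with their admissible local modifications near the two endpoints, and a careful enumeration should give the constant $d^3 2^{d-1}$. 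Throughout the range $t = o(\log n / \log\log n)$ one has $q_n \abs{B_t} = o(1)$, so $p_n^{\abs{B_t} - m_t} = 1 + o(1)$, and the tail $\sum_{j \geq 1} N_{m_t + j, t}\, q_n^j$ is also controlled by the stability theorem; this yields $p_1 = (1 + o(1))\, d^3 2^{d-1}\, q_n^{m_t}$.

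The second main step is a Stein--Chen Poisson approximation for $X_t = \sum_v I_v$, with $I_v := \mathbf{1}\{v \notin A_t\}$. Declare $u \sim v$ whenever $d(u, v) \leq 2t$: for $u \not\sim v$ the balls $B_t(u)$ and $B_t(v)$ are disjoint and $I_u, I_v$ are independent. The Stein--Chen bound on $\dTV(X_t, \Po(\lambda_t))$ is governed by $b_1 := \sum_v \sum_{u \sim v} \P(I_u = 1)\P(I_v = 1)$ and $b_2 := \sum_v \sum_{u \sim v,\, u \neq v} \P(I_u = I_v = 1)$, and both must be shown to be $o(\lambda_t)$. The bound on~$b_1$ is immediate since $\abs{\{u : u \sim v\}} = O(t^d)$. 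For~$b_2$ one needs a two-site extremal result: for each $u \neq v$ with $u \sim v$, the joint blocking problem on $B_t(u) \cup B_t(v)$ requires strictly more than $m_t$ uninfected sites, with the excess growing in $d(u, v)$; summing along shells of common distance then gives $b_2 = o(\lambda_t)$. Hence $\dTV(X_t, \Po(\lambda_t)) \to 0$.

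Combining the ingredients, $\lambda_t = (1 + o(1))\, c\, n^d q_n^{m_t}$ with $c = d^3 2^{d-1}$. Under~(i), $\lambda_t \leq c / \omega(n) \to 0$, so $\P(X_t = 0) \to 1$; under~(ii), $\lambda_t \geq c\, \omega(n) \to \infty$, so $\P(X_t = 0) \to 0$; and solving $e^{-\lambda_t} = \alpha$ yields the stated formula for $p_\alpha(t)$. The main obstacle is maintaining multiplicative $(1 + o(1))$ control on $p_1$ throughout the whole range $t = o(\log n / \log\log n)$. For small $t = o(\log\log n)$, $\abs{B_t}$ is bounded and the extremal bound plus a crude bound on $N_{m_t + 1, t}$ is enough; for larger~$t$, $\abs{B_t}$ grows polynomially and one must control every coefficient $N_{m_t + j, t}$, which is precisely what the full stability theorem delivers. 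The analogous two-site stability statement underlies the bound on~$b_2$.
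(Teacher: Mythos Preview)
Your proposal is essentially the paper's proof: estimate $\rho_1 := \P(0\notin A_t)$ via the extremal and stability results, then apply Stein--Chen with dependency neighbourhoods $B_{2t}(v)$, controlling the cross term via a two-site extremal lemma. Two small corrections are worth making.

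First, for the cross term $b_2$ the paper does \emph{not} prove or use that the excess over $m_t$ grows with $d(u,v)$; it only shows the uniform bound that $B_t(u)\cup B_t(v)$ must contain at least $m_t+1$ uninfected sites whenever $u\neq v$ (Lemma~\ref{le:minunion}). That single extra site, together with the same stability-based enumeration you invoke for $\rho_1$ (applied to the two-site count $h_t(k)$), already yields $\rho_2 = O(\rho_1 q_n)$, and hence $b_2 = O(n^d t^d \rho_1 q_n) = O(\lambda_t\, t^d q_n) = o(\lambda_t)$. Summing over shells with a distance-dependent excess is neither proved nor needed.

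Second, in part~(ii) the hypothesis allows $q_n n^{d/m_t}\to\infty$, so $q_n$ need not satisfy the upper bound under which the $\rho_1$-asymptotic and the Poisson approximation are established (both require $t^c q_n = o(1)$, which comes from $q_n \leq C n^{-d/m_t}$, cf.\ Lemma~\ref{le:ratiobound}). The paper closes this with a one-line monotone coupling: $\P_p(T\leq t)$ is increasing in $p$, so one may replace $q_n$ by $\min\bigl(q_n,\ (\log n)\,n^{-d/m_t}\bigr)$, which still has $q_n^{m_t} n^d\to\infty$ but now lies in the regime where the Poisson estimate applies. You should include this reduction explicitly.
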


The analogue of Theorem \ref{th:crittime} holds in the case of the \emph{modified $d$-neighbour bootstrap percolation} model. In this process, an uninfected vertex becomes infected if it has at least one infected neighbour in each direction, that is, for all~$t \geq 0$,
\[
A_{t+1} = A_t \cup \big\{v \, : \, \text{for all } i \in [d], \, \big\lvert A_t \cap \{v - e_i, v + e_i\} \big\rvert \geq 1 \big\},
\]
where $e_i$ denotes the $i$th standard basis vector in $\R^d$. Let $p_\alpha^{(m)}(t)$ be the quantity in the modified $d$-neighbour model corresponding to $p_\alpha(t)$ in the standard $d$-neighbour model, as defined in \eqref{eq:critprob}.

\begin{theorem}\label{th:crittimemod}
Let $d\geq 2$, let $t = o(\log n/\log\log n)$, let $(p_n)_{n=1}^\infty$ be a sequence of probabilities, let $\omega(n) \to \infty$, and let $T = T(\T_n^d)$.  Under the modified $d$-neighbour model,
\begin{enumerate*}
\item if, for all $n$, $q_n \leq \bigl(n^{-d}\!/\omega(n)\bigr)^{1/(2t + 1)}$, then $\P_{p_n}(T \leq t) \to 1$ as $n\to\infty$;
\item if, for all $n$, $q_n \geq \bigl(n^{-d} \omega(n)\bigr)^{1/(2t + 1)}$, then $\P_{p_n}(T \leq t) \to 0$ as $n\to\infty$.
\end{enumerate*}
Moreover, for any $\alpha \in (0, 1)$,
\[
p_{\alpha}^{(m)}(t) = 1 - \bigl(1+o(1)\bigr)\biggl(\dfrac{\log\left(\frac{1}{\alpha}\right)}{d n^d}\biggr)^{\frac{1}{2t + 1}}.
\]
\end{theorem}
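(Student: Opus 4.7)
The plan is to follow the blueprint of the proof of Theorem \ref{th:crittime}, replacing the extremal columns $C_t$ of the standard model by the configurations appropriate to the modified rule. The first task is an extremal result: under the modified $d$-neighbour rule, the minimum number of initially uninfected sites needed to keep the origin uninfected at time $t$ is exactly $2t+1$, and (up to translation) the minimum is attained precisely by the $d$ axis-aligned segments $L_i := \{-te_i, -(t-1)e_i, \ldots, te_i\}$, $i \in [d]$. The upper bound is immediate: if $L_i \subseteq \Z^d \setminus A$, a short induction yields $j e_i \notin A_{t-\abs{j}}$ for every $\abs{j} \leq t$, so $0 \notin A_t$. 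For the lower bound I would induct on $t$: if $0 \notin A_t$ then the modified rule gives a direction $i$ with $0, e_i, -e_i \notin A_{t-1}$, and applying the inductive hypothesis to these three sites while tracking the overlaps in their supporting configurations gives $2t+1$, with equality forcing a common axial support.

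Next I would prove a stability complement: configurations with $2t+1+k$ uninfected sites that still keep the origin uninfected must cluster near one of the $d$ axial lines, with only polynomially (in $t$) many possibilities for the $k$ extra sites. With extremality and stability in place, the probability that the origin is uninfected at time $t$ satisfies
\[
p_1 := \P_p(0 \notin A_t) = \bigl(1+o(1)\bigr)\,d\,q^{2t+1},
\]
the leading term coming from the $d$ extremal lines and the error controlled by stability. Working on the torus removes boundary effects, so the expected number $N_t$ of sites uninfected at time $t$ satisfies $\E N_t = n^d p_1 = (1+o(1))\,d\,n^d\,q^{2t+1}$. Since $\{u \notin A_t\}$ and $\{v \notin A_t\}$ are independent once $\norm{u-v}_1 > 2t$, and stability bounds the correlated contributions from nearer pairs, one obtains $\mathrm{Var}(N_t) = (1+o(1))\E N_t$.

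The final step is the Stein-Chen method with dependence neighbourhoods given by $\ell_1$-balls of radius $t$, which yields $\dTV\bigl(N_t, \Po(\E N_t)\bigr) \to 0$ throughout the range $t = o(\log n/\log\log n)$, together with the identity $\P_p(T \leq t) = \P_p(N_t = 0)$. Part~(i) follows because $q_n \leq (n^{-d}/\omega(n))^{1/(2t+1)}$ forces $\E N_t \to 0$ and hence $\P(N_t = 0) \to 1$; part~(ii) is the analogous statement with $\E N_t \to \infty$. The closed form for $p_\alpha^{(m)}(t)$ is obtained by setting $e^{-\E N_t} = \alpha$ and solving for $q$. The step I expect to be hardest is the quantitative stability: for the Poisson approximation to remain valid up to $t = o(\log n / \log\log n)$, the count of near-extremal configurations must be controlled tightly enough that the factor $q^{-k}$ coming from $k$ extra uninfected sites is absorbed into the error.
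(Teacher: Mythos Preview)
Your proposal is correct and follows essentially the same route as the paper: extremal bound of $2t+1$ with the $d$ axial segments as the unique minimisers, a stability estimate giving $g_t(k)=O(t^{O(k)})$, the asymptotic $\rho_1=(1+o(1))\,d\,q^{2t+1}$, and then Stein--Chen exactly as in Section~\ref{se:proofs}. Two small remarks. First, for the extremal lower bound the paper does not induct on $t$ but argues layer by layer: each sphere $S_r$ must contain at least two protected sites, because the opposing protected pair $\pm e_i$ of the origin can each be pushed outward one step at a time while keeping the $i$th coordinate strictly positive (respectively negative), so the two outward chains never meet. This is shorter than tracking overlaps among the three inductive calls you describe, and it also yields the stability lemma almost for free: a single minimal layer already pins down the entire column beneath it. Second, the dependency neighbourhood for Stein--Chen should be the $\ell_1$-ball of radius $2t$ (equivalently $2t+1$), not $t$; you state the correct independence range $\lVert u-v\rVert_1>2t$ a few lines earlier, so this is just a slip.
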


The proof of Theorem~\ref{th:crittimemod} follows the same structure as the proof of Theorem~\ref{th:crittime} but is vastly simpler. While the deduction of the Poisson convergence result from the combinatorial results is essentially the same in either case, the proofs of the combinatorial results, which form the backbone of the proof of Theorem~\ref{th:crittime}, are trivial in the case of the modified $d$-neighbour model. Therefore, the proof of Theorem~\ref{th:crittimemod}, which appears in Section \ref{se:mod}, is only sketched.

\begin{remark}
Observe that the discrete torus~$\T_n^d$ is a vertex-transitive graph, which means that the $\ell_1$ balls of radius~$t$ around different vertices are identical.  This makes the discrete torus a natural setting in which to consider the problem of percolation by time~$t$.
\end{remark}

\begin{remark}
It is important to note that fast percolation in the case of a high infection probability is very different to the last few steps of near-to-critical percolation, when the probability~$p$ is just above the critical probability for percolation. In the former case, the initial set~$A$ consists of sites which are infected independently at random with probability~$p$, which is close to~$1$, while in the latter case, if percolation occurs at time~$T$, then for small values of~$t$, the set~$A_{T-t}$ consists of sites which are far from independently infected: as shown by Aizenman and Lebowitz~\cite{AizLeb}, with high probability, $A_{T-t}$ will consist of one large rectangle covering almost the entire domain, and just a few additional sites.
\end{remark}

One of the strengths of Theorem~\ref{th:crittime} is that it allows us to deduce that if $t = o(\log n/\log\log n)$ and $q_n$ is bounded away from both $n^{-d/m_{t-1}}$~and~$n^{-d/m_{t}}$, then, with high probability, $T = t$, and otherwise, there is a two-point concentration for $T$. The following theorem is our second main theorem.

\begin{theorem}\label{th:conc}
Let $d \geq 2$, let $t = o(\log n/\log\log n)$, and let $(p_n)_{n=1}^{\infty}$ be a sequence of probabilities.  Consider the standard $d$-neighbour rule.
\begin{enumerate*}
\item Suppose that there exists $\omega(n) \to \infty$ such that
\begin{equation}\label{eq:1ptrange}
\bigl(n^{-d} \omega(n)\bigr)^{1/m_{t-1}} \leq q_n \leq \bigl(n^{-d}\!/\omega(n)\bigr)^{1/m_{t}}.
\end{equation}
Then, with high probability, $T = t$.
\item Suppose instead that
\begin{equation}\label{eq:2ptrange}
\bigl(n^{-d}\!/\omega(n)\bigr)^{1/m_t} \leq q_n \leq \bigl(n^{-d} \omega(n)\bigr)^{1/m_t}
\end{equation}
for all functions~$\omega(n) \to \infty$.  Then, with high probability, $T \in \{t, t + 1\}$.  Moreover, if there exists a constant~$c$ such that $\lim_{n \to \infty} q_n^{m_t} n^d = c$, then
\[
\P_{p_n}(T = t) \sim 1 - \P_{p_n}(T = t + 1) \sim \exp\bigl(-d^3 2^{d-1} c\bigr).
\]
\end{enumerate*}
\end{theorem}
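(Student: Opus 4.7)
The plan is to deduce both statements from Theorem~\ref{th:crittime} applied at the times $t-1$, $t$, and $t+1$, together with the underlying Poisson convergence from which that theorem is derived. For part~(i), I would simply observe that the lower bound on $q_n$ in~\eqref{eq:1ptrange} is exactly hypothesis~(ii) of Theorem~\ref{th:crittime} with $t$ replaced by $t-1$, giving $\P_{p_n}(T \leq t-1) \to 0$, while the upper bound on $q_n$ is hypothesis~(i) of Theorem~\ref{th:crittime} at time~$t$, giving $\P_{p_n}(T \leq t) \to 1$. Together these force $T = t$ with high probability. The growth condition $t = o(\log n/\log\log n)$ is preserved when $t$ is replaced by $t-1$, so Theorem~\ref{th:crittime} applies.

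For the qualitative half of part~(ii), the same strategy applies once the range~\eqref{eq:2ptrange} is translated into the threshold ranges at times $t-1$ and $t+1$. The hypothesis that the inequality in~\eqref{eq:2ptrange} holds for every $\omega \to \infty$ is equivalent to $q_n^{m_t} n^d = \Theta(1)$, and in particular $q_n \to 0$. Since the sequence $m_{t,d}$ is strictly increasing in $t$, with $m_{t-1} < m_t < m_{t+1}$, this forces $q_n^{m_{t-1}} n^d \to \infty$ (so Theorem~\ref{th:crittime}(ii) at time $t-1$ gives $\P_{p_n}(T \leq t-1) \to 0$) and $q_n^{m_{t+1}} n^d \to 0$ (so Theorem~\ref{th:crittime}(i) at time $t+1$ gives $\P_{p_n}(T \leq t+1) \to 1$). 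Consequently $T \in \{t, t+1\}$ with high probability.

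For the quantitative half, suppose further that $q_n^{m_t} n^d \to c$. The formula for $p_\alpha(t)$ in Theorem~\ref{th:crittime} rests on showing that the number of sites uninfected at time~$t$ is asymptotically Poisson with mean $\sim d^3 2^{d-1} q_n^{m_t} n^d$; reading off this Poisson limit (equivalently, inverting the displayed formula for $p_\alpha(t)$ and using monotonicity of $p \mapsto \P_p(T \leq t)$) yields $\P_{p_n}(T \leq t) \to \exp(-d^3 2^{d-1} c)$. Combined with $\P_{p_n}(T \leq t-1) \to 0$ from the qualitative argument, we obtain
\[
\P_{p_n}(T = t) = \P_{p_n}(T \leq t) - \P_{p_n}(T \leq t-1) \to \exp(-d^3 2^{d-1} c),
\]
and then $\P_{p_n}(T = t+1) \to 1 - \exp(-d^3 2^{d-1} c)$ by the qualitative part.

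The main subtlety lies in the quantitative statement: Theorem~\ref{th:crittime} as displayed describes the location of $p_\alpha(t)$, whereas here I need to convert that into a precise limit of $\P_{p_n}(T \leq t)$ along a prescribed sequence $(p_n)$. Strictly speaking this requires appealing to the Poisson convergence behind Theorem~\ref{th:crittime}, not merely to the asymptotic formula for $p_\alpha(t)$; once that is granted, everything else is routine bookkeeping with the strict monotonicity of $(m_{t,d})_t$ and the fact that $q_n \to 0$ in the regime under consideration.
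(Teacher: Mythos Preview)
Your proposal is correct and follows essentially the same approach as the paper's own proof: both parts are deduced from Theorem~\ref{th:crittime} applied at neighbouring times, and for the quantitative statement the paper appeals directly to the Poisson convergence (Theorem~\ref{th:poisson}) to get $\P_{p_n}(T\leq t)\sim e^{-\lambda_n}\sim\exp(-d^3 2^{d-1}c)$, exactly as you anticipate in your final paragraph. Your observation that the displayed formula for $p_\alpha(t)$ alone is not quite enough, and that one must invoke the underlying Poisson limit, is spot on.
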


Again, we have a corresponding result for the modified $d$-neighbour rule.

\begin{theorem}\label{th:concmod}
Let $d \geq 2$, let $t = o(\log n/\log\log n)$, and let $(p_n)_{n=1}^{\infty}$ be a sequence of probabilities.  Consider the modified $d$-neighbour rule.
\begin{enumerate*}
\item Suppose that there exists $\omega(n) \to \infty$ such that
\[
\bigl(n^{-d} \omega(n)\bigr)^{1/(2t - 1)} \leq q_n \leq \bigl(n^{-d}\!/\omega(n)\bigr)^{1/(2t + 1)}.
\]
Then, with high probability, $T = t$.
\item Suppose instead that
\[
\bigl(n^{-d}\!/\omega(n)\bigr)^{1/(2t + 1)} \leq q_n \leq \bigl(n^{-d} \omega(n)\bigr)^{1/(2t + 1)}
\]
for all functions~$\omega(n) \to \infty$.  Then, with high probability, $T \in \{t, t + 1\}$.  Moreover, if there exists a constant~$c$ such that $\lim_{n \to \infty} q_n^{2t + 1} n^d = c$, then
\[
\P_{p_n}(T = t) \sim 1 - \P_{p_n}(T = t + 1) \sim \exp(-d c).
\]
\end{enumerate*}
\end{theorem}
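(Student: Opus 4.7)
The plan is to deduce Theorem~\ref{th:concmod} directly from Theorem~\ref{th:crittimemod} by applying the latter at the three consecutive times $t-1$, $t$, and $t+1$, and then translating the hypotheses on $q_n$ into statements about $\P_{p_n}(T \leq s)$ for $s \in \{t-1, t, t+1\}$. The crucial structural feature is that the exponents $1/(2t-1)$, $1/(2t+1)$, $1/(2t+3)$ appearing in the three applications are sufficiently separated for the polynomial factor $n^{-d}$ to dominate any sub-polynomial factor $\omega(n)^{\pm 1}$; this is exactly what the hypothesis $t = o(\log n/\log\log n)$ guarantees.

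For part~(i), the upper bound $q_n \leq \bigl(n^{-d}/\omega(n)\bigr)^{1/(2t+1)}$ is precisely the hypothesis of Theorem~\ref{th:crittimemod}(i) at time~$t$, yielding $\P_{p_n}(T \leq t) \to 1$; and the lower bound $q_n \geq \bigl(n^{-d}\omega(n)\bigr)^{1/(2t-1)}$ is the hypothesis of Theorem~\ref{th:crittimemod}(ii) at time~$t-1$, yielding $\P_{p_n}(T \leq t-1) \to 0$. Subtracting gives $\P_{p_n}(T = t) \to 1$.

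For part~(ii), I first sandwich $T \in \{t, t+1\}$. From the lower endpoint $q_n \geq \bigl(n^{-d}/\omega(n)\bigr)^{1/(2t+1)}$ I must produce $\omega'(n) \to \infty$ with $q_n \geq \bigl(n^{-d} \omega'(n)\bigr)^{1/(2t-1)}$; raising both inequalities to the $(2t-1)(2t+1)$-th power, this reduces to $\omega'(n)^{2t+1} \omega(n)^{2t-1} \leq n^{2d}$, which admits some $\omega'(n) \to \infty$ with room to spare whenever $t = o(\log n/\log\log n)$. The symmetric check at time $t+1$ is analogous. For the precise asymptotic under $q_n^{2t+1} n^d \to c$, I invoke the \emph{Moreover} clause of Theorem~\ref{th:crittimemod}: inverting
\[
p_\alpha^{(m)}(t) = 1 - \bigl(1+o(1)\bigr)\biggl(\dfrac{\log\left(\frac{1}{\alpha}\right)}{dn^d}\biggr)^{\frac{1}{2t+1}}
\]
shows that $\alpha_n := \P_{p_n}(T \leq t)$ satisfies $d q_n^{2t+1} n^d = (1+o(1))\log(1/\alpha_n)$, and hence $\alpha_n \to \exp(-dc)$. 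Combined with $\P_{p_n}(T \leq t-1) \to 0$ and $\P_{p_n}(T \leq t+1) \to 1$ from the sandwich, this yields $\P_{p_n}(T = t) \to \exp(-dc)$ and $\P_{p_n}(T = t+1) \to 1 - \exp(-dc)$.

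No substantive obstacle arises once Theorem~\ref{th:crittimemod} is in hand; the whole argument is a bookkeeping exercise. The one delicate point is the exponent comparison in part~(ii), where the slack between $1/(2t \pm 1)$ and $1/(2t+1)$ is what accommodates the slowly growing $\omega(n)$ and $\omega'(n)$, and that slack is present exactly because $t$ grows more slowly than $\log n/\log\log n$.
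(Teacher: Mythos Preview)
Your approach is essentially the paper's: the paper omits the proof of Theorem~\ref{th:concmod}, deferring to the proof of Theorem~\ref{th:conc}, which likewise deduces the concentration by applying the threshold theorem at times $t-1$, $t$, and $t+1$, using the gap between consecutive exponents to absorb the $\omega(n)$ factors. The only cosmetic difference is in the precise asymptotic: the paper reads off $\P_{p_n}(T\le t)\sim e^{-\mu_n}$ directly from the Poisson convergence (with $\mu_n=(1+o(1))\,d\,n^d q_n^{2t+1}\to dc$), whereas you invert the $p_\alpha^{(m)}(t)$ formula --- which is the same computation in reverse, though as stated that formula is for fixed $\alpha$, so one either appeals to monotonicity in $\alpha$ or, more cleanly, goes back to the Poisson approximation as the paper does.
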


Once again, the proof of Theorem~\ref{th:concmod} is very similar to that of Theorem~\ref{th:conc}, so we shall omit it.

The rest of the paper is organized as follows. In Section~\ref{se:probtools}, we recall important terminology from probability theory and introduce the Stein-Chen method for proving convergence in distribution to a Poisson random variable. In Section~\ref{se:extremal}, we study the extremal questions connected with a vertex being uninfected at time~$t$. These fall into two categories. First, in Section~\ref{se:exact}, we answer completely the exact questions: what is the minimum number of uninfected sites needed to ensure a given site is uninfected at time~$t$, and what are the minimal configurations? Second, in Section~\ref{se:stability}, we look at the inexact questions: what can we say about the number and type of configurations when the number of uninfected sites is not much more than minimum number? We show that the set of uninfected sites must still be quite close to a column. In Section~\ref{se:proofs}, we put together the probabilistic tools from Section~\ref{se:probtools} and the extremal results from Section~\ref{se:extremal} to prove Theorems \ref{th:crittime} and~\ref{th:conc}. In Section~\ref{se:mod}, we sketch the proof of Theorem~\ref{th:crittimemod}.  Finally, in Section~\ref{se:open}, we discuss possible generalizations and conjectures.

\section{The Stein-Chen method}\label{se:probtools}

In this section, we recall the tools and techniques from probability theory that we need in the proof of Theorem~\ref{th:crittime}.

For a random variable $X$, we write $X\sim\Po(\lambda)$ to indicate that $X$ has Poisson distribution with mean~$\lambda$.

Let $P$ and $Q$ be probability distributions with support on $\Z$. The \emph{total variation distance} of $P$~and~$Q$ is
\[
\dTV(P,Q) = \sup_{A\subset\Z} \bigl\lvert \P (X\in A) - \P (Y\in A) \bigr\rvert.
\]
If $X$ and $Y$ are random variables with distributions $P$ and~$Q$ respectively, then with a slight abuse of notation we write $\dTV(X,Y)$ for $\dTV(P,Q)$. Let $(X_n)_{n=1}^{\infty}$, $(Y_n)_{n=1}^{\infty}$ be sequences of integer-valued random variables. We say that the sequences $(X_n)_{n=1}^{\infty}$ and~$(Y_n)_{n=1}^{\infty}$ \emph{converge in distribution} if $\lim_{n\rightarrow \infty} \dTV (X_n, Y_n) = 0$.

In order to prove Theorem~\ref{th:crittime} we need to show that a certain sequence of random variables converges to the Poisson distribution. Classically, to prove convergence in distribution one had to use the method of moments, which relied on knowing all of the moments of the distributions for which one was trying to prove convergence. In practice, however, finding higher order moments is often extremely difficult to do. The solution is the Stein-Chen method for proving convergence in distribution, introduced by Stein~\cite{Stein} for use with the normal distribution, and later modified by Chen~\cite{Chen} for use with the Poisson distribution. The power of the Stein-Chen method is that it only relies on knowing the first two moments of the distributions.

The version of the Stein-Chen method that we shall use is the following theorem of Barbour and Eagleson~\cite{BarEag}, which concerns a sum of Bernoulli random variables, each of which is dependent on only a small number of the other random variables.

\begin{theorem}\label{th:steinchen}
Let $X_1,\dots,X_n$ be Bernoulli random variables with $\P (X_i = 1) = p_i.$ Let $Y_n = \sum_{i=1}^{n}{X_i}$, and let $\lambda_n = \mathbb{E}Y_n = \sum_{i=1}^n p_i.$ For each $i\in [n],$ let $N_i \subset [n]$ be such that $X_i$ is independent of~$\{X_j:j\notin N_i\}$. For each $i,j \in [n],$ let $p_{ij} = \mathbb{E}X_i X_j$. Let $Z_n \sim \Po(\lambda_n)$. Then 
\[
\dTV(Y_n, Z_n) \leq \min\left\{1,\lambda_n^{-1}\right\} \Bigg( \sum_{i=1}^n \sum_{j\in N_i} p_i p_j + \sum_{i=1}^n \sum_{j\in N_i\setminus\{i\}} p_{ij} \Bigg).
\]
\end{theorem}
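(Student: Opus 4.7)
The plan is to apply Stein's method for Poisson approximation. For a bounded test function $h : \N \to \R$, introduce the Stein equation for $\Po(\lambda_n)$:
\[
\lambda_n f(k+1) - k f(k) = h(k) - \E h(Z_n), \qquad k \geq 0,
\]
which admits a unique bounded solution $f = f_h$ (normalized by $f(0) = 0$). Evaluating at $Y_n$ and taking expectations yields the identity
\[
\E h(Y_n) - \E h(Z_n) = \E\bigl[ \lambda_n f_h(Y_n + 1) - Y_n f_h(Y_n) \bigr],
\]
so it suffices to bound the right-hand side uniformly over indicators $h = \mathbf{1}_A$ and then take $\sup_A$.

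The first ingredient is the sharp first-difference estimate $\|\Delta f_h\|_\infty \leq \min\{1, \lambda_n^{-1}\}$ for $h = \mathbf{1}_A$, where $\Delta f(k) = f(k+1) - f(k)$. Solving the Stein recursion explicitly writes $f(k+1)$ in terms of $\P(Z_n \leq k)$ and $\P(Z_n > k)$; a monotone coupling (or direct inspection of the resulting expression) then gives the uniform bound $\min\{1, \lambda_n^{-1}\}$, which supplies the prefactor in the final inequality.

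The second ingredient is a decomposition argument that turns local dependence into the explicit double sums in the statement. For each $i$, write
\[
Y_n = X_i + V_i + U_i, \qquad V_i = \sum_{j \in N_i \setminus \{i\}} X_j, \qquad U_i = \sum_{j \notin N_i} X_j,
\]
so that $U_i$ is independent of $(X_i, V_i)$ by the defining property of $N_i$. Starting from
\[
\lambda_n f(Y_n + 1) - Y_n f(Y_n) = \sum_{i=1}^n \bigl( p_i f(Y_n + 1) - X_i f(Y_n) \bigr),
\]
I would insert the decoupled quantity $p_i f(U_i + 1)$ into each summand. For the first discrepancy, $|f(k+m) - f(k)| \leq m \|\Delta f\|_\infty$ together with $Y_n + 1 - (U_i + 1) = X_i + V_i$ gives $|p_i \E[f(Y_n+1) - f(U_i+1)]| \leq \|\Delta f\|_\infty \cdot p_i\bigl(p_i + \sum_{j \in N_i \setminus \{i\}} p_j\bigr)$. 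For the second, note the key identity $X_i f(Y_n) = X_i f(1 + V_i + U_i)$, since the prefactor $X_i$ kills the $X_i = 0$ case; independence of $U_i$ from $X_i$ then cancels $p_i \E[f(U_i + 1)]$ against $\E[X_i f(U_i + 1)]$, and what remains is $\E[X_i(f(U_i + 1) - f(1 + V_i + U_i))]$, bounded by $\|\Delta f\|_\infty \cdot \E[X_i V_i] = \|\Delta f\|_\infty \sum_{j \in N_i \setminus \{i\}} p_{ij}$.

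Summing over $i$, and using $i \in N_i$ (which must hold since $X_i$ trivially depends on itself) to absorb $\sum_i p_i^2$ into $\sum_i \sum_{j \in N_i} p_i p_j$, the two contributions combine into exactly the bracketed expression in the statement. Multiplying by $\|\Delta f\|_\infty \leq \min\{1, \lambda_n^{-1}\}$ and taking the supremum over $A$ yields the claimed total variation bound. The main obstacle is the first step: the sharp constant $\min\{1, \lambda_n^{-1}\}$ in the first-difference bound requires careful analysis of the Poisson tail probabilities appearing in the explicit solution of the Stein equation, whereas the decomposition and telescoping in the second step are essentially routine once the decoupled quantity $p_i f(U_i + 1)$ has been identified as the right comparison object.
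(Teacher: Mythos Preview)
The paper does not prove this theorem at all: it is quoted as a known result of Barbour and Eagleson~\cite{BarEag} and used as a black box in Section~\ref{se:proofs}. There is therefore no ``paper's own proof'' to compare against.

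That said, your outline is a correct sketch of the standard Stein--Chen argument for local dependence. One small overstatement: you write that $U_i$ is independent of the pair $(X_i, V_i)$, but the hypothesis only guarantees that $X_i$ is independent of $\{X_j : j \notin N_i\}$, i.e.\ of $U_i$. Your argument, however, never actually uses the stronger joint independence: the only place independence enters is in the cancellation $\E[X_i f(U_i+1)] = p_i\,\E[f(U_i+1)]$, which needs only $X_i \perp U_i$. So the slip is harmless. The remaining steps --- the telescoping via $p_i f(U_i+1)$, the first-difference bound $\lVert \Delta f_h\rVert_\infty \leq \min\{1,\lambda_n^{-1}\}$, and the absorption of $\sum_i p_i^2$ via $i\in N_i$ --- are exactly the standard route to the stated inequality.
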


For further developments and applications of the Stein-Chen method, see, e.g.,~\cite{AGG,AGW,BCL,BHrate}, as well as~\cite{PoissonApprox} and the references therein.

\section{Extremal results}\label{se:extremal}

The aim of this section is to prove the combinatorial results needed in the proof of Theorem~\ref{th:crittime}. These results are all related to the event that a given site is uninfected at time~$t$.

We shall need a notion of distance between vertices. The appropriate distance for us is the $\ell_1$ distance, or graph distance, but, unfortunately, the $\ell_1$ norm is not a norm on $\Z^d$, nor (still less) is it a norm on $\T_n^d$. However, abusing notation slightly, we shall still write $\norm{x}$ for the length of the shortest path from the origin to~$x$, in $\Z^d$ or $\T_n^d$ as appropriate.

We define the $(d - 1)$-dimensional \emph{sphere} or \emph{layer} of radius~$t$ about a vertex~$x$ to be $S^{d-1}_t(x) := \{y \in \Z^d : \norm{y-x} = t\}$ and the $d$-dimensional \emph{ball} of radius~$t$ about $x$ to be $B^d_t(x) := \{y \in \Z^d : \norm{y-x} \leq t\}$. For short, we write $B_t := B_t^d(0)$ and $S_t := S_t^{d-1}(0)$. Recall that for each $i \in [d]$, $e_i$ denotes the $i$th standard basis vector of~$\R^d$. Given a vertex~$x$, we write $x_i$ for the $i$th coordinate of~$x$ relative to the standard basis vectors.  Thus, we have $x=\sum_{i=1}^d x_ie_i$.

We shall define a partial order~$\leq$ on $B_t$ by saying that $y \geq x$ if and only if for all~$i \in [d]$ such that $x_i \neq 0$, $y_i$ has the same sign as~$x_i$ and $\abs{y_i} \geq \abs{x_i}$. This gives us natural definitions of in- and out-neighbours: we say that $y$ is an \emph{in-neighbour} of~$x$ if $xy \in E(\T_n^d)$ and $y\leq x$, and similarly that $z$ is an \emph{out-neighbour} of~$x$ if $xz \in E(\T_n^d)$ and $z\geq x$.  If $y \geq x$, we shall sometimes say that $y$ is \emph{above} $x$.

Often, we shall need to talk about vertices that are uninfected at the last time that it could be important that they are uninfected. For a vertex~$x \in B_t$, this time is~$t-\norm{x}$; after this, the state of~$x$ cannot affect the state of the origin at time~$t$.  So, we say that a vertex~$x$ is \emph{protected} if it is uninfected at time~$t-\norm{x}$. We write $P(X)$ for the set of protected sites in a subset~$X$ of~$B_t$, and $P_k^+(x)$ for the set of protected sites~$y$ such that $y\geq x$ and $\norm{y - x} = k$ (it follows that we also have $\norm{y}=\norm{x}+k$). Note that an element of~$S_t$ is protected if and only if it is initially uninfected. Our original extremal question asked what one can say about the initial set~$A$ if the origin is protected.

\subsection{Minimal configurations}\label{se:exact}

Now we shall prove our main extremal result, the bound on the number of protected sites at a given distance from another protected site, which we may take to be the origin.  Given that the origin is protected, how might we go about proving that there are many protected sites? In two dimensions, it is relatively easy to check that the spheres~$S_k$ act independently, meaning that if some sphere~$S_k$ (with $k\leq t$) has too few uninfected sites, then the sites on that sphere alone will infect the origin by time~$t$ (in fact at time exactly~$k$). (In two dimensions, this minimum number is~4 for all~$k\geq 2$.) The spheres are likewise independent in $d \geq 2$ dimensions, and the proof of the result in $d$ dimensions makes key use of this independence of spheres. We show that the number of protected sites in $B_t$ is at least~$m_t$ by showing the stronger result that the number of protected sites in $S_k$ is at least a certain quantity for every~$k\leq t$.

How can we show that there must be many protected sites in $S_k$? Certainly, there must be at least~$d+1$ protected sites at distance~$1$ from the origin, otherwise the origin would not be protected. We would then like to say that because these $d+1$ sites are protected, there must be at least a certain number of protected sites at distance~$2$ from the origin. However, the sets of sites at distance~2 that protect these $d + 1$ sites at distance~1 could overlap. Thus, we would like an inductive argument which says that if a site~$x$ is protected then there must be many protected sites~$y$ at any given distance from $x$, all satisfying $y\geq x$. In other words, we would like a statement of the form, `if $x$ is protected, then $\abs{P_k^+(x)}\geq f_k(x)$', for some function~$f_k(x)$. What should $f_k(x)$ be? Clearly, its value should depend on the support of~$x$ (that is, the number of non-zero coordinates of~$x$): we are looking for protected vertices~$y$ such that $y \geq x$, and if $x$ has large support, then there are few such vertices, while if $x$ has small support, then there are many such vertices.

We take our cue from the column example,~\eqref{eq:column}, which we hope to prove is essentially the only minimal configuration.  Define
\begin{equation}\label{eq:minlayer}
\ell_t := \ell_{t,d} := \sum_{i = 0}^t \dbinom{d}{i}.
\end{equation}
Note that, by~\eqref{eq:minball}, we may write
\[
m_{t,d} = \sum_{r = 0}^t \ell_{r,d}.
\]
With this definition,~\eqref{eq:column} gives
\[
f_k(x) = \sum_{i=0}^k \dbinom{a}{i} = \ell_{k,a},
\]
where $a$ is the number of zero coordinates of~$x$. With that in mind, the following is our main lemma.  (We shall prove this result after further discussion.)

\begin{lemma}\label{le:keyweak}
Let $t\in\N$ and $d\geq 2$. Suppose that $x\in B_t$ is protected, let $k\leq t-\norm{x}$, and let $a$ be the cardinality of~$\{j \in [d] : x_j=0\}$. Then
\[
\abs{P_k^+(x)} \geq \sum_{i=0}^k \dbinom{a}{i}.
\]
\end{lemma}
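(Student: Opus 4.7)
The proof is by induction on $k$. The base case $k = 0$ is immediate since $P_0^+(x) = \{x\}$ and $\binom{a}{0} = 1$.

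The first key observation, which also gives the case $k = 1$, is that $x$ being protected forces $x$ to have at least $d + 1$ uninfected neighbours at time $t - \norm{x} - 1$: otherwise it would become infected at time $t - \norm{x}$ under the $d$-neighbour rule. Of the $2d$ neighbours of $x$, exactly $d - a$ are in-neighbours (one for each non-zero coordinate), so at least $(d + 1) - (d - a) = a + 1$ out-neighbours $y$ of $x$ are uninfected at time $t - \norm{x} - 1 = t - \norm{y}$, and are therefore protected. Thus $\abs{P_1^+(x)} \geq a + 1 = \binom{a}{0} + \binom{a}{1}$.

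For the inductive step with $k \geq 2$, I would proceed by a slice decomposition along a chosen zero coordinate $i^*$ of $x$ (the easier case $a = 0$ follows directly by applying the inductive hypothesis to the single guaranteed protected out-neighbour). Setting
\[
A_0 = P_k^+(x) \cap \{z_{i^*} = 0\}, \qquad A_{\pm} = P_k^+(x) \cap \{\pm z_{i^*} > 0\},
\]
one has $P_k^+(x) = A_0 \sqcup A_+ \sqcup A_-$. The Pascal-type identity $\sum_{i=0}^k \binom{a}{i} = \sum_{i=0}^k \binom{a-1}{i} + \sum_{i=0}^{k-1} \binom{a-1}{i}$ then suggests matching $\abs{A_{\pm}}$ to the second summand and $\abs{A_0}$ to the first. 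The bound on $\abs{A_{\pm}}$ is immediate from the inductive hypothesis applied to $x \pm e_{i^*}$ (which has $a - 1$ zero coordinates) whenever this vertex is protected; the bound on $\abs{A_0}$ is harder and is the core of the argument.

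The main obstacle is the bound on $\abs{A_0}$: these are the protected sites of $P_k^+(x)$ lying in the hyperplane $L = \{z_{i^*} = 0\}$. I would argue via a joint induction on the dimension $d$. Setting $\alpha_{\pm} = 1$ if $x \pm e_{i^*}$ is protected and $0$ otherwise, the $\geq a + 1$ protected out-neighbours of $x$ in $\T_n^d$ include at least $a + 1 - (\alpha_+ + \alpha_-)$ lying in $L$. The delicate point is that the $d$-neighbour rule on $\T_n^d$ does not restrict to a $(d-1)$-neighbour rule on $L$, so a clean dimensional reduction is unavailable; one must track the surplus of protected out-neighbours in $L$ case-by-case on $\alpha_+ + \alpha_- \in \{0, 1, 2\}$ to compensate for the missing $\abs{A_{\pm}}$ contributions when $\alpha_+ + \alpha_- < 2$, matching the three required lower bounds $\abs{A_0} \geq \binom{a-1}{k}$, $\sum_{i=0}^k \binom{a-1}{i}$, and $\sum_{i=0}^k \binom{a}{i}$ for $\alpha_+ + \alpha_- = 2$, $1$, and $0$ respectively. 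The hardest sub-case is $\alpha_+ + \alpha_- = 0$, where the extra protected out-neighbours in $L$ forced by the $\geq a + 1$ count must be leveraged inductively to yield the full bound $\sum_{i=0}^k \binom{a}{i}$ directly on $\abs{A_0}$.
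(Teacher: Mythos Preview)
Your overall inductive shape is right, and your case $\alpha_+ + \alpha_- = 2$ matches the paper's ``Case 1''. But the proposal has a genuine gap: you have no concrete mechanism for bounding $|A_0|$. You correctly diagnose that dimensional reduction fails (the $d$-neighbour rule does not restrict to a $(d-1)$-neighbour rule on the hyperplane $L$), but ``track the surplus of protected out-neighbours case-by-case'' and ``must be leveraged inductively'' are not methods. In particular, in your sub-cases $\alpha_+ + \alpha_- \in \{0,1\}$ the sets $P_{k-1}^+(\cdot)$ above different protected out-neighbours in $L$ can overlap heavily, so simply having many such out-neighbours does not yield the required lower bounds $\sum_{i=0}^k \binom{a-1}{i}$ or $\sum_{i=0}^k \binom{a}{i}$ on $|A_0|$.

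The paper supplies the missing idea: a direct double-counting argument inside the hyperplane, with no appeal to the outer induction. For each $j$, form the bipartite graph between $P_{j-1}^0(x)$ and $P_j^0(x)$ (protected vertices in $L$ at distances $j-1$ and $j$ above $x$). Every vertex of $P_j^0(x)$ has at most $j$ neighbours in $P_{j-1}^0(x)$, while every protected $y \in P_{j-1}^0(x)$ has at least $a-j$ protected out-neighbours in $P_j^0(x)$ (this is where ``$d$-neighbour rule in $d-1$ dimensions'' is handled: one computes the $C$-degree of $y$ in the hyperplane and subtracts the at-most-$(d-1)$ unprotected neighbours). Double counting gives $|P_j^0(x)| \geq \frac{a-j}{j}|P_{j-1}^0(x)|$, hence $|P_j^0(x)| \geq \binom{a-1}{j}$. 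This is exactly the $|A_0| \geq \binom{a-1}{k}$ you need in the case $\alpha_+ + \alpha_- = 2$. The paper also organises the remaining case differently from you: rather than fixing a free direction $i^*$ and confronting $\alpha_+ + \alpha_- \in \{0,1\}$, it observes that if there is no opposing pair in \emph{any} free direction then some protected out-neighbour $x' = x + e_i$ lies in a \emph{constrained} direction, so $x'$ still has $a$ zero coordinates; induction on $a+k$ gives $\sum_{i=0}^{k-1}\binom{a}{i}$ above $x'$, and a second double-counting (now in the hyperplane $\{z_i = x_i\}$) supplies the missing $\binom{a}{k}$. This sidesteps your hardest sub-case entirely.
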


In particular, this means that if the origin is protected, then
\[
\abs{P_t^+(0)} \geq \sum_{i=0}^t \dbinom{d}{i} = \ell_t.
\]

Before proving Lemma~\ref{le:keyweak}, let us look at an example. Suppose that $x=(t-k,0,\dots,0)$ is protected. Suppose also that we are in the fortunate position that $(t-i,0,\dots,0)$ is protected for each $i=0,1,\dots, k-1$. For a fixed $i$, we could then ask, given that $(t-i,0,\dots,0)$ is protected, how many protected sites~$y$ must there be in $S_t$ such that $y_1=t-i$? If we could get a good bound on this number then we would be in good shape: the condition $y_1=t-i$ ensures that these sets of protected sites are disjoint for different values of~$i$, so we could bound from below the number of protected sites in $S_t$ by summing the sizes of these sets. However, it is not clear that the minimum number is greater than~zero. In fact, if $i\geq d$ then the minimum number \emph{is}~zero. What about smaller values of~$i$? When $i=0$, the minimum is~$1$ (the site itself), and when $i=1$, the minimum is~$d-1$. Given the form of~$\ell_t$ in~\eqref{eq:minlayer}, it is tempting to view these numbers as $\binom{d-1}{0}$ and $\binom{d-1}{1}$ respectively, and to conjecture that the minimum for $i$ is~$\binom{d-1}{i}$.

Let us pause to see what this means. We are saying that if a site
is protected under $d$-neighbour bootstrap percolation in the $(d-1)$-dimensional space~$\Z^{d-1}$, then there must be at least~$\binom{d-1}{i}$ protected sites at distance~$i$, for each $i\leq d-1$. This assertion is strange, because we would not normally consider $r$-neighbour bootstrap percolation in a $d$-dimensional space for values of~$r$ greater than~$d$. However, it turns out that the assertion is true, and that it can be proved by a double counting argument. Thus, in the very special case in which $(t-i,0,\dots,0)$ is protected for each $0\leq i\leq k$, Lemma~\ref{le:keyweak} holds.

What happens if, for some~$i$, the site~$(t-i,0,\dots,0)$ is not protected? In that case, we must have two protected sites of the form~$(t-i-1,1,0,\dots,0)$ and~$(t-i-1,-1,0,\dots,0)$ (without loss of generality). If so, the sets of protected sites on $S_t$ that each of these sites generate (we presume by induction) will be disjoint: those generated by $(t-i-1,1,0,\dots,0)$ will have second coordinate at least~1, while those generated by $(t-i-1,-1,0,\dots,0)$ will have second coordinate at most~$-1$.  So we obtain two large, disjoint sets of protected sites on $S_t$.  Unfortunately, the sum of the sizes of these two sets is not quite large enough to give the bound in the lemma. However, we have not yet looked for any protected sites~$y$ with $y_2=0$. But this situation is now very similar to the previous case: we are asking how many sites~$y \in S_t$ with $y_2=0$ are needed to protect $(t-i-1,0,\dots,0)$. In other words, we are back to $d$-neighbour bootstrap percolation in a $(d-1)$-dimensional space, and the same double counting argument applies. We shall see that this gives us exactly the right number of additional protected sites to prove the lemma.

In general, the site~$x$ in question is not of the form~$(t-k,0,\dots,0)$, but the two-case argument above still applies. Either $x$ has a protected neighbour~$y$ such that $y\geq x$ and $y$ has the same number of zero coordinates as $x$, or $x$ has a pair of protected neighbours $x+e_j$ and~$x-e_j$ for some $j$ such that $x_j=0$. In both cases we obtain large sets of protected sites on $S_t$ by induction on $a + k$, where once again $a$ denotes the number of zero coordinates of~$x$ and $k \leq t - \norm{x}.$

This concludes the sketch of the proof of Lemma~\ref{le:keyweak}.  Using Lemma~\ref{le:keyweak}, we can determine the minimum number of uninfected vertices that are needed to protect the origin (Corollary~\ref{MinProtected}), as well as classify the extremal sets (Theorem~\ref{th:configs}).  However, when, in Section~\ref{se:stability}, we come to prove Theorem~\ref{th:stability} (the stability result), it will turn out that we require a slightly stronger statement than Lemma~\ref{le:keyweak}.  The proof of this result follows along the same lines as the argument described above.  So, rather than write out both proofs, we shall simply prove the stronger result.

Roughly speaking, in the proof of Theorem~\ref{th:stability}, we shall need to be able to be more specific about where we are looking for protected vertices. Given a protected site~$x$, we shall want not just to be able to say how many protected sites~$y$ there are above $x$, but also how many  protected sites~$y$ there are above $x$ with certain other restrictions on their coordinates. More specifically, we partition $[d]$ into three sets: $\pos$, the set of `positive directions'; $\NN$, the set of `negative directions'; and $\free$, the set of `free directions'. We then ask, given that $x$ is protected, how many protected vertices~$y$ must there be such that $\norm{y-x}=k$ and
\begin{enumerate*}
\item $y_i\geq x_i$ for $i\in\pos$;
\item $y_i\leq x_i$ for $i\in\NN$.
\end{enumerate*}
(For $i\in\free$ there is no restriction on $y_i$.) Lemma~\ref{le:keyweak} is the special case $\pos = \{i:x_i>0\}$, $\NN = \{i:x_i<0\}$, and $\free=[d]\setminus(\pos\cup\NN)$.

Once again, the proof of the stronger version of Lemma~\ref{le:keyweak} requires no new ideas: one should think of it as being what is really proved when one proves Lemma~\ref{le:keyweak}.

In preparation for the statement of the stronger result, let us formalize the definitions from the discussion above. We define a \emph{configuration} to be a function~$C : [d] \to \{-1, 0, 1\}$. If $C(i) = 0$, we say that $i$ is \emph{free} for $C$, and if $C(i) = 1$, we say that $i$ is \emph{positively constrained} for $C$. If $C(i) = -1$, we say that $i$ is \emph{negatively constrained} for $C$.  We define $\free(C)$, $\pos(C)$, and $\NN(C)$ to be, respectively, the set of free, positively constrained, and negatively constrained directions for~$C$. Thus, the sets $\free(C)$,~$\pos(C)$, and~$\NN(C)$ partition the set~$[d]$.

The configuration~$C$ determines where we can look for protected vertices on $S_t$. We say that $y$ is \emph{$C$-compatible} with~$x$ if $(y_i - x_i) C(i) \geq 0$ for all~$i \in [d]$. Let
\[
P_k^C(x) := \left\{y\in P(B_t) \, : \, \text{$y$ is $C$-compatible with $x$ and} \left\lVert y - x \right\rVert = k\right\}.
\]
For example, the set~$P_k^+(x)$ defined at the beginning of this section corresponds to~$P_k^C(x)$, where $C$ is the configuration defined by
\begin{equation*}
C(i) =
	\begin{cases}
	1, & x_i > 0, \\
	-1, & x_i < 0, \\
	0, & x_i = 0.
	\end{cases}
\end{equation*}
For an arbitrary confiuration $C$, we define the \emph{$C$-degree} of~$x$ to be the number of $C$-compatible neighbours of~$x$.  That is, $d^C(x) = \abs{P_1^C(x)}$.

We need a few more definitions relating to configurations. Let $\mathcal{C}_d$ denote the set of configurations. We define a partial order~$\leq$ on $\mathcal{C}_d$ such that $C \leq C'$ if for all~$i \in [d]$, either $C(i) = C'(i)$; or $C(i) \in \{-1, 1\}$ and $C'(i) = 0$. The unique maximal element of~$\mathcal{C}_d$ with respect to~$\leq$ is~$(0, \ldots, 0)$.  Note that if $C \leq C'$, then
\[
P_k^{C}(x) \subset P_k^{C'}(x).
\]

We say that two configurations $C$,~$C'$ are \emph{polar} if there exists $j \in [d]$ such that $C(j) C'(j) = -1$, and $C(k) = C'(k)$ for all~$k\neq j$. If $C$ and $C'$ are polar and $j \in [d]$ is the coordinate in which they differ, we say that $C$ is the \emph{$j$-polar opposite} of~$C'$, and conversely. We define the \emph{common parent} of polar configurations $C$ and~$C'$ to be the minimal configuration~$C''$ (with respect to~$\leq$) such that $C'' \geq C$ and $C'' \geq C'$. Because $C$ and $C'$ differ only in one coordinate, there is a unique configuration~$C''$ with the desired property.  Note that if $C$ is the $j$-polar opposite of~$C'$ and $C''$ is their common parent, then $C''(j)=0$.

If $C \in \mathcal{C}_d$ is such that $C(j) = 0$ for some $j\in [d]$, then we also define the converse relationship. The \emph{positive $j$-child} of~$C$ is the (unique) maximal $C' \leq C$ such that $C'(j) = 1$ and the \emph{negative $j$-child} is the $j$-polar opposite of~$C'$.

We define a related notion for neighbours of a vertex~$x$.  The vertex~$x$ has an \emph{opposing pair of protected neighbours} if both $x+e_j$ and $x-e_j$ are protected for some free coordinate~$j$.  Note that if $C$ and $C'$ are polar configurations and $C''$ is their common parent, then
\[
P^{C}_k(x + e_j) \cap P^{C'}_k(x - e_j) = \emptyset
\]
for all~$k \leq t - \norm{x}$.  Moreover,
\[
P^{C}_k(x + e_j), P^{C'}_k(x - e_j) \subset P^{C''}_{k+1}(x).
\]

Now we are ready to state and to prove Lemma~\ref{le:key}. In the applications that follow, we shall often set $k = t - \norm{x}$, $\pos(C) = \{i : x_i > 0\}$, $\NN(C) = \{i : x_i < 0\}$, and $\free(C) = \{i : x_i = 0\}$, and it may be helpful to think of them in this way in the proof of Lemma~\ref{le:key}.

\begin{lemma}\label{le:key}
Let $t \in \N$ and $d \geq 2$. Suppose that $x \in B_t$ is protected. Let $C \in \mathcal{C}_d$, let $a := \abs{\free(C)}$ denote the number of free coordinates of~$C$, and let $k \leq t - \norm{x}$ be a non-negative integer. Then
\begin{equation}\label{eq:ProtectedSum}
\left\lvert P_{k}^C(x) \right\rvert \geq \sum_{i=0}^k \binom{a}{i}.
\end{equation}
\end{lemma}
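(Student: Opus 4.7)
The plan is to prove Lemma~\ref{le:key} by strong induction on $a + k$. The base case $k = 0$ is immediate, since $x \in P_0^C(x)$ gives $|P_0^C(x)| \geq 1 = \binom{a}{0}$. For the inductive step, my first move would be to establish the following counting fact: if $C$ is consistent with the sign pattern of $x$ (meaning $C(i) = \operatorname{sign}(x_i)$ whenever $x_i \neq 0$), then a protected $x$ has at least $a + 1$ protected $C$-compatible out-neighbours. Indeed, $x$ has at most $d - 1$ infected neighbours among its $2d$ neighbours in $\T_n^d$; since the $d - a$ non-$C$-compatible neighbours are in-neighbours, in the worst case all $d - 1$ infected neighbours are $C$-compatible, leaving at least $(d + a) - (d - 1) = a + 1$ uninfected $C$-compatible out-neighbours, each of which is automatically protected. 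Non-consistent configurations will be handled by reduction to this case.

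The induction then splits into two cases depending on whether $x$ has an \emph{opposing pair} of protected neighbours $x + e_j$ and $x - e_j$ for some $j \in \free(C)$. In the opposing-pair case, let $C^+$ and $C^-$ denote the positive and negative $j$-children of $C$ (each with $a - 1$ free coordinates); applying the inductive hypothesis to $(x + e_j, C^+, k - 1)$ and $(x - e_j, C^-, k - 1)$ yields two disjoint subsets of $P_k^C(x)$ (disjoint because they respectively lie in $\{z_j \geq 1\}$ and $\{z_j \leq -1\}$) of total size at least $2\sum_{i=0}^{k-1}\binom{a-1}{i}$. To recover the remaining $\binom{a-1}{k}$ sites with $z_j = 0$, I would invoke a subsidiary slicing argument: restricting to the hyperplane $\{z : z_j = 0\}$ reduces the question to an extremal statement for $d$-neighbour bootstrap percolation on $\Z^{d-1}$ (a super-saturated model), which can be established by a double-counting argument of the sort alluded to in the discussion preceding the lemma. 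Pascal's identity $\binom{a}{i} = \binom{a-1}{i} + \binom{a-1}{i-1}$ then yields $2\sum_{i=0}^{k-1}\binom{a-1}{i} + \binom{a-1}{k} = \sum_{i=0}^{k}\binom{a}{i}$, closing this case.

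In the no-opposing-pair case, each free direction of $C$ contributes at most one protected $C$-compatible out-neighbour, accounting for at most $a$ of the at least $a + 1$ guaranteed by the counting step; hence at least one protected out-neighbour $y := x + C(j^*)e_{j^*}$ arises from some constrained direction $j^* \in \pos(C) \cup \NN(C)$. Since $y$'s natural configuration also has $a$ free coordinates, the inductive hypothesis applied to $(y, C, k - 1)$ yields $|P_{k-1}^C(y)| \geq \sum_{i=0}^{k-1}\binom{a}{i}$ sites, all of which embed into $P_k^C(x)$. The remaining $\binom{a}{k}$ sites required (which must satisfy $z_{j^*} = x_{j^*}$) come from combining this with the contributions of any free-direction protected neighbours, invoking the inductive hypothesis at their respective $j$-child configurations and then carefully arranging the contributions to be disjoint.

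The main obstacle will be the auxiliary slicing argument in the opposing-pair case: it requires a standalone extremal bound for super-saturated bootstrap percolation on $\Z^{d-1}$ (threshold exceeding the dimension) and is not directly an instance of the main lemma, so proving it by the double-counting approach indicated in the text involves tracing how the ``at least $d - 1$ uninfected neighbours'' constraint propagates across layers, possibly via a nested induction on dimension. Careful book-keeping in the no-opposing-pair case, to prevent double-counting across the constrained-direction and free-direction contributions, will be a secondary technical challenge.
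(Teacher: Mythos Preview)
Your overall architecture matches the paper's proof: induction on $a+k$, split into the opposing-pair and no-opposing-pair cases, with the double-counting ``slicing'' argument supplying the missing $\binom{a-1}{k}$ sites in Case~1. That part is fine, and the paper executes the slicing argument exactly as you describe (bipartite double counting between consecutive layers inside the hyperplane $\{y_j = x_j\}$).

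The real gap is in your Case~2. You propose to find the remaining $\binom{a}{k}$ sites (those with $z_{j^*} = x_{j^*}$) by harvesting free-direction protected neighbours of $x$ and applying the induction hypothesis to each at its $j$-child configuration. This does not work cleanly: if $y = x + \varepsilon e_i$ for some $i \in \free(C)$, then applying the hypothesis to $(y, C^{i,\varepsilon}, k-1)$ produces protected sites $z$ satisfying $(z_{j^*} - x_{j^*})C(j^*) \geq 0$, not $z_{j^*} = x_{j^*}$. So these sites can (and generically will) spill into the region $\{z_{j^*} \neq x_{j^*}\}$ already covered by $P_{k-1}^C(x')$, and you have no mechanism to force them to stay in the slice. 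The ``careful book-keeping'' you flag as a secondary challenge is in fact the central difficulty, and I do not see how to resolve it along your proposed lines.

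The paper's fix is simply to reuse the Case~1 machinery. Define $Q_j(x) = \{y \in P_j^C(x) : y_{j^*} = x_{j^*}\}$ and run the \emph{same} bipartite double-counting argument between $Q_{j-1}(x)$ and $Q_j(x)$: each $y \in Q_{j-1}(x)$ has $C$-degree at least $2(d-s)+(s-1)-(j-1) = 2d-s-j$ in the slice (one more than in Case~1, since direction $j^*$ is now fixed rather than a free direction set to a specific value), so at least $a-j+1$ protected neighbours in $Q_j(x)$; each $z \in Q_j(x)$ has at most $j$ neighbours in $Q_{j-1}(x)$. This gives $|Q_j(x)| \geq \binom{a}{j}$ directly. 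So the two cases are handled by essentially the same slicing argument, just with slightly different degree counts; no appeal to free-direction neighbours is needed.

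A smaller point: the ``consistency'' reduction you propose is unnecessary. The count of $a+1$ protected $C$-compatible neighbours holds for arbitrary $C$, since $x$ always has exactly $d+a$ $C$-compatible neighbours and at most $d-1$ unprotected ones; no relationship between $C$ and the sign pattern of $x$ is required.
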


\begin{proof}
Without loss of generality, let $\pos(C) = [d] \setminus \free(C)$ and let $\NN(C) = \emptyset$. Let $s = d - a$. We may assume that $\pos(C) = [s]$ and that $\free(C) = \{s + 1, \ldots, d\}$. Let $x = (x_1, \ldots, x_d)$ and suppose that $x_i \geq 0$ for all~$i\in[d]$.

We shall prove the result by induction on $a + k$. If $a = k = 0$ then $x \in S_t$ and $P_0^C(x)=\{x\}$, so we have~\eqref{eq:ProtectedSum}. (In fact, we do not use $a=0$ here.)

Now suppose that the result holds for all values up to $a + k - 1$. We shall show that if $x$ has an opposing pair of $C$-compatible protected neighbours $x + e_i$ and $x - e_i$, then by induction there exist large, disjoint protected sets $P_{k-1}^{C'}(x + e_i)$ and~$P_{k-1}^{C''}(x - e_i)$ inside~$P_{k}^C(x)$, where $C'$ and $C''$ denote the positive and negative $i$-child of~$C$, respectively. Then we shall show that there exists an additional set of sites in $P_k^C(x)$, disjoint from both $P_{k-1}^{C'}(x + e_i)$ and $P_{k-1}^{C''}(x - e_i)$. If $x$ does not have an opposing pair of $C$-compatible protected neighbours, and $x'$ is a $C$-compatible protected neighbour of~$x$, then we shall show by induction that $P_{k-1}^{C}(x')$ has almost as many sites as we have claimed. We shall find the remaining protected vertices in $P_{k}^{C}(x)$ separately, disjoint from $P_{k-1}^{C}(x')$.

Thus, the remainder of the proof is split into two cases according to whether or not $x$ has an opposing pair of $C$-compatible protected neighbours. Once we have proved the result in both of these cases, the proof will be complete.

\emph{Case} 1: Suppose that $x$ has an opposing pair of $C$-compatible protected neighbours. Without loss of generality, suppose that $x^+ := x + e_{s + 1}$ and $x^- := x - e_{s + 1}$ are both protected. Let $C'$ and $C''$ denote the positive and negative $(s + 1)$-child of~$C$, respectively. Observe that
\[
\min_{y\in S_k(x)} \lVert x^{+} - y\rVert = \min_{y\in S_k(x)} \lVert x^{-} - y\rVert = k - 1,
\]
and that $C'$ and $C''$ each have $a-1$ free coordinates. Hence, by induction, $P_{k-1}^{C'}(x^+)$ and $P_{k-1}^{C''}(x^-)$ each contain at least~$\sum_{i=0}^{k-1} \binom{a-1}{i}$ vertices. By construction, we have
\[
P_{k-1}^{C''}(x^+) \cup P_{k-1}^{C'}(x^-) \subset P_k^C(x),
\]
and $P_{k-1}^{C''}(x^+) \cap P_{k-1}^{C'}(x^-) = \emptyset$. Thus,
\begin{align*}
\left\lvert P_{k-1}^{C'}(x^+) \cup P_{k-1}^{C''}(x^-) \right\rvert &\geq 2\left(\dbinom{a-1}{0} + \ldots + \dbinom{a-1}{k-1}\right)  \\
&=  \dbinom{a}{0} + \ldots + \dbinom{a}{k-1} + \dbinom{a-1}{k-1}.
\end{align*}

In order to prove~\eqref{eq:ProtectedSum}, we must show that $P_k^C(x)$ contains at least~$\binom{a}{k} - \binom{a-1}{k-1} = \binom{a-1}{k}$ additional sites. For each $j \geq 0$, let $P_j^0(x) = \{y \in P_j^C(x) : y_{s+1} = x_{s+1}\}$. By definition, $P_j^0(x)$ is disjoint from both $P_{k-1}^{C'}(x^+)$ and $P_{k-1}^{C''}(x^-)$. We shall show that $P_k^0(x)$ contains the required number of sites.

For each $j \geq 1$, let $G_j$ be the bipartite graph with classes~$P_{j-1}^0(x)$ and~$P_{j}^0(x)$, with two vertices adjacent if and only if they are adjacent in $B_t$. We shall obtain a lower bound on $\abs{P_j^0(x)}/\abs{P_{j-1}^0(x)}$ by double counting the edges in $G_j$.

Let $z \in P_{j}^0(x)$. We would like an upper bound on the degree of~$z$ in $G_j$. We obtained $z$ from $x$ by adding a total of~$j$ to some of the $d - 1$ coordinates of~$x$ other than the $(s+1)$th. To obtain a neighbour of~$z$ in $P_{j-1}^0(x)$, we have to subtract $1$ from one of the coordinates to which we have added at least~$1$. There are at most~$j$ such coordinates, so $z$ has at most~$j$ neighbours in $P_{j-1}^0(x)$.

Now let $y \in P_{j-1}^0(x)$. This time, we want a lower bound on the degree of~$y$ in $G_j$. We do this by bounding $d^C(y)$. We obtained $y$ from $x$ by adding a total of~$j - 1$ to some of the $d - 1$ coordinates of~$x$ other than the $(s+1)$th. For every $i$ such that $y_i = x_i$, $y$ has two $C$-compatible neighbours in direction~$i$, obtained by changing $y_i$ to~$x_i + 1$ or to~$x_i - 1$. If $y_i \neq x_i$ and $w$ is a $C$-compatible neighbour of~$y$ in direction~$i$, then $w_i - x_i$ must have the same sign as $y_i - x_i$, so there is only one such neighbour. This holds for each of the first $s$ coordinates, as well as for any coordinate~$i$ among the last $d - s - 1$ coordinates whose value we have already changed from $x_i$. It follows that $d^C(y)$ is minimized when we have already changed $j - 1$ of the last $d - s - 1$ coordinates. Hence,
\[
d^C(y) \geq 2(d - s - 1) + s - (j - 1) = 2d - s - j - 1.
\]
Since $y$ is protected, at most~$d - 1$ of its $C$-compatible neighbours are not protected. Hence, $y$ has at least
\[
2d - s - j - 1 - (d - 1) = d - s - j = a - j
\]
neighbours in $P_{j}^0(x)$.

By double counting the edges in $G_j$, first from $P_j^0(x)$ to~$P_{j-1}^0(x)$, and then from $P_{j-1}^0(x)$ to~$P_j^0(x)$, and using our bounds on the maximum and minimum degrees of vertices in these two classes, we obtain the inequalities
\[
j \left\lvert P_{j}^0(x) \right\rvert \geq \left\lvert E(G_j) \right\rvert \geq \left(a - j \right) \left\lvert P_{j-1}^0(x) \right\rvert.
\]
Thus,
\[
\left\lvert P_{j}^0(x) \right\rvert \geq \dfrac{a - j}{j} \left\lvert P_{j-1}^0(x) \right\rvert.
\]
Noting that $\abs{P_0^0(x)} = 1$, it follows by induction on $j$ that $\abs{P_j^0(x)} \geq \binom{a-1}{j}$. Taking $j=k$ proves~\eqref{eq:ProtectedSum} in the case where $x$ has an opposing pair of $C$-compatible protected neighbours.

\emph{Case} 2: Suppose that $x$ has no opposing pair of $C$-compatible protected neighbours. Observe that the total number of $C$-compatible neighbours of~$x$ is~$d+a$. Since $x$ is protected, at most~$d-1$ of its neighbours are not protected, so at least~$a+1$ of its $C$-compatible neighbours are protected. Since $x$ has no opposing pair of $C$-compatible protected neighbours, at most~$a = d - s$ of these are of the form~$x\pm e_i$ with $s+1\leq i\leq d$, so $x$ must have at least~one protected neighbour of the form~$x + e_i$ with $1 \leq i \leq s$. Without loss of generality, let $x' := (x_1 + 1, x_2, \ldots, x_d)$ be protected. We have
\[
\min_{y\in S_k(x)} \lVert x' - y\rVert = k - 1,
\]
and $x'$ still has $a$ free coordinates. Hence by induction, $P_{k-1}^C(x') \subset P_k^C(x)$ is such that
\[
\left\lvert P_{k-1}^C(x') \right\rvert \geq \sum_{i=0}^{k-1} \dbinom{a}{i}.
\]
In order to prove~\eqref{eq:ProtectedSum}, we need to find an additional $\binom{a}{k}$ sites in $P_k^C(x)$ disjoint from the sites in $P_{k-1}^C(x')$.

All of the elements of~$P_{k-1}^C(x')$ have first coordinate at least~$x_1 + 1$. For each $j \geq 0$, let $Q_j(x) = \{y \in P_j^C(x) : y_1 = x_1\}$. By definition, $Q_j(x) \cap P_{j-1}^C(x') = \emptyset$ for all~$j\geq 1$. We shall show that $Q_k(x)$ contains the required number of sites.

For each $j\geq 1$, let $H_j$ be the bipartite graph with classes~$Q_{j-1}(x)$ and~$Q_j(x)$ in which two vertices are adjacent if and only if they are adjacent in $B_t$. As in Case 1, we bound $\abs{Q_j(x)}/\abs{Q_{j-1}(x)}$ by double counting edges in $H_j$.

As before, any element of~$Q_j(x)$ has at most~$j$ neighbours in $Q_{j-1}(x)$. Let $y \in Q_{j-1}(x)$. Then $y$ has two $C$-compatible neighbours in each of the at most~$a$ coordinates~$i$ for which $y_i = x_i$, but only one $C$-compatible neighbour in each coordinate~$i$ for which $y_i \neq x_i$. Again, the degree~$d^C(y)$ is minimized when we have obtained $y$ from $x$ by changing the value of~$j-1$ of the last $a$ coordinates. Hence,
\[
d^C(y) \geq 2(d - s) + (s - 1) - (j - 1) = 2d - s - j.
\]
At most~$d-1$ of the $C$-compatible neighbours of~$y$ are not protected, so $y$ has at least
\[
2d - s - j - (d - 1) = d - s - j + 1 = a - j + 1
\]
neighbours in $Q_j(x)$. Therefore,
\[
j \lvert Q_{j}(x) \rvert \geq \lvert E(H_j) \rvert \geq (a - j + 1) \lvert Q_{j-1}(x) \rvert,
\]
and thus
\[
\lvert Q_{j}(x) \rvert \geq \dfrac{a - j + 1}{j} \lvert Q_{j-1}(x) \rvert.
\]
Because $\abs{Q_0(x)} = 1$, it follows by induction on $j$ that $\abs{Q_j} \geq \binom{a}{j}$ for all~$j\geq 0$, as required. This completes the case where $x$ does not have an opposing pair of $C$-compatible neighbours, and hence also the proof of the lemma.
\end{proof}

It follows immediately from Lemma~\ref{le:key} that the minimum size of a subset of~$S_t$ that protects the origin is~$\ell_t$, and hence that the minimum size of a subset of~$B_t$ that protects the origin is~$m_t$.

\begin{corollary}\label{MinProtected}
Let $t \in \N$ and $d \geq 2$. Suppose that the origin is protected. Then
\[
\lvert P(S_t) \rvert \geq \ell_t.
\]
In particular, if $t \geq d$, then $S_t$ contains at least~$2^d$ protected vertices. Moreover,
\[
\lvert P(B_t) \rvert \geq m_t.
\]
\end{corollary}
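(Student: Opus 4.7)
The plan is to derive all three statements by applying Lemma~\ref{le:key} with $x = 0$ and the all-free configuration $C_0 := (0,0,\ldots,0) \in \configs$, for which every direction is free so $a = \abs{\free(C_0)} = d$, and every vertex of $B_t$ is $C_0$-compatible with the origin.

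First I would prove the sphere bound. Since the origin is protected by hypothesis, and since $\norm{0}=0$, we may apply Lemma~\ref{le:key} with $k = t$ to obtain
\[
\abs{P(S_t)} = \abs{P_t^{C_0}(0)} \geq \sum_{i=0}^t \binom{d}{i} = \ell_t,
\]
the equality on the left holding because $C_0$-compatibility imposes no restriction and $P_t^{C_0}(0) \subseteq S_t$. When $t \geq d$, the binomial sum is the complete one $\sum_{i=0}^d \binom{d}{i} = 2^d$ (recall the convention $\binom{d}{i}=0$ for $i>d$), giving the $2^d$ bound.

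Next I would deduce the ball bound by stratifying $B_t$ into spheres. For each $k \in \{0,1,\ldots,t\}$, the hypothesis $k \leq t - \norm{0} = t$ permits another application of Lemma~\ref{le:key}, yielding
\[
\abs{P(S_k)} = \abs{P_k^{C_0}(0)} \geq \sum_{i=0}^k \binom{d}{i} = \ell_k.
\]
Since $B_t = \bigsqcup_{k=0}^t S_k$ is a disjoint union, summing over $k$ and invoking the identity $m_t = \sum_{k=0}^t \ell_k$ from \eqref{eq:minball} gives
\[
\abs{P(B_t)} = \sum_{k=0}^t \abs{P(S_k)} \geq \sum_{k=0}^t \ell_k = m_t,
\]
completing the proof.

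There is no real obstacle here: once Lemma~\ref{le:key} is available, this corollary is an immediate bookkeeping consequence. The only point to verify is the trivial translation between $P(S_k)$ and $P_k^{C_0}(0)$, and the arithmetic identity $m_t = \sum_{k=0}^t \ell_k$, which is built into the definition \eqref{eq:minball}.
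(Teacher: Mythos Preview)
Your proposal is correct and takes essentially the same approach as the paper: the paper's entire proof is the one-line instruction to apply Lemma~\ref{le:key} to the origin with $C = (0,\ldots,0)$ and $k = 1, 2, \ldots, t$, which is exactly what you do (with the harmless addition of $k=0$ and the explicit summation for the ball bound).
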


\begin{proof}
Apply Lemma~\ref{le:key} to the origin with $C = (0, \ldots, 0)$ and $k = 1$,~$2$, $\ldots\,$,~$t$.
\end{proof}

Now we classify the extremal sets of protected vertices in $B_t$. Recall from the introduction that the motivation for Lemma~\ref{le:key} came from the conjecture that the extremal configurations should all be (what we have so far called) columns, as in~\eqref{eq:column}. In the next theorem, we prove that this is essentially correct. More specifically, we prove that the only extremal sets are either columns or sets that are almost columns except for the top and bottom sites.

Formally, we call $P(B_t)$ \emph{canonical} if there exists $j \in [d]$ and an \emph{orientation} $\varepsilon_i \in \{-1, 1\}$ for each $i \in [d]\setminus\{j\}$ such that
\begin{equation}\label{eq:canon}
P(B_t) = \{ x \in B_t \, : \, x_i \in \{0, \varepsilon_i\} \text{ for all } i\neq j \}.
\end{equation}
 Given $j$ and the $\varepsilon_i$, let
\[
V_j^+(t) = \{te_j\} \cup \bigl\{(t - 1)e_j - \varepsilon_i e_i \, : \, i \in [d] \setminus \{j\} \bigr\}
\]
and let
\[
V_j^-(t) = \{-te_j\} \cup \bigl\{(-t + 1)e_j - \varepsilon_i e_i \, : \, i \in [d] \setminus \{j\} \bigr\}.
\]
We call $P(B_t)$ \emph{semi-canonical} if there exist $v^+ \in V_j^+(t)$ and $v^- \in V_j^-(t)$ such that
\begin{equation}\label{eq:semicanon}
P(B_t) = \big( \{ x \in B_t \, : \, x_i \in \{0, \varepsilon_i\} \text{ for all } i\neq j \} \setminus \{te_j,-te_j\} \big) \cup \{v^+, v^-\}.
\end{equation}
Note that canonical sets are semi-canonical. We call the vertices $v^+$ and~$v^-$ the \emph{extreme points} of~$P(B_t)$. The direction~$j$ is the \emph{direction of alignment} of~$P(B_t)$, and $P(B_t)$ is said to be \emph{$j$-aligned}.

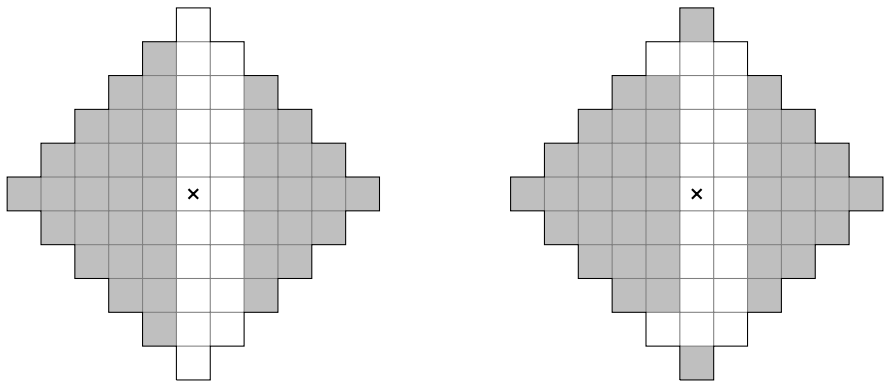
\begin{figure}[ht]
 \begin{minipage}{.45\textwidth}
  \centering
  \begin{tikzpicture}[scale=0.45,>=latex]
   \path [fill=gray!50] (4,1) rectangle (7,10) (3,2) rectangle (8,9) (2,3) rectangle (9,8) (1,4) rectangle (10,7) (0,5) rectangle (11,6);
   \path [fill=white] (5,0) rectangle (6,11) (6,1) rectangle (7,10);
   \draw [gray] (5,1) rectangle (6,10) (4,2) rectangle (7,9) (3,3) rectangle (8,8) (2,4) rectangle (9,7) (1,5) rectangle (10,6);
   \draw [black] (5,0) -- (6,0) -- (6,1) -- (7,1) -- (7,2) -- (8,2) -- (8,3) -- (9,3) -- (9,4) -- (10,4) -- (10,5) -- (11,5) -- (11,6) -- (10,6) -- (10,7) -- (9,7) -- (9,8) -- (8,8) -- (8,9) -- (7,9) -- (7,10) -- (6,10) -- (6,11) -- (5,11) -- (5,10) -- (4,10) -- (4,9) -- (3,9) -- (3,8) -- (2,8) -- (2,7) -- (1,7) -- (1,6) -- (0,6) -- (0,5) -- (1,5) -- (1,4) -- (2,4) -- (2,3) -- (3,3) -- (3,2) -- (4,2) -- (4,1) -- (5,1) -- (5,0);
   \node [cross out,draw,thick,inner sep=0,minimum size=0.1cm] at (5.5,5.5) {};
  \end{tikzpicture}
 \end{minipage}
 \begin{minipage}{.45\textwidth}
  \centering
  \begin{tikzpicture}[scale=0.45,>=latex]
   \path [fill=gray!50] (3,2) rectangle (8,9) (2,3) rectangle (9,8) (1,4) rectangle (10,7) (0,5) rectangle (11,6) (5,0) rectangle (6,1) (5,10) rectangle (6,11);
   \path [fill=white] (5,1) rectangle (7,10);
   \path [fill=gray!50] (5,0) rectangle (6,1) (5,10) rectangle (6,11);
   \draw [gray] (5,1) rectangle (6,10) (4,2) rectangle (7,9) (3,3) rectangle (8,8) (2,4) rectangle (9,7) (1,5) rectangle (10,6);
   \draw [black] (5,0) -- (6,0) -- (6,1) -- (7,1) -- (7,2) -- (8,2) -- (8,3) -- (9,3) -- (9,4) -- (10,4) -- (10,5) -- (11,5) -- (11,6) -- (10,6) -- (10,7) -- (9,7) -- (9,8) -- (8,8) -- (8,9) -- (7,9) -- (7,10) -- (6,10) -- (6,11) -- (5,11) -- (5,10) -- (4,10) -- (4,9) -- (3,9) -- (3,8) -- (2,8) -- (2,7) -- (1,7) -- (1,6) -- (0,6) -- (0,5) -- (1,5) -- (1,4) -- (2,4) -- (2,3) -- (3,3) -- (3,2) -- (4,2) -- (4,1) -- (5,1) -- (5,0);
   \node [cross out,draw,thick,inner sep=0,minimum size=0.1cm] at (5.5,5.5) {};
  \end{tikzpicture}
 \end{minipage}
 \caption{On the left, a canonical set of protected sites, and on the right, a semi-canonical set of protected sites.}
\end{figure}

We are ready to state the main theorem of this section. We say that the set of uninfected sites in $S_t$ is \emph{minimal} and that $S_t$ is a \emph{minimal layer} if $\abs{P(S_t)} = \ell_t$. Similarly, we say that $B_t$ is \emph{minimal} if $\abs{P(B_t)}=m_t$.

\begin{theorem}\label{th:configs}
Let $t$, $d \geq 2$. Suppose that the origin is protected and that $B_t$ is minimal. Then $P(B_t)$ is semi-canonical.
\end{theorem}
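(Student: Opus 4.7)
Since $\abs{P(B_t)} = m_t = \sum_{k=0}^t \ell_k$ and Lemma~\ref{le:key} (applied at the origin with the all-free configuration) gives $\abs{P(S_k)} \geq \ell_k$ for each $k$, the hypothesis forces the layerwise equalities $\abs{P(S_k)} = \ell_k$ for every $0 \leq k \leq t$. The plan is to combine this tightness with a careful tracking of the equality cases in the proof of Lemma~\ref{le:key} in order to reconstruct $P(B_t)$ layer by layer.

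First I would apply Lemma~\ref{le:key} to the origin with $C = (0, \ldots, 0)$. Case~2 of that proof requires a protected neighbour of the form $e_i$ with $i \in \pos(C) = \emptyset$, which is impossible, so the origin must fall into Case~1: there exists a direction, which I take WLOG to be $j_* = 1$, such that both $\pm e_1$ are protected. This will be the direction of alignment. Tightness of the Case~1 decomposition at each layer then forces, for every $1 \leq k \leq t$, the simultaneous equalities $\abs{P_{k-1}^{C_+}(e_1)} = \ell_{k-1, d-1}$, $\abs{P_{k-1}^{C_-}(-e_1)} = \ell_{k-1, d-1}$, and $\abs{P_k^0(0)} = \binom{d-1}{k}$, where $C_\pm$ are the configurations that positively, respectively negatively, constrain direction~$1$ and leave the remaining directions free.

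Next I would pin down the ``middle slice'' $\{y \in P(B_t) : y_1 = 0\}$. Equality in the double-counting step in the proof of Lemma~\ref{le:key} forces the bipartite graph between consecutive slices $P_{k-1}^0(0)$ and $P_k^0(0)$ to be tight on both sides, with every vertex achieving the extreme degree in the relevant inequality. An induction on $k$ then pins down a unique orientation $\varepsilon_i \in \{-1, +1\}$ for each $i \neq 1$ and identifies $P_k^0(0)$ with $\{y \in S_k : y_1 = 0,\ y_i \in \{0, \varepsilon_i\} \text{ for all } i \neq 1\}$. A parallel recursion, applying Lemma~\ref{le:key} at $e_1$ under $C_+$ and at $-e_1$ under $C_-$, forces $P_k^{C_+}(e_1)$ and $P_k^{C_-}(-e_1)$ to fit the canonical column above $e_1$ and below $-e_1$, respectively, with the orientations $\varepsilon_i$ inherited from the middle slice via the adjacency between the layers $\{y_1 = 0\}$ and $\{y_1 = \pm 1\}$.

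For every $k < t$ the above determines $P(S_k)$ uniquely as the canonical slice. The only remaining freedom lies in the top layer $S_t$: the two ``tips'' $\pm t e_1$ of the canonical column are the only protected vertices with no protected out-neighbour in $B_t$, so the double-counting and degree constraints continue to hold if each tip is replaced by a vertex of $V_1^\pm(t)$, producing exactly the semi-canonical family of~\eqref{eq:semicanon}. I expect the main technical obstacle to be verifying that the orientations determined independently in the middle slice and in the two half-column recursions glue to a single coherent global orientation, that is, ruling out configurations in which the upper and lower halves appear aligned by different sign patterns. This uniqueness is forced by the connectivity of the bipartite layer graphs and the slice-to-slice adjacency that links the middle slice to each half.
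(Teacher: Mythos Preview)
Your approach is genuinely different from the paper's, and as written it has a real gap in the part you yourself flag as the ``main technical obstacle''.

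The paper argues by a clean induction on the radius $k$. First it shows directly, by double-counting edges between $P_1$ and any candidate $P_2$, that $P(B_1)$ is canonical (that is, $P_1$ has exactly one opposing pair). Then, assuming $P(B_k)$ is canonical, it identifies the set $R_{k+1}$ of sites in $S_{k+1}$ having at least two neighbours in $P_k$, checks that $\abs{R_{k+1}} = \ell_{k+1} - 2$, and uses edge double-counting between $P_k$ and $P_{k+1}$ (equation~\eqref{eq:EqualDegrees}) to force $R_{k+1} \subset P_{k+1}$; the two remaining protected sites must be extreme points $v^{\pm}$, so $P(B_{k+1})$ is semi-canonical. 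Finally it shows that if $P(B_k)$ is only semi-canonical, not canonical, then no minimal $P_{k+1}$ exists, so the semi-canonical option can only occur at the outermost layer. No recursion into the Case~1/Case~2 structure of Lemma~\ref{le:key} is needed.

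Your plan instead decomposes $P(S_k)$ into $P_{k-1}^{C_+}(e_1) \cup P_{k-1}^{C_-}(-e_1) \cup P_k^0(0)$ and tries to determine each piece separately. The middle-slice part can be made to work: tightness at level $j=2$ does force $P_1^0(0)$ to contain exactly one of $\pm e_i$ for each $i>1$ (because if it contained an opposing pair it would have to miss some other direction entirely, and then fewer than $\binom{d-1}{2}$ vertices of $S_2$ in the slice could have both in-neighbours protected), and an induction on $j$ then pins down $P_k^0(0)$. But the two half-column recursions are not well-specified. When you apply Lemma~\ref{le:key} at $e_1$ with $C_+$, its proof splits again into Case~1 (an opposing pair $e_1 \pm e_i$ for some free $i \geq 2$) and Case~2 ($2e_1$ protected). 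Which case occurs depends on $P_2 \cap \{y_1 = 1\}$, and that is exactly the set you are trying to determine via the recursion; so the argument is circular as stated. This is precisely the gluing problem you anticipate, and ``connectivity of the bipartite layer graphs'' does not by itself rule out Case~1 occurring at some step along the spine, which would send the half-column off in a different direction and break the canonical shape. One fix is to stop after you have established that $P(B_1)$ is canonical, and then switch to the paper's layer-by-layer induction; another is to supply a separate argument (as in the paper's proof of the stability theorem) that $se_1$ has no opposing pair in directions $i \geq 2$.
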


We write $d_X(x)$ for the degree of~$x$ in the set~$X$, so
\[
d_X(x) = \bigl\lvert\{y\in X \, : \, \lVert x-y\rVert = 1\}\bigr\rvert.
\]
We also set $P_r = P(S_r)$.

\begin{proof}
We shall show that if $P(B_k)$ is semi-canonical and $S_{k+1}$ is minimal, then the only sets of sites in $S_{k+1}$ with enough neighbours in $P_k$ are the sets which make $P_{k+1}$ semi-canonical, and moreover that if $P(B_k)$ is not canonical then there are no sets of sites in $S_{k+1}$ with enough neighbours in $P(B_k)$. We shall repeatedly use the following equation for double counting edges.
\begin{equation}\label{eq:EqualDegrees}
\sum_{v \in P_k} d_{P_{k+1}}(v) = \sum_{w \in P_{k+1}} d_{P_k}(w).
\end{equation}

The sphere~$S_1$ is minimal, so by Corollary~\ref{MinProtected} $P_1$ consists of~$d+1$ sites. Suppose that $P(B_1)$ is not canonical. Then there must exist $i$, $j \in [d]$ such that $e_i$, $-e_i$, $e_j$, and $-e_j$ all belong to~$P_1$. A site in $S_2$ has degree~$2$ in $P_1$ if and only if it is of the form~$x + y$ for some $x,y \in P_1$ such that $x + y \neq 0$; otherwise, it has degree~$1$. Thus, if $m$ is the number of sites in $S_2$ with degree~$2$ in $P_1$, then $m \leq \binom{d+1}{2} - 2$. Let $Q$ be any set of~$\ell_2$ sites in $S_2$. Then
\begin{equation}\label{eq:S2in}
\sum_{v \in Q} d_{P_1}(v) \leq \ell_2 + m \leq d^2+d-1.
\end{equation}
From below, note that protected sites in $S_1$ have at least~$d$ protected out-neighbours. Therefore
\begin{equation}\label{eq:S1out}
\sum_{w \in P_1} d_{Q}(w) \geq d(d+1),
\end{equation}
which by \eqref{eq:EqualDegrees} and~\eqref{eq:S2in} is a contradiction. Therefore $P(B_1)$ is canonical.

The choice of~$P(B_1)$ determines the values of~$j$ and the $\varepsilon_i$. Throughout the rest of the proof, without loss of generality, let $j = 1$ and $\varepsilon_2 = \ldots = \varepsilon_d = 1$. Then $P_1 = \{e_1, -e_1, e_2, e_3, \ldots, e_d\}$.

We use induction on $k$. First, suppose that $P(B_k)$ is canonical. We shall show that $P(B_{k+1})$ is semi-canonical by double counting edges between consecutive spheres. Later we show that if $P(B_k)$ is semi-canonical but not canonical, then it is not possible to increase the number of minimal layers, and there is a contradiction.

Let $R_{k+1}$ denote the set of sites in $S_{k+1}$ with at least~two neighbours in $P_k$. As with $k=1$, our aim is to show that $R_{k+1} \subset P_{k+1}$.  First, we show that for all~$k$,
\[
\abs{R_{k+1}} = \ell_{k+1} - 2.
\]
There are two cases to consider.  First, suppose that $k \geq d - 1$.  In this case, no vertex~$y \in R_{k+1}$ is such that $y_1 = 0$.  If $y \in R_{k+1}$ has $j$ non-zero coordinates among its last $d-1$ coordinates, then $d_{P_k}(y) = j+1$. Since all sites in $R_{k+1}$ have degree at least~$2$ in $P_k$, we must have $j\geq 1$. For each~$j$, there are $2\binom{d-1}{j}$ such sites in $R_{k+1}$ (the factor of~$2$ comes from the two choices for the first coordinate). Hence,
\[
\abs{R_{k+1}} = 2\sum_{j=1}^{d-1} \dbinom{d-1}{j} = 2^d - 2,
\]
as required.

Second, suppose that $k \leq d - 2$.  In this case, there do exist vertices~$y \in R_{k+1}$ with $y_1 = 0$.  As in the previous case, every vertex in $R_{k+1}$ must have at least~one of its last $d - 1$ coordinates not equal to~0.  Once again, for each~$j \in [k]$, there are $2\binom{d-1}{j}$ sites in $R_{k+1}$ with $j$ non-zero coordinates among their last $d-1$ coordinates.  If $k+1$ of the last $d-1$ coordinates of~$y \in P_k$ do not equal $0$, then we must have $y_1 = 0$.  Thus, we have
\[
\abs{R_{k+1}} = \dbinom{d-1}{k+1} + 2\sum_{j=1}^k \dbinom{d-1}{j} = \ell_{k+1} - 2,
\]
as claimed.

It follows from the definition of~$R_{k+1}$ that
\begin{equation}\label{eq:equalityR}
\sum_{x \in P_{k+1}} d_{P_k}(x) \leq \sum_{x \in R_{k+1}} d_{P_k}(x) + 2.
\end{equation}
Because $S_{k+1}$ is minimal, equality holds in~\eqref{eq:equalityR} only if $R_{k+1} \subset P_{k+1}$.  Observe that by induction, $R_{k+1}$ is precisely the canonical set for $S_{k+1}$, except for the extreme points.  Hence,
\[
\sum_{x \in R_{k+1}} d_{P_k}(x) = \sum_{y \in P_k} d_{P_{k+1}}(y) - 2.
\]
This means that equality holds in~\eqref{eq:equalityR}, which implies that $R_{k+1} \subset P_{k+1}$.

Every vertex~$x \in P_k$ has enough neighbours in $R_{k+1}$ except for $ke_j$ and $-ke_j$, which each have one neighbour in $P_{k-1}$ but only $d - 1$ neighbours in $R_{k+1}$. Therefore we must have $v^+ \in V^+(k + 1)$ and $v^- \in V^-(k + 1)$ in $P_{k+1}$, too. So
\[
P_{k+1} = R_{k+1} \cup \{v^+, v^-\}
\]
is semi-canonical. This completes the proof of the theorem in the case where $P_k$ is canonical.

Now we show that $P(B_k)$ being non-canonical leads to a contradiction. Without loss of generality, suppose that $x = (t-1,-1,0,\dots,0)$ belongs to~$P_k$. Then $d_{P_{k-1}}(x) = 1$, which means that $x$ needs $d$ protected neighbours in $S_{k+1}$. However, no element of~$R_{k+1}$ is adjacent to~$x$. This means that in order for $P_{k+1}$ to satisfy~\eqref{eq:EqualDegrees} we need to add $d \geq 2$ sites to~$R_{k+1}$ to protect $x$, as well as at least~one site to protect the other extreme point of~$P(B_k)$, contradicting the assumption that $B_{k+1}$ is minimal. This completes the proof.
\end{proof}

\begin{corollary}\label{NumExtConfigs}
Let $t$, $d \geq 2$. Then there are exactly~$d^3 2^{d-1}$ minimal configurations of protected sites in $B_t$. \qed
\end{corollary}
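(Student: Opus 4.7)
The plan is a direct enumeration using Theorem \ref{th:configs}, which tells us that every minimal configuration $P(B_t)$ is semi-canonical, hence described by the data $(j, (\varepsilon_i)_{i\neq j}, v^+, v^-)$ appearing in \eqref{eq:semicanon}. So I would count the number of such tuples and then argue that distinct tuples produce distinct configurations.

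First I would count the tuples. The direction of alignment $j\in[d]$ contributes a factor of $d$. The orientations $\varepsilon_i\in\{-1,1\}$ for the $d-1$ remaining coordinates contribute a factor of $2^{d-1}$. For the extreme point $v^+$, the definition of $V_j^+(t)$ gives $\abs{V_j^+(t)} = 1 + (d-1) = d$ choices (namely $te_j$, or $(t-1)e_j - \varepsilon_i e_i$ for one of the $d-1$ indices $i\neq j$), and similarly $\abs{V_j^-(t)}=d$. Multiplying these four factors yields $d\cdot 2^{d-1}\cdot d\cdot d = d^3 2^{d-1}$.

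Next I would verify that the assignment from tuples to configurations is injective. Given a semi-canonical $P(B_t)$ with $t\geq 2$, the direction of alignment $j$ is recovered as the unique coordinate direction along which $P(B_t)$ contains sites of $\ell_1$-norm $t-1$ on both sides of the origin (equivalently, the axis of the ``column''); once $j$ is fixed, the orientations $\varepsilon_i$ for $i\neq j$ are read off from any site in $P(B_t)$ whose $i$-th coordinate is non-zero (such sites exist for all $i\neq j$ as soon as $t\geq 1$), and the extreme points $v^\pm$ are recovered as the two elements of $P(B_t)\cap S_t$. Hence the map $(j,\varepsilon,v^+,v^-)\mapsto P(B_t)$ is a bijection onto the family of semi-canonical sets.

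The only minor obstacle is the injectivity check in the previous paragraph — specifically, confirming that the direction $j$ is intrinsic to the set, which is immediate once $t\geq 2$ because the interior layers $S_r$ with $1\leq r\leq t-1$ of a semi-canonical set are canonical and their common axis of symmetry is unique. Combining the count with Theorem \ref{th:configs} then gives the stated total of $d^3 2^{d-1}$ minimal configurations.
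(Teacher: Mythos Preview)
Your approach is correct and is exactly what the paper intends: the corollary is stated with a bare \qed, so the paper is relying on precisely this direct enumeration of semi-canonical sets via Theorem~\ref{th:configs}.

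One small point worth tightening: your argument, as written, only shows that the number of minimal configurations is \emph{at most} $d^3 2^{d-1}$ (Theorem~\ref{th:configs} gives the inclusion of minimal configurations into semi-canonical sets, and your injectivity check then bounds the latter by the number of tuples). For the exact count you also need the reverse inclusion, namely that every semi-canonical set of the form~\eqref{eq:semicanon} actually protects the origin and hence is a minimal configuration. This is a routine verification (the only nontrivial site to check is $(t-1)e_j$, which has exactly $d-1$ infected neighbours once $v^+$ is placed), and the paper takes it for granted as well, but strictly speaking it should be mentioned.
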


\subsection{Near-minimal configurations}\label{se:stability}

Suppose that the number of protected sites in $B_t$ is not~$m_t$, but~$m_t+k$, where $k$ is small. What can we say about the positions of these sites? We would like to be able to show that they are not too far from being a canonical set together with an additional $k$ arbitrarily placed sites. Such a result would be interesting in its own right, but it also turns out that it is needed in the proof of the main theorem, Theorem~\ref{th:crittime}, to establish tighter bounds on the mean and variance of the number of uninfected sites at time~$t$. In particular, it will be important in the proof of Theorem~\ref{th:crittime} that the number of near-minimal configurations just described is~$O(t^{ck})$, where $c$ only depends on~$d$, and not~$O(t^{ctk})$, which is the trivial bound.

The stability result that we shall prove is the following.

\begin{theorem}\label{th:stability}
Let $t \geq 4d + 1$. Let $r_1$, $r_2$ be such that $r_1 \geq d$, $r_2 - r_1 \geq 3d + 1$, and~$r_2\leq t$. Suppose that the origin is protected and that $S_r$ is minimal for all~$r$ such that $r_1\leq r\leq r_2$. Then $P(B_{r_2})$ is semi-canonical. 
\end{theorem}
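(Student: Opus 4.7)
The proof has two main parts: first, establish that within the minimal region $[r_1, r_2]$ the layers are canonical in a common direction, and second, propagate this alignment to all smaller radii.

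\textbf{Structure on the minimal region.} Since $r_1 \geq d$, each minimal layer has $|P_r| = \ell_r = 2^d$ protected sites. The goal is to identify a direction $j \in [d]$ and orientations $\varepsilon_i \in \{-1, 1\}$ for $i \neq j$ such that $P_r$ is canonical (in direction $j$, orientations $\varepsilon_i$) for every $r \in [r_1, r_2 - 1]$, and $P_{r_2}$ is semi-canonical. The main tool is the double-counting identity~\eqref{eq:EqualDegrees} used in the proof of Theorem~\ref{th:configs}: between consecutive minimal layers $S_r$ and $S_{r+1}$, let $R_{r+1}$ be the sites in $S_{r+1}$ with at least two in-neighbors in $P_r$; because each protected site has at most $d-1$ unprotected neighbors, the degree accounting forces $R_{r+1} \subseteq P_{r+1}$. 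Any ``misalignment'' in a candidate layer---opposing antipodal pairs in multiple directions (as in the base case of Theorem~\ref{th:configs}), or a canonical structure in a different direction from neighbouring layers---produces a strict deficit in the edge count that cannot be sustained over $3d+1$ consecutive minimal layers. This pins down a unique direction $j$ and orientations $\varepsilon_i$ at some interior layer $r^* \in [r_1, r_2-1]$. From $r^*$, the inductive step of Theorem~\ref{th:configs} carries canonicity forward up to $S_{r_2-1}$, leaving $S_{r_2}$ semi-canonical.

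\textbf{Backward extension.} Fix the common $j$ and $\varepsilon_i$. A direct computation shows that every canonical $y \in P_r$ with $r \geq d$ has exactly $d-1$ out-neighbours outside canonical $P_{r+1}$; since $y$ is protected and so has at most $d-1$ unprotected neighbours, every in-neighbour of $y$ must be protected. A further direct check shows that the set of in-neighbours of canonical $P_r$ is exactly the canonical $P_{r-1}$, so canonical $P_{r-1} \subseteq P_{r-1}$. To rule out extras, suppose $x \in P_{r-1}$ has $x_i \notin \{0, \varepsilon_i\}$ for some $i \neq j$. By Lemma~\ref{le:key} applied to $x$ with $k = r_1 - \|x\|$, there exists a protected ascendant $y \in S_{r_1}$ with $y \geq x$ in the partial order on $B_t$. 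By the minimality of $S_{r_1}$, $y$ lies in canonical $P_{r_1}$, so $y_i \in \{0, \varepsilon_i\}$. But $y \geq x$ forces $y_i$ to share the sign of $x_i$ and satisfy $|y_i| \geq |x_i|$, which is incompatible with $x_i = -\varepsilon_i$ or $|x_i| \geq 2$---contradiction. Hence $P_{r-1}$ equals canonical $P_{r-1}$. Iterating downward from $r = r_1$ to $r = 1$ yields $P_r$ canonical for every $r < r_1$; the argument goes through uniformly for the small radii $r < d$ because the Lemma~\ref{le:key}/partial-order obstruction depends only on the forbidden coordinate values of $x$, not on $\|x\|$.

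Combining the two parts, $P(B_{r_2}) = \bigcup_{r=0}^{r_2} P_r$ is semi-canonical, as required. The main obstacle is the alignment-detection in Part~1: unlike Theorem~\ref{th:configs}, where the unique base layer $S_1$ forces the direction and orientations at once, here the alignment must be extracted from bipartite degree identities across a stretch of minimal layers, and the hypothesis $r_2 - r_1 \geq 3d+1$ is calibrated precisely to give enough slack for the edge-count deficits from any misalignment to accumulate into a contradiction.
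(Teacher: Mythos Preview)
Your Part~2 (backward extension) is essentially sound: the observation that a canonical site in $P_r$ (with $r\geq d$) has exactly $d-1$ non-canonical out-neighbours, hence all of its in-neighbours are forced protected, together with the Lemma~\ref{le:key} ascendant argument to rule out extras, does work. The paper takes a shorter route here (if $x\in S_{r-1}$ is non-canonical then every out-neighbour of $x$ is non-canonical, so $x$ has at least $d$ unprotected out-neighbours once $P_r$ is known to be exactly canonical, and $x$ is not protected), but yours is fine.

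Part~1 is where the real content of the theorem lies, and there your proposal has a genuine gap. The double-counting step from Theorem~\ref{th:configs} that you invoke does not stand on its own: the conclusion $R_{k+1}\subseteq P_{k+1}$ in that proof depends on already knowing that $P(B_k)$ is canonical---in particular on knowing $|R_{k+1}|=\ell_{k+1}-2$ and the exact edge count from $P_k$ into $R_{k+1}$. In Theorem~\ref{th:configs} this is supplied by the base layer $P_1$, which is forced canonical directly. Inside $[r_1,r_2]$ you have no such base case, and the sentence ``any misalignment produces a strict deficit in the edge count that cannot be sustained over $3d+1$ consecutive minimal layers'' is not an argument: you have not specified the deficit, shown it cannot cancel between layers, or explained how it singles out a direction~$j$.

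The paper's mechanism is entirely different and is the heart of the proof. The hypothesis $r_2-r_1\geq 3d+1$ is not slack for an edge-count; it is exactly what is needed to invoke Lemma~\ref{le:components}, which says a wide minimal band has at most two connected components of protected sites meeting its middle layer. This lemma is used once, to show that $e_1$ (and $-e_1$) cannot have an opposing protected pair in any direction $i\neq 1$: otherwise Lemma~\ref{le:key} manufactures three mutually disconnected protected sets $Q^+,Q^-,T$ in the band. This forces $\pm se_1$ to be protected for all $s\leq r_2-d$, pinning down the axis. Then a substantial inductive claim (Claim~\ref{cl:stabind}) builds the column one level of $k$-canonical sites at a time, repeatedly pairing Lemma~\ref{le:key} with the exact count of Lemma~\ref{le:minCcompat}: each hypothetical stray protected site produces, via two disjoint $C$- and $C'$-compatible families, all $2^{d-i}$ protected sites allowed in some region of a minimal $S_r$, while a known protected site (such as $re_1$) lies in that region but outside both families---contradiction. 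None of these ingredients (Lemma~\ref{le:components}, the axis construction, Lemma~\ref{le:minCcompat}, the inductive claim) appear in your sketch, and the Theorem~\ref{th:configs} machinery alone cannot replace them.
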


At the beginning of the proof of Theorem~\ref{th:stability}, we make crucial use of the following lemma, which says that a wide band of minimal layers can only have two connected components that meet the middle layer of the band. The rather strong condition $r_2-r_1\geq 3d+1$ in the statement of Theorem~\ref{th:stability} comes from this lemma.

\begin{lemma}\label{le:components}
Let $d \geq 2$, $r\geq d$, $s \geq d/2$, and $t \geq r+2d+2s$. Suppose that the origin is protected and that layers $S_r,\dots,S_{r+2d+2s}$ are all minimal. Then the set of uninfected sites in $B_{r+2d+2s} \setminus B_{r-1}$ contains at most~two connected components that meet $S_{r+d+s}$.
\end{lemma}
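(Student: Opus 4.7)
The plan is to argue by contradiction: suppose that three distinct connected components $C_1$, $C_2$, $C_3$ of protected sites in the band $B_{r+2d+2s} \setminus B_{r-1}$ all meet the middle layer $S_{r+d+s}$. The first step (upward extension) is to show that every such component contains a protected vertex in each layer $S_j$ with $r+d+s \leq j \leq r+2d+2s$. For any protected $x \in S_k$, the unprotected out-neighbours of $x$ correspond bijectively to the out-neighbours infected at time $t - \norm{x} - 1$, of which there are at most $d-1$ by the definition of protection; since $x$ has $2d - \mathrm{supp}(x) \geq d$ out-neighbours and $\mathrm{supp}(x) \leq d$, at least $d+1-\mathrm{supp}(x) \geq 1$ of them is protected and therefore lies in the same component as $x$. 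Induction on $k$ yields the claim, so in particular each $C_i$ contains at least $d+s+1$ protected vertices in the upper half of the band.

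The second step (canonical directions) is the crux. For each $C_i$ I would pick a representative $x_i \in C_i \cap S_{r+d+s}$ and apply Lemma~\ref{le:key} with a configuration $\Gamma_i$ aligned with the natural orientation of $x_i$ to produce a large set of protected vertices compatible with $x_i$ above it. I would extract from this a canonical axis $(j_i, \varepsilon_i) \in [d] \times \{-1, +1\}$ along which $C_i$ extends in the band; intuitively, the $d$ extra layers above $S_{r+d+s}$ provided by $s \geq d/2$ give enough room to iterate the Lemma~\ref{le:key} bounds against the hard cap of $\ell_j = 2^d$ protected sites per layer, and so to pin down the axis, mirroring the inductive canonicalisation used in the proof of Theorem~\ref{th:configs}. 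A priori the representatives $x_i$ could have similar sign patterns, so one needs to lift them a few layers up (using Step~1 and applying Lemma~\ref{le:key} iteratively) before a pairwise incompatibility of orientations can be forced; this is where the $d$ layers of buffer in the width of the band are consumed.

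Finally (counting contradiction), given three distinct canonical directions one would choose the configurations $\Gamma_i$ so that their positively and negatively constrained coordinates are pairwise incompatible, making the upward sets $P_k^{\Gamma_i}(x_i)$ for the three components pairwise disjoint at each distance $k$. Summing the Lemma~\ref{le:key} lower bounds across the three components and across the layers $S_{r+d+s}, \dots, S_{r+2d+2s}$ would then yield a total count strictly greater than $\sum_{j=r+d+s}^{r+2d+2s} \ell_j = (d+s+1)\cdot 2^d$, contradicting the minimality of each layer in this range and hence completing the proof. The main obstacle is the structural Step~2: making the ``canonical direction of a component'' into a well-defined element of $[d] \times \{-1, +1\}$ and ruling out the coexistence of three such directions inside a minimal band. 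This is precisely the point at which the strong hypothesis $t \geq r + 2d + 2s$ with $s \geq d/2$ enters the argument, and it explains why a weaker width would not suffice.
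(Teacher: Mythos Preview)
Your Step~2 is where the argument breaks down, and it is not repairable along the lines you sketch. Assigning a well-defined ``canonical axis'' in $[d]\times\{-1,+1\}$ to an arbitrary connected component of protected sites inside a minimal band is essentially the content of Theorem~\ref{th:stability}---and that theorem is proved \emph{using} Lemma~\ref{le:components}. So your proposed Step~2 is circular, and the vague plan of ``lifting representatives a few layers up'' and ``iterating Lemma~\ref{le:key} against the cap $\ell_j=2^d$'' does not produce a rigorous argument without reproducing most of the stability theorem from scratch.

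More importantly, you are working much too hard. The paper's proof is a five-line count with no structural analysis at all. Pick any protected $x\in S_{r+d+s}$ and apply Lemma~\ref{le:key} to $x$ with the trivial configuration $C=(0,\dots,0)$ for each $k=0,1,\dots,d+s$. Since the proof of Lemma~\ref{le:key} builds the sets $P_k^C(x)$ by following chains of protected neighbours, all of these vertices lie in the connected component of $x$; and since $k\leq d+s$ they all lie in the band $B_{r+2d+2s}\setminus B_{r-1}$. Hence each component meeting $S_{r+d+s}$ has size at least
\[
m_{d+s}=\sum_{k=0}^{d+s}\ell_k=(s+1)2^d+d\,2^{d-1}.
\]
On the other hand, the total number of protected sites in the band is at most $(2d+2s+1)2^d$ by minimality of every layer. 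Dividing,
\[
K\leq\frac{(2d+2s+1)2^d}{(s+1)2^d+d\,2^{d-1}}=\frac{2d+2s+1}{s+1+d/2}<3,
\]
so $K\leq 2$. Your Step~1, which extracts only one protected vertex per layer per component, throws away exactly the strength of Lemma~\ref{le:key} that makes this direct count work; this is what forces you into the unworkable Step~2. No canonical directions, no disjointness via polar configurations, and no use of the three-component hypothesis beyond the final inequality are needed.
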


\begin{proof}
Let $x \in S_{r+d+s}$ be an uninfected site. Then $x$ is protected, because $S_{r+d+s}$ is minimal. Since $t-(r+d+s)\geq d+s$, we can apply Lemma~\ref{le:key} to~$x$ with $C=(0,\dots,0)$ and values of~$k$ up to~$d+s$. This gives that the component of uninfected sites containing $x$ inside $B_{r+2d+2s} \setminus B_{r-1}$ has size at least~$m_{d+s}$. Using the identity 
\[
\sum_{r = 0}^{d - 1} \sum_{j = 0}^r \dbinom{d}{j} = d 2^{d - 1}
\]
and the definition of~$m_t$ from~\eqref{eq:minball}, it follows that the size of the component containing $x$ is at least
\[
m_{d+s} = (s+1)2^d + d2^{d-1}.
\]
Do the same for every component of the set of uninfected sites that meets $S_{r+d+s}$. Let $K$ denote the number of components of uninfected sites in $B_{r+2d+2s} \setminus B_{r-1}$ that meet $S_{r+d+s}$. Let $N$ denote the total number of uninfected sites in these components. Then
\begin{equation}\label{eq:healthyLB}
K \big((s+1)2^d + d2^{d-1}\big) \leq N.
\end{equation}
Because these layers are minimal and the origin is protected, it follows from Corollary~\ref{MinProtected} that
\begin{equation}\label{eq:healthyUB}
N \leq (2d+2s+1) 2^d.
\end{equation}
Combining bounds \eqref{eq:healthyLB} and~\eqref{eq:healthyUB} we have
\[
K \leq \dfrac{2d+2s+1}{s+1+d/2} < 3.
\]
But $K$ is an integer, so we must have $K \leq 2$, as claimed.
\end{proof}

We need one more technical lemma before we prove Theorem~\ref{th:stability}.  Given a configuration~$C$, we say that a protected vertex~$x$ is \emph{$C$-supported} if for all~$I \subset [d]$ the vertex~$x - \sum_{i \in I} C(i)e_i$ is protected.

\begin{remark}\label{re:Csupport}
The property of being $C$-supported is `monotone' in the following sense.  Let $C \in \mathcal{C}_d$ and let $x$ be a $C$-supported vertex.  By definition, for all~$i$, $x - C(i)e_i$ is protected.  Let $C^0$ be the configuration obtained from $C$ by changing $C(i)$ to~0; observe that $C \leq C^0$, where $\leq$ is the partial order defined on $\configs$.  Then $x - C(i)e_i$ is $C^0$-supported.
\end{remark}

\begin{lemma}\label{le:minCcompat}
Let $r \geq d \geq 2$ and suppose that $S_r$ is minimal. Let $C \in \mathcal{C}_d$ and let $i=\abs{\pos(C)} + \abs{\NN(C)}$. Let $x$ be a $C$-supported vertex satisfying $r - \norm{x} \geq d - i$.  Then
\[
\bigl\lvert P_{r - \norm{x}}^C(x) \bigr\rvert = 2^{d-i}.
\]
\end{lemma}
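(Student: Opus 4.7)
The plan is to prove matching lower and upper bounds, so that together they force the claimed equality. The lower bound $\abs{P_{r-\norm{x}}^C(x)} \geq 2^{d-i}$ is immediate from Lemma~\ref{le:key}: the configuration $C$ has $a := d-i$ free coordinates, and by hypothesis $k := r - \norm{x} \geq d-i = a$, so Lemma~\ref{le:key} gives $\abs{P_k^C(x)} \geq \sum_{j=0}^{k} \binom{a}{j} = 2^a = 2^{d-i}$. For the matching upper bound, the strategy is to exhibit $2^i$ pairwise disjoint subsets of $P(S_r)$, each of size at least $2^{d-i}$, one of which is $P_k^C(x)$. The key numerical coincidence is that $r \geq d$ forces $\ell_r = \sum_{j=0}^d \binom{d}{j} = 2^d$, and minimality of $S_r$ combined with Corollary~\ref{MinProtected} yields $\abs{P(S_r)} = 2^d$; so the total budget $2^d = 2^i \cdot 2^{d-i}$ exactly matches, forcing equality in every piece.

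To build the pieces, for each subset $I \subset \pos(C) \cup \NN(C)$ (of which there are $2^i$) I set $x_I := x - \sum_{l \in I} C(l) e_l$, which is protected by $C$-supportedness, and I define the flipped configuration $C^I \in \configs$ by $C^I(l) := -C(l)$ for $l \in I$ and $C^I(l) := C(l)$ for $l \notin I$. The identity $x_I - \sum_{j \in J} C^I(j) e_j = x_{I \triangle J}$, which follows by a direct coordinate-wise computation, together with the $C$-supportedness of $x$, shows that $x_I$ is $C^I$-supported. Lemma~\ref{le:key} applied to $(x_I, C^I)$ at distance $k_I := r - \norm{x_I}$ then gives $\abs{P_{k_I}^{C^I}(x_I)} \geq 2^{d-i}$, since $C^I$ still has $a = d-i$ free coordinates and $k_I \geq a$. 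Pairwise disjointness of the $2^i$ sets is straightforward: for $I \neq J$ and any $l \in I \triangle J$, the $C^I$-compatibility of a would-be common element $y$ with $x_I$ and its $C^J$-compatibility with $x_J$ impose strictly contradictory sign constraints on $y_l$.

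The main obstacle, and the crux of the argument, is to verify the containment $\bigcup_I P_{k_I}^{C^I}(x_I) \subseteq P(S_r)$ — that is, that every such $y$ has $\norm{y} = r$ exactly (rather than merely $\norm{y} \leq r$, which is automatic from the triangle inequality $\norm{y} \leq \norm{y - x_I} + \norm{x_I} = r$). Equality in the triangle inequality requires, coordinate by coordinate, that $(x_I)_l$ and $y_l - (x_I)_l$ share the same sign (or that one of them vanish), and establishing this needs a careful case analysis that exploits (i) the $C^I$-compatibility of $y$ with $x_I$, (ii) the sign structure of $x$ forced by $C$-supportedness (which in particular controls $\norm{x_I} = \norm{x} - \abs{I}$ and hence $k_I \geq a$), and (iii) the specific choice $k_I = r - \norm{x_I}$. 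Granting this containment,
\[
2^d = 2^i \cdot 2^{d-i} \leq \sum_I \abs{P_{k_I}^{C^I}(x_I)} \leq \abs{P(S_r)} = \ell_r = 2^d,
\]
so every inequality is an equality; in particular, taking $I = \emptyset$ (for which $C^I = C$ and $x_I = x$) yields $\abs{P_k^C(x)} = 2^{d-i}$, as required.
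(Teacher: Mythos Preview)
Your direct decomposition into $2^i$ pieces is morally the unrolled version of the paper's induction on $i$, and the lower bound via Lemma~\ref{le:key} is shared. But the upper-bound half has a genuine gap that cannot be repaired along the lines you indicate.

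First, the claim that $C$-supportedness ``forces the sign structure of $x$'' (and hence $\norm{x_I} = \norm{x} - \abs{I}$) is simply false: $C$-supportedness is a hypothesis about which \emph{sites are protected}, not about the coordinates of $x$. More seriously, the containment $\bigcup_I P_{k_I}^{C^I}(x_I) \subset P(S_r)$ that you flag as the ``main obstacle'' is not merely hard to verify --- it fails in general. Take $d = 2$, $C = (1,0)$, $x = (1,1)$, and let the protected set be the $2$-aligned canonical column with $\varepsilon_1 = 1$. Then $x$ is $C$-supported (since $(0,1)$ is protected), and for $I = \emptyset$ and any $r \geq 3$ the protected site $(1, 3-r)$ belongs to $P^C_{r-2}(x)$ but has norm $r-2$, not $r$. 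The lemma's conclusion $\abs{P^C_{r-2}(x)} = 2$ nonetheless holds here, so the statement is fine but your route to it is blocked: part of the budget genuinely sits outside $S_r$, and the counting $2^i \cdot 2^{d-i} \leq \abs{P(S_r)}$ cannot be assembled.

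The paper's proof sidesteps this by inducting on $i$ and peeling off one constrained direction $j$ at a time. With $C'$ the $j$-polar opposite of $C$ and $C''$ their common parent, one checks directly from $C$-compatibility that $P^C_{r-\norm{x}}(x)$ and $P^{C'}_{r-\norm{x}+1}(x-e_j)$ are disjoint subsets of $P^{C''}_{r-\norm{x}+1}(x-e_j)$; the latter has size exactly $2^{d-i+1}$ by the induction hypothesis at level $i-1$, and since Lemma~\ref{le:key} forces each piece to have size at least $2^{d-i}$, equality is forced. The point is that the containment used is into a set whose size is known \emph{by induction}, not into $P(S_r)$ --- and that containment is immediate, with no control on $\norm{y}$ required.
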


\begin{proof}
For $i = 0$, the argument is very similar to the proof of Corollary~\ref{MinProtected}: simply apply Lemma~\ref{le:key} to~$x$ for $k = 0$,~$1$, $\ldots\,$,~$\norm{x}$. Proceeding by induction on $i$, let $C$ be a configuration with $\abs{\pos(C)} + \abs{\NN(C)} = i$.  Without loss of generality, suppose that $\pos(C) \neq \emptyset$ and that $j \in \pos(C)$.  Observe that, by hypothesis, the vertex~$x - e_j$ is protected.  Let $C'$ be the $j$-polar opposite of~$C$ and let $C''$ be their common parent.  Observe that $\abs{\pos(C'')} + \abs{\NN(C'')} = i - 1$.  We now consider the sets of protected vertices in $S_r$ generated by the vertices $x$ and~$x - e_j$.  Observe that
\[
P^C_{r - \norm{x}}(x), P^{C'}_{r - \norm{x} + 1}(x - e_j) \subset P^{C''}_{r - \norm{x} + 1}(x - e_j),
\]
as well as that
\[P^C_{r - \norm{x}}(x) \cap P^{C'}_{r - \norm{x} + 1}(x - e_j) = \emptyset.
\]
By Lemma~\ref{le:key} and the fact that $r - \norm{x} \geq d - i$, we have $\abs{P_{r-\norm{x}}^{C}(x)}$, $\abs{P_{r-\norm{x} + 1}^{C'}(x - e_j)} \geq 2^{d-i}$.  Also, by Remark~\ref{re:Csupport}, $x - e_j$ is $C''$-supported.  So, by the induction hypothesis, we have
\[
\bigl\lvert P_{r-\norm{x}}^{C}(x) \bigr\rvert + \bigl\lvert P^{C'}_{r - \norm{x} + 1}(x - e_j) \bigr\rvert = \bigl\lvert P^{C''}_{r - \norm{x} + 1}(x - e_j) \bigr\rvert = 2^{d-i+1},
\]
and the result follows.
\end{proof}

Recall that a vertex~$x$ has an opposing pair of protected neighbours if, for some~$j$, both $x + e_j$ and $x - e_j$ are protected. We say that $x$ is \emph{$j$-oriented} if $j$ is the unique coordinate for which both $x + e_j$ and $x - e_j$ are protected.

Here is a sketch of the proof of Theorem~\ref{th:stability}. As usual, we first consider the origin, which must have at least~$d+1$ protected neighbours. Hence it has two opposing protected neighbours, which without loss of generality are $e_1$ and $-e_1$. Next we show that $se_1$ and $-se_1$ are protected for all~$s\leq r_2-d$. We do this inductively: for each~$s$ and any $i>1$, we show that if $se_1$ is protected then it can never be the case that both $se_1+e_i$ and $se_1-e_i$ are protected. It then follows, because $se_1$ has at least~$d$ protected out-neighbours, that $(s+1)e_1$ is protected. How do we show that it is not the case that both $se_1+e_i$ and $se_1-e_i$ are protected? Well, if they are, then for $r_1 \leq r \leq r_2$, Lemma~\ref{le:key} gives $2^{d-2}$ protected sites~$y$ in $S_r$ with $y_1\geq 1$ and $y_i\geq 1$, and a further $2^{d-2}$ protected sites~$z$ in $S_r$ with $z_1\geq 1$ and $z_i\leq -1$. The same lemma applied to~$-e_1$ also gives $2^{d-1}$ protected sites~$w$ in $S_r$ with $w_1\leq -1$. These sets are disjoint, and $S_r$ is minimal, so we have found all of the protected sites in $S_r$. This holds for all~$r$ in the range $r_1\leq r\leq r_2$. Note that this means that there are at least~three components of protected sites that meet $S_{r_1+3d/2}$ in this band of minimal layers, which contradicts the components lemma, Lemma~\ref{le:components}. This is the only point in the proof where we use this lemma.

Knowing that $se_1$ is protected for all~$s \leq r_2 - d$ allows to us to show that we cannot have both $e_2$ and $-e_2$ protected. Let $r_1 \leq r \leq r_2$ and suppose that $e_2$ and $-e_2$ are both protected. Applying Lemma~\ref{le:key} to each of them in turn gives a total of~$2^d$ protected sites in $S_r$, all with second coordinate non-zero. But $re_1$ is also in that layer, and it is also protected, which is a contradiction. This idea of finding all (or as we shall see in a moment, a subset of) the protected sites in $S_r$ and showing that this leads to a contradiction by finding another protected site somewhere else in $S_r$ is one that we shall use repeatedly throughout the proof.

At this stage we know without loss of generality that
\[
P(S_1)=\{e_1,-e_1,e_2,\dots,e_d\},
\]
and also that $se_1$ and $-se_1$ are protected for all~$s\leq r_2-d$. As one would expect, from here we build the column inductively, in this case by induction on the number~$k$ of non-zero coordinates of the site. At each stage of the induction we show three things. First, that all sites with first coordinate zero and other coordinates consisting of~$k$ ones and $d-k-1$ zeros are protected. Second, that all sites above and below these sites are protected. By this we mean that sites obtained by adding or subtracting $se_1$ from one of these sites are also protected. Third, that all of these sites are $1$-oriented (recall that this means that they do not have opposing protected neighbours, except in the first coordinate). All three of these assertions are proved using variations of the same argument. In each case, if the assertion fails then there are always two protected sites, $x$~and~$y$, that differ in one of their coordinates (other than the first) by exactly~$2$. For example, suppose that $x_2=2$ and $y_2=0$. A combination of Lemma~\ref{le:key} and Lemma~\ref{le:minCcompat} tells us exactly how many protected sites there are in $S_r$ which are $C$-compatible with a certain $C$-supported site~$z$, where $z$ is such that $z \leq x$ and $z \leq y$ and $C$ is a suitable configuration, and it also tells us that none of them have (in this example) second coordinate equal to~$1$. The contradiction comes from knowing that in fact there is a protected site in $S_r$ which is $C$-compatible with the origin and has second coordinate equal to~$1$.  Once we have finished building the column, the final step, showing that $P(B_{r_2})$ is semi-canonical, follows easily.

\begin{proof}[Proof of Theorem~\ref{th:stability}]
Once again, we write $P_k$ for $P(S_k)$.  The origin must have at least~$d+1$ protected neighbours, so it must have an opposing pair of protected neighbours. Without loss of generality, suppose that both $e_1=(1, 0, \ldots, 0)$ and $-e_1=(-1, 0, \ldots, 0)$ are protected. We shall show that $se_1$ and $-se_1$ are protected for all~$s\leq r_2-d$. To do this, first we show that neither $e_1$ nor $-e_1$ has an opposing pair of protected neighbours in any direction except~$1$. Suppose for some~$i \neq 1$ that $e_1 + e_i$ and $e_1 - e_i$ are both protected. Define a configuration~$C \in \mathcal{C}_d$ by $C(1)=C(i)=1$ and $C(k)=0$ otherwise. Let $C'$ be the $i$-polar opposite of~$C$, so that $C'(1)=1$, $C'(i)=-1$, and $C'(k)=0$ otherwise. Fix $r$, $r_1 \leq r \leq r_2$.  By applying Lemma~\ref{le:key} to~$e_1 + e_i$ with $C$, we find a set~$Q_r^+$ of~$2^{d-2}$ protected sites~$y$ in $S_r$ with $y_1\geq 1$ and $y_i\geq 1$. Similarly, applying Lemma~\ref{le:key} to~$e_1 - e_i$ with $C'$, we find a set~$Q_r^-$ of~$2^{d-2}$ protected sites~$z$ in $S_r$ with $z_1\geq 1$ and $z_i\leq -1$. In addition, applying Lemma~\ref{le:key} to~$-e_1$ with configuration~$C''$ defined by $C''(1)=-1$ and $C''(k)=0$ otherwise, we find another set~$T_r$ of~$2^{d-1}$ protected sites~$w$ in $S_r$ with $w_1 \leq -1$, for a total of~$2^d$ protected sites in $S_r$. Let
\[
Q^+=\bigcup_{r=r_1}^{r_2} Q_r^+, \quad Q^-=\bigcup_{r=r_1}^{r_2} Q_r^-, \quad \text{and} \quad T=\bigcup_{r=r_1}^{r_2} T_r.
\]
The situation is shown in Figure~\ref{fi:3comps}.

\begin{figure}[ht]
  \centering
  \begin{tikzpicture}[>=latex]
    \path [name path=C2,fill=gray!50] (0,0) circle (3);
    \path [name path=C1,fill=white] (0,0) circle (2);
    \path [name path=L1] (0,-0.4) -- (-3,-0.4);
    \path [name path=R1] (0,-0.4) -- (3,-0.4);
    \path [name path=L2] (-0.4,0.4) -- (-3,0.4);
    \path [name path=R2] (0.4,0.4) -- (3,0.4);
    \path [name intersections={of=C2 and L1,by=P1}];
    \path [name intersections={of=C2 and R1,by=Q1}];
    \path [name intersections={of=C2 and L2,by=P2}];
    \path [name intersections={of=C2 and R2,by=Q2}];
    \path [name path=U1] (-0.4,0.4) -- (-0.4,3);
    \path [name path=U2] (0.4,0.4) -- (0.4,3);
    \path [name intersections={of=C2 and U1,by=R1}];
    \path [name intersections={of=C2 and U2,by=R2}];
    \path [fill=white] (-0.4,0.4) -- (-0.4,3) -- (0.4,3) -- (0.4,0.4) -- (3,0.4) -- (3,-0.4) -- (-3,-0.4) -- (-3,0.4) -- (-0.4,0.4);
    \draw (0,0) circle (3);
    \draw (0,0) circle (2);
    \draw (P1) -- (Q1) (P2) -- (-0.4,0.4) (0.4,0.4) -- (Q2);
    \draw (-0.4,0.4) -- (R1) (0.4,0.4) -- (R2);
    \draw [dashed,->] (-3,0) -- (3.8,0) node [right] {$e_i$};
    \draw [dashed,->] (0,0) -- (0,3.8) node [above] {$e_1$};
    \node [circle,fill,inner sep=0,minimum size=0.1cm] at (0,0) {};
    \draw (0.4,0.4) node [circle,fill,inner sep=0,minimum size=0.1cm] {} -- (0.8,1) -- (3.5,1) node [right] {$e_1+e_i$};
    \draw (-0.4,0.4) node [circle,fill,inner sep=0,minimum size=0.1cm] {} -- (-0.8,1) -- (-3.5,1) node [left] {$e_1-e_i$};
    \draw (0,-0.4) node [circle,fill,inner sep=0,minimum size=0.1cm] {} -- (0.4,-1) -- (3.5,-1) node [right] {$-e_1$};
    \node at (50:2.5) {$Q^+$};
    \node at (130:2.5) {$Q^-$};
    \node at (-90:2.5) {$T$};
    \path [name path=A1] (0,2.5) -- (-3,2.5);
    \path [name intersections={of=C2 and A1,by=S1}];
    \path [name path=A2] (0,-1.5) -- (-3,-1.5);
    \path [name intersections={of=C1 and A2,by=S2}];
    \draw (S1) -- (-3,2.5) node [left] {$S_{r_2}$};
    \draw (S2) -- (-3,-1.5) node [left] {$S_{r_1}$};
  \end{tikzpicture}
  \caption{The inner circle is $S_{r_1}$ and the outer circle is $S_{r_2}$. The horizontal dashed line is the hyperplane~$x_1=0$ and the vertical dashed line is the half-hyperplane~$x_i=0$, $x_1\geq 0$. The protected sets $Q^+$,~$Q^-$, and~$T$ are subsets of the grey regions shown. They must form at least~three connected components of protected sites because they are separated by the sites on the dashed lines.}\label{fi:3comps}
\end{figure}
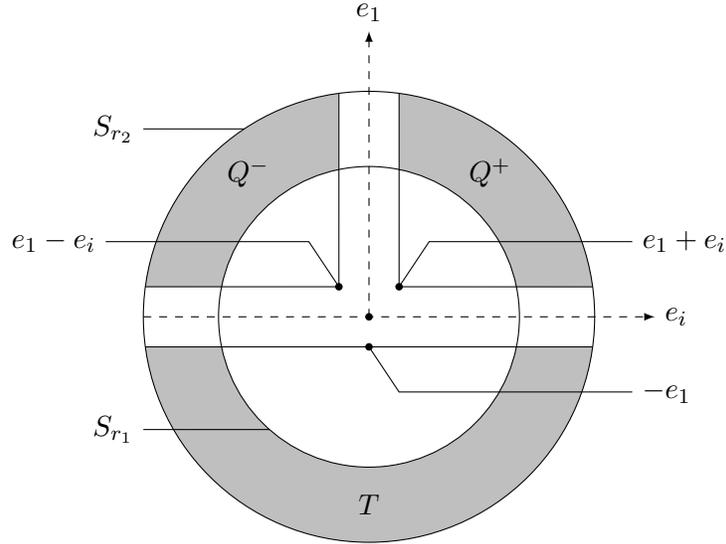

By construction, the three sets of protected sites $Q^+$,~$Q^-$, and~$T$ are mutually disconnected, so there are at least~three components of protected sites that intersect $P(S_{r_1 + 3d/2})$, contradicting Lemma~\ref{le:components}. This proves that $e_1$ does not have an opposing pair of protected neighbours in any direction except the first. A similar argument applies to~$-e_1$. Now, because each of $e_1$~and~$-e_1$ must have at least~$d + 1$ protected neighbours in total, it follows that each must have a protected out-neighbour in the first coordinate; that is, both $2e_1$ and $-2e_1$ are protected. Applying the same reasoning iteratively, we conclude that for all~$s$ with $s \leq r_2 - d$, both $se_1$ and $-se_1$ are protected.

Next, we show that the origin is 1-oriented. Assume for the sake of contradiction that there exists $i \neq 1$ such that both $e_i$ and $-e_i$ are protected. Let $C$, $C' \in \mathcal{C}_d$ denote the positive and negative $i$-child of the configuration~$(0, \dots, 0)$ respectively. Then, applying Lemma~\ref{le:key} to~$e_i$ with $C$ and to~$-e_i$ with $C'$, we find that $S_r$ contains at least~$2^{d-1}$ protected vertices~$y$ with $y_i \geq 1$ and at least~$2^{d-1}$ protected vertices~$y$ with $y_i \leq -1$, for all~$r_1+1\leq r\leq r_2$. However, the minimality of~$S_r$ means that these are the only protected vertices in $S_r$, and hence $S_r$ contains no protected vertices~$y$ with $y_i = 0$. This contradicts the fact that $re_1$ is protected. So without loss of generality, let $P_1 = \{e_1, -e_1, e_2, \dots, e_d\}$.

We continue to build the column inductively by showing that every vertex in the column with $k$ ones and $d-k-1$ zeros among its last $d-1$ coordinates is protected. This is proved in Claim~\ref{cl:stabind}, which takes up most of the remainder of the proof. Once we have the claim, we observe that it follows immediately that $P_r$ is canonical for all~$r_1\leq r\leq r_2-d$, and we note that one can follow the proof of Theorem~\ref{th:configs} to show that $P_{r_2-d+1}, \dots, P_{r_2-1}$ are canonical and that $P_{r_2}$ is semi-canonical.

We say that $x \in S_k$ is \emph{$k$-canonical} if $x_1 = 0$ and $x_i$ is either $0$~or~$1$ for all~$i>1$.

\begin{claim}\label{cl:stabind}
Let $k \geq 1$ and let $x \in S_k$ be a $k$-canonical site. Then $x$ is protected and 1-oriented, and $se_1 + x$ and $-se_1 + x$ are protected for all~$s\leq r_2-d$.
\end{claim}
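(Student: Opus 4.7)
The plan is to prove Claim~\ref{cl:stabind} by induction on $k$, establishing its three conclusions (that $x$ is protected, that $x$ is $1$-oriented, and that $\pm s e_1 + x$ is protected for all $s \leq r_2 - d$) in turn at each inductive step. The base of the induction is the case $k = 0$, namely the origin, already handled in the opening paragraphs of the proof of Theorem~\ref{th:stability}. For the inductive step, write $x = e_{i_1} + \cdots + e_{i_k}$ with distinct $i_l \in [d] \setminus \{1\}$. For every $l \in [k]$, the $(k-1)$-canonical vertex $y = x - e_{i_l}$ is by induction protected, $1$-oriented, and accompanied by the column $\pm s e_1 + y$ of protected sites for $s \leq r_2 - d$; in particular $y$ has exactly $d + 1$ protected neighbours, two of which are $y \pm e_1$ and $k - 1$ of which are the $(k-2)$-canonical sites $y - e_{i_j}$ ($j \neq l$).

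To show $x = y + e_{i_k}$ is protected, I would suppose for contradiction that $y - e_{i_k}$ (rather than $x$) is the protected neighbour of $y$ in direction $i_k$. Applying Lemma~\ref{le:key} to $y - e_{i_k}$ with a configuration negatively constraining $i_k$ (and positively constraining the other nonzero coordinates of $y$) yields many protected sites in a layer $S_r$ with $r_1 \leq r \leq r_2$ that have $i_k$-coordinate $\leq -1$; applying Lemma~\ref{le:key} to $e_{i_k}$ (protected by the $k = 1$ case of the induction) with a configuration positively constraining $i_k$ yields a disjoint family of such sites in $S_r$ with $i_k$-coordinate $\geq 1$. Combined with $\pm r e_1$ and the other canonical sites in $S_r$ guaranteed by the induction, the total exceeds $\ell_r$, contradicting the minimality of $S_r$.

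The same template---produce two protected sites differing by $2$ in a non-first coordinate, count the protected vertices generated from each via Lemma~\ref{le:key} (using Lemma~\ref{le:minCcompat} on a $C$-supported ancestor $z$ wherever an exact count is needed), and exhibit one more protected site in $S_r$ forbidden by these counts---handles the $1$-orientation of $x$: an opposing pair $x \pm e_j$ for some $j \neq 1$ supplies the two protected sites, and a known canonical protected site with $j$-coordinate in the ``missing'' slot supplies the required excess. Finally, to show $\pm s e_1 + x$ is protected for $s \leq r_2 - d$, one inducts on $s$: given $(s-1)e_1 + x$ protected and $1$-oriented from the previous two parts applied to the shifted data, the neighbour count used for $\pm s e_1$ in the opening paragraphs of the proof of Theorem~\ref{th:stability} applies verbatim and forces $s e_1 + x$ to be the protected neighbour in direction $+e_1$.

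The main obstacle throughout is the careful bookkeeping in each sub-argument. One must choose the configuration $C$ (and, where relevant, the $C$-supported ancestor $z$) so that the extremal counts from Lemma~\ref{le:key} and Lemma~\ref{le:minCcompat} combine with the inductive hypothesis and the already-established canonical column to just barely exceed $\ell_r$ for some $r \in [r_1, r_2]$, using the precise values $\sum_{i=0}^{k}\binom{a}{i}$ and $2^{d-i}$ that those lemmas provide and verifying that the various sets of protected sites extracted are pairwise disjoint.
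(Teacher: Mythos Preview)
Your plan—induction on $k$ with three sub-steps, each proved by contradicting the minimality of some $S_r$ via Lemmas~\ref{le:key} and~\ref{le:minCcompat}—is exactly the paper's approach. But two of your specific implementations do not go through as written.

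In the argument that $x$ is protected, you apply Lemma~\ref{le:key} to $x-2e_{i_k}$ with $i_1,\dots,i_{k-1}$ positively constrained (giving $2^{d-k}$ sites with $i_k$-coordinate $\leq -1$) and to $e_{i_k}$ (giving $2^{d-1}$ sites with $i_k$-coordinate $\geq 1$), and then add the canonical-column sites with $i_k$-coordinate $0$ supplied by the inductive hypothesis. For $k=2$ this totals $3\cdot 2^{d-2}+2(d-1)$, which is at most $2^d=\ell_r$ once $d\geq 5$, so no contradiction arises. The paper avoids this shortfall by picking a \emph{second} index $j\neq i$ with $x_j=1$ and applying Lemma~\ref{le:key} to $x-e_j$ and to $x-2e_i$ with $i$-polar configurations: each yields $2^{d-k+1}$ protected sites $C''$-compatible with the $(k-2)$-canonical vertex $x-e_i-e_j$, where $C''$ has exactly $k-2$ constrained directions; Lemma~\ref{le:minCcompat} says there are \emph{exactly} $2^{d-k+2}$ such sites, so these account for all of them and all have $i$th coordinate nonzero, whence a single known protected column site with $i$th coordinate $0$ gives the contradiction.

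In the column argument you want to induct on $s$, using that $(s-1)e_1+x$ is $1$-oriented ``from the previous two parts applied to the shifted data''. But $(s-1)e_1+x$ is not $k$-canonical (its first coordinate is nonzero), so the previous two parts say nothing about it; and the three-component argument for $se_1$ from the opening of Theorem~\ref{th:stability} relies on Lemma~\ref{le:components} and has no analogue once you leave the $e_1$-axis. The paper instead drops to the $(k-1)$-step: if $se_1+x$ is not protected, pick $i$ with $x_i=1$, use that $se_1+(x-e_i)$ is protected and $1$-oriented (this comes from the $(k-1)$ hypothesis, not from a separate induction on $s$) to deduce that $se_1+x-2e_i$ is protected, and then run the same Lemma~\ref{le:minCcompat} contradiction with $x$ and $se_1+x-2e_i$ as the two sources.
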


\begin{remark}\label{re:kcanonsupport}
As noted above, we prove Claim~\ref{cl:stabind} by induction on $k$.  Fix $k \geq 1$ and suppose that for all~$\ell \leq k$, all $\ell$-canonical vertices are protected.  This means that if $x$ is $k$-canonical and $C \in \mathcal{C}_d$ is such that $\NN(C) = \emptyset$ and $\pos(C) \subset \{j \in [d] : x_j = 1\}$, then $x$ is $C$-supported. This observation will allow us to apply Lemma~\ref{le:minCcompat} at several points in the proof of Claim~\ref{cl:stabind}.
\end{remark}

\begin{proof}[Proof of Claim~\ref{cl:stabind}]
Throughout this proof we take $r$ to be an appropriate radius. To ensure that $S_r$ is minimal and that the hypotheses of Lemma~\ref{le:minCcompat} are satisfied, we set $r = \max\{M+d, r_1\}$, where $M$ is the maximum modulus of any site that we are currently considering.

We have already shown that all $1$-canonical vertices are protected. Let us show that $se_1+e_i$ and $-se_1+e_i$ are protected for all $s\leq r_2-d$ and~$i>1$. We know that $se_1$ does not have any opposing protected out-neighbours, so the claim is that for different values of~$s$ the out-neighbours have the same orientation. Suppose that this is false, and without loss of generality suppose that $se_1-e_2$ is protected. By applying Lemma~\ref{le:key} to~$se_1-e_2$ with $C = (1,-1,0,\dots,0)$, it follows that $S_r$ contains at least~$2^{d-2}$ protected sites~$y$ with $y_1 \geq r$ and $y_2 \leq -1$. Next, by applying Lemma~\ref{le:key} to~$e_2$ with $C' = (1,1,0,\dots,0)$, it follows that $S_r$ contains at least~$2^{d-2}$ protected sites~$z$ with $z_1 \geq 0$ and $z_2 \geq 1$. Let $C''$ denote the common parent of $C$~and~$C'$.  We have assumed without loss of generality that $-e_1 \in  P_1$, which means that the origin is $C''$-supported. Hence, by the minimality of~$S_r$, Lemma~\ref{le:minCcompat} applied to the origin says that there are exactly~$2^{d-1}$ protected sites~$w$ in $S_r$ with $w_1\geq 0$. Therefore there are no protected sites~$w$ in $S_r$ with $w_2=0$, contradicting the fact that $re_1$ is protected.

Next, we must show that the $1$-canonical vertices are $1$-oriented. First, we show that $2e_i$ is not protected for any $i>1$. Suppose without loss of generality that $2e_2$ is protected. Apply Lemma~\ref{le:key} to~$2e_2$ with $C=(0,1,0,\dots,0)$ to obtain $2^{d-1}$ protected vertices~$y$ in $S_r$ with $y_2\geq 2$. Then apply Lemma~\ref{le:key} again, this time to the origin with $C'=(0,-1,0,\dots,0)$, to obtain $2^{d-1}$ vertices~$z$ in $S_r$ with $z_2<0$. By minimality, together these sites form all of the protected sites in $S_r$, contradicting the fact that $(r-1)e_1+e_2$ is protected. Second, we show that we never have $e_i+e_j$ and $e_i-e_j$ both protected for distinct $i,j>1$. Suppose that this is false, and without loss of generality suppose that $e_2+e_3$ and $e_2-e_3$ are both protected. We apply Lemma~\ref{le:key} to~$e_2+e_3$ with $C=(0,1,1,0,\dots,0)$ and then to~$e_2-e_3$ with $C'=(0,1,-1,0,\dots,0)$ to obtain two disjoint sets of~$2^{d-2}$ protected sites~$y$ in $S_r$, all with $y_2\geq 1$ and $y_3\neq 0$. Define $C''$ to be the common parent of $C$~and~$C'$ and note that $e_2$ is $C''$-supported.  By applying Lemma~\ref{le:minCcompat} to~$e_2$, there are a total of exactly~$2^{d-1}$ protected sites~$y$ in $S_r$ with $y_2\geq 0$. But $(r-1)e_1+e_2$ is also protected, and this is a contradiction.

We have now proved the claim for $k = 1$. Proceeding by induction on $k$, let $x$ be a $k$-canonical site. First, we show that $x$ is protected. Then we show that $se_1+x$ and $-se_1+x$ are protected for all~$s\leq r_2-d$. Finally, we show that $x$ is $1$-oriented.

We begin by showing that $x$ is protected. Suppose not and choose any $i$ such that $x_i=1$. Then $x-e_i$ is $(k-1)$-canonical, and hence $1$-oriented. It has at least~$d+1$ protected neighbours, of which only $(x-e_i)+e_1$ and $(x-e_i)-e_1$ are opposing. Therefore, it has exactly~one protected neighbour in each of the last $d-1$ coordinates, and exactly~$k-1$ of those are in-neighbours. Therefore, $x-2e_i$ is protected. Since $k\geq 2$, we can also choose $j\neq i$ such that $x_j=1$. Notice that $x-e_j$ is also $(k-1)$-canonical. Now, define a configuration~$C$ by
\begin{equation*}
C(\ell) =
	\begin{cases}
	x_{\ell}, & \ell \neq j, \\
	0, 				& \ell = j.
	\end{cases}
\end{equation*}
Let $C'$ be the $i$-polar opposite of~$C$ and let $C''$ be their common parent. Apply Lemma~\ref{le:key} to~$x-e_j$ with $C$ to obtain $2^{d-k+1}$ protected sites in $S_r$ which are $C$-compatible with~$x-e_j$. Next, apply Lemma~\ref{le:key} to~$x-2e_i$ with $C'$ to obtain $2^{d-k+1}$ protected sites in $S_r$ which are $C'$-compatible with~$x-2e_i$. These two sets of sites are disjoint and all have $i$th coordinate equal to~zero, and furthermore, since $C''$ is the common parent of $C$~and~$C'$, they are all $C''$-compatible with~$x - e_i - e_j$. Moreover, by the induction hypothesis and Remark~\ref{re:kcanonsupport}, $x - e_i - e_j$ is $C''$-supported.  Hence, by Lemma~\ref{le:minCcompat} there are a total of exactly~$2^{d-k+2}$ protected sites in $S_r$ which are $C''$-compatible with~$x - e_i - e_j$, so we have found all of them. But $re_1$ is in $S_r$, is protected, and is $C''$-compatible with the origin, and it is not among our sites, which is a contradiction.

Next, we must show that $se_1+x$ and $-se_1+x$ are protected for all~$s\leq r_2-d$. The argument is almost identical to the one in the previous paragraph. Suppose that $se_1+x$ is not protected and let $i>1$ be such that $x_i=1$. Then because $se_1+x-e_i$ is protected and $1$-oriented, the site~$se_1+x-2e_i$ is protected. Define a configuration~$C$ by $C(j)=x_j$ for all~$j$, let $C'$ be the $i$-polar opposite of~$C$, and let $C''$ be their common parent. Applying Lemma~\ref{le:key} to~$x$ with $C$ and to~$se_1+x-2e_i$ with $C'$ gives two disjoint sets of~$2^{d-k}$ protected vertices in $S_r$ which are $C''$-compatible with~$x - e_i$, the first with $i$th coordinate at least~$1$, and the second with $i$th coordinate at most~$-1$. Lemma~\ref{le:minCcompat} says that there are exactly~$2^{d-k+1}$ protected vertices in $S_r$ which are $C''$-compatible with~$x - e_i$. However, the protected site~$re_1$ is in $S_r$ and is $C''$-compatible with~$x - e_i$, and it has $i$th coordinate~zero, a contradiction.

Finally, we show that $x$ is $1$-oriented. Again, the argument is almost identical to before. Suppose for some $i \neq 1$ that both $x + e_i$ and $x - e_i$ are protected. First, suppose further that $x_i=1$. Define a configuration~$C$ by $C(j)=x_j$ for all~$j$, let $C'$ be the $i$-polar opposite of~$C$, and let $C''$ be their common parent. Applying Lemma~\ref{le:key} to~$x+e_i$ with $C$ and to~$x-e_i$ with $C'$, we obtain two disjoint sets of~$2^{d-k}$ protected vertices in $S_r$ which are $C''$-compatible with~$x$, in the first case with $i$th coordinate at least~$2$ and in the second with $i$th coordinate at most~$0$. By minimality, Lemma~\ref{le:minCcompat} says that there are exactly~$2^{d-k+1}$ protected sites in $S_r$ which are $C''$-compatible with~$x$. But $(r-1)e_1+e_i$ is also protected, which is a contradiction.

Second, we suppose instead that $x_i=0$. This time, we define the configuration~$C$ by $C(i)=1$ and $C(k)=x_k$ otherwise, and, as usual, $C'$ is the $i$-polar opposite of~$C$ and $C''$ is their common parent. Then apply Lemma~\ref{le:key} to~$x+e_i$ with $C$ and to~$x-e_i$ with $C'$ to obtain two disjoint sets of~$2^{d-k-1}$ protected vertices in $S_r$ which are all $C''$-compatible with~$x$, and obtain a contradiction from Lemma~\ref{le:minCcompat} and the protected site~$re_1$. This proves the claim.
\end{proof}

Now we shall show that $P_r$ is canonical for all~$r \leq r_1 - 1$.  Let $r = r_1 - 1$.  If $x$ is not of the form~$\pm (r - k)e_1 + y$, where $y$ is a $k$-canonical vertex, then the fact that $P_{r_1}$ is canonical means that $x$ has no protected out-neighbours, which means that $x$ is not protected.  Thus, $P_{r_1 - 1}$ is canonical.  Iterating this argument shows that for all~$r \leq r_1 - 1$, $P_r$ is canonical.

We have proved that for all~$r\leq r_2-d$, $P_r$ is canonical.  To show that $P_r$ is canonical for $r = r_2-d+1,\dots,r_2-1$, we imitate the proof of Theorem~\ref{th:configs}. Since layers $r_2 - d + 1$, $\ldots\,$,~$r_2$ are all minimal, it follows by induction that $P_{r_2-d+1},\dots,P_{r_2-1}$ must be canonical and that $P_{r_2}$ must be semi-canonical.
\end{proof}

\section{Proofs of main results}\label{se:proofs}

Now that we have all of the necessary combinatorial tools, we start building up to the proofs of Theorems \ref{th:crittime} and~\ref{th:conc}. Let $E_t(x)$ be the event that a site~$x$ is uninfected at time~$t$, and let $F_t(x)$ be the indicator random variable for $E_t(x)$. The sequence of random variables that we are interested in is $(F_t(n))_{n=1}^\infty$, where $F_t(n) = \sum_{x \in V(\T_n^d)} F_t(x)$. The mean of~$F_t(n)$ is $\E F_t(n):=\lambda_n:=n^d \rho_1$, where
\[
\rho_1 = \P_{p_n}\bigl(E_t(x)\bigr).
\]

Most of this section is devoted to proving the following Poisson convergence result, from which Theorems \ref{th:crittime} and~\ref{th:conc} will follow easily.   Because we are mainly interested in uninfected sites, rather than infected sites, we shall often work with $q = 1 - p$ instead of with $p$.

\begin{theorem}\label{th:poisson}
Let $t=o(\log n/\log\log n)$ and let $p_n$ be such that $q_n = 1 - p_n \leq C n^{-d/m_{t,d}}$. Then
\[
\dTV\bigl(F_t(n), \Po(\lambda_n)\bigr) = O\bigl(t^d q_n \bigr) = o(1).
\]
\end{theorem}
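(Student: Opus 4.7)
The plan is to apply the Barbour--Eagleson version of the Stein--Chen method (Theorem \ref{th:steinchen}) to the family of Bernoulli indicators $\{F_t(x) : x \in V(\T_n^d)\}$. Since $F_t(x)$ is a deterministic function of the initial states inside $B_t(x)$, the natural dependency neighbourhood is $N_x := B_{2t}(x)$, of size $\abs{N_x} = O(t^d)$, and the theorem then bounds
\[
\dTV\bigl(F_t(n), \Po(\lambda_n)\bigr) \leq \min\bigl\{1, \lambda_n^{-1}\bigr\}\left( n^d\abs{N_x}\rho_1^2 + \sum_{x} \sum_{y \in N_x \setminus \{x\}} p_{xy}\right),
\]
where $p_{xy} := \P_{p_n}\bigl(F_t(x) = F_t(y) = 1\bigr)$ and I have used translation-invariance of $\T_n^d$ to collapse the first double sum.

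The ``product'' term is handled by the elementary monotonicity $\rho_1 \leq q_n$ (a site uninfected at time~$t$ is in particular initially uninfected). In both regimes $\lambda_n \leq 1$ and $\lambda_n > 1$, the factor $\min\{1, \lambda_n^{-1}\}$ combines with $n^d \abs{N_x}\rho_1^2$ to give $O(t^d \rho_1) = O(t^d q_n)$, which is already the target bound.

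The main obstacle is the mixed term $\sum_x \sum_{y \in N_x \setminus \{x\}} p_{xy}$. To match the target one needs $p_{xy} = O(q_n^{m_t + 1})$, uniformly in pairs $x \neq y$ with $\norm{x - y} \leq 2t$, so that summing over $O(n^d t^d)$ such pairs and dividing by $\lambda_n \asymp n^d q_n^{m_t}$ again produces $O(t^d q_n)$. That joint protection costs at least one extra uninfected site is a consequence of the extremal theory of Section \ref{se:extremal}: if $x$ and $y$ were both uninfected at time $t$ and $U$ denotes the initially uninfected set, Corollary \ref{MinProtected} gives $\abs{U \cap B_t(x)}, \abs{U \cap B_t(y)} \geq m_t$; if the union contained only $m_t$ sites, the two protected sets would coincide inside $B_t(x) \cap B_t(y)$, forcing both to be semi-canonical about their own centre by Theorem \ref{th:configs}, and the $\ell_1$-geometry of a column (whose two extreme vertices lie at distance $t$ from its centre) then forces $x = y$, a contradiction.

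To promote this one-extra-site heuristic into the uniform quantitative bound $p_{xy} = O(q_n^{m_t+1})$ (and likewise, in the course of the proof, to pin down $\rho_1 = (1 + o(1))\, d^3 2^{d-1} q_n^{m_t}$ via Corollary \ref{NumExtConfigs}), I would invoke the stability theorem \ref{th:stability}. It ensures that configurations of size $m_t + k$ that protect a given site are essentially canonical columns with $k$ extra sites inserted, so the number of such configurations is $t^{O(k)}$ rather than the trivial $t^{O(tk)}$; summing the resulting geometric series $\sum_{k \geq 0} t^{O(k)} q_n^{m_t + 1 + k}$ yields $p_{xy} = O(q_n^{m_t+1})$ with a constant depending only on~$d$. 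Feeding these estimates back into Stein--Chen produces the claimed $O(t^d q_n)$ bound, and $t^d q_n = O\bigl(t^d n^{-d/m_t}\bigr) = o(1)$ follows from $q_n \leq C n^{-d/m_t}$ together with $m_t \sim 2^d t$ and $t = o(\log n / \log\log n)$. The delicate step is precisely the uniform $O(q_n^{m_t+1})$ estimate on $p_{xy}$, where the stability result (rather than the mere minimum count) does the real work.
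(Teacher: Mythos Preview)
Your proposal is correct and follows essentially the same route as the paper's proof: apply Theorem~\ref{th:steinchen} with the local dependency neighbourhood $N_x$ of size $O(t^d)$, control the diagonal term via $\rho_1 \leq q_n$, and control the off-diagonal term by combining Lemma~\ref{le:minunion} (the one-extra-site fact) with the stability-based count $g_t(k)=t^{O(k)}$ to obtain $p_{xy}=O(q_n^{m_t+1})$, exactly as in Theorem~\ref{th:rho1bound} and Lemma~\ref{le:pxyBound}. The only cosmetic differences are that the paper takes $N_x=B_{2t+1}(x)$ rather than $B_{2t}(x)$ and quotes $\rho_1=o(q_n)$ from Theorem~\ref{th:rho1bound} instead of your direct observation $\rho_1\leq q_n$; neither affects the argument.
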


Our first task is to estimate $\rho_1$. To do this, we make use of the stability result of the previous section to bound the number of configurations of~$m_t+k$ uninfected sites that protect a given site.

For the variance, we shall need to estimate the probability that both $x$ and $y$ are uninfected at time~$t$ when $x$ and $y$ are close enough for these events to be dependent. For this, first we need a lemma which says that $m_t$ uninfected sites are not enough to protect two distinct sites. In other words, if $x$ and $y$ are distinct protected sites then $\abs{P(B_t(x))\cup P(B_t(y))} \geq m_t + 1$. We then use this together with the stability result to bound the quantity
\[
\rho_2 = \max \, \big\{\P_{p_n}\big(E_t(x)\cap E_t(y)\big) \, : \, \lVert x-y\rVert \leq 2t\big\}.
\]

Once we have these bounds on $\rho_1$ and $\rho_2$, the proof of Theorem~\ref{th:poisson} will be just a few lines.

Throughout this section, all constants, either explicit or implied by the $O(\cdot)$ notation, will be quantities that depend only on~$d$.

\begin{lemma}\label{le:ratiobound}
Let $t=o(\log n/\log\log n)$ and let
\begin{equation}\label{eq:pbound}
q = 1 - p \leq C n^{-d/m_{t,d}}
\end{equation}
for some $C>0$. Then for any constant~$c>0$,
\[
t^c q \leq \exp\bigl(-\Omega(\log\log n)\bigr).
\]
\end{lemma}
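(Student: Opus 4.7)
The plan is to take logarithms of both sides and reduce the statement to an easy asymptotic comparison between $\log t$ and $(\log n)/m_{t,d}$.

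First I would get a crude upper bound on $m_{t,d}$. Directly from \eqref{eq:minball}, since each inner sum $\sum_{j=0}^r \binom{d}{j}$ is bounded by $2^d$, one has $m_{t,d} \leq 2^d(t+1)$; the constant here is unimportant, only that $m_{t,d} = O(t)$ with an implicit constant depending on~$d$.

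Next I would take the logarithm of the inequality $t^c q \leq C\,t^c n^{-d/m_{t,d}}$ provided by hypothesis~\eqref{eq:pbound}, obtaining
\[
\log(t^c q) \leq c\log t + \log C - \frac{d\log n}{m_{t,d}}.
\]
Using the bound on $m_{t,d}$, the last term is at most $-\dfrac{d\log n}{2^d(t+1)}$. The hypothesis $t = o(\log n/\log\log n)$ then gives $(\log n)/(t+1) = \omega(\log\log n)$, so this dominant negative term has absolute value $\omega(\log\log n)$. On the other hand, $t \leq \log n$ (certainly, for large~$n$), and so $c\log t = O(\log\log n)$, while $\log C$ is a constant. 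Combining these estimates yields $\log(t^c q) \leq -\omega(\log\log n)$, which is in particular $-\Omega(\log\log n)$, and exponentiating gives the claim.

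There is no real obstacle: the statement is essentially a book-keeping lemma, and the only thing to notice is that the extra polynomial-in-$t$ factor $t^c$ cannot defeat the super-polylogarithmic gain $n^{d/m_{t,d}}$ coming from~$q$, precisely because $m_{t,d}$ grows only linearly in~$t$ while $t$ itself is asymptotically smaller than $\log n / \log\log n$.
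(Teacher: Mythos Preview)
Your argument is correct and is essentially identical to the paper's own proof: both take logarithms, use the linear bound $m_{t,d}=O(t)$ (the paper cites $m_{t,d}\leq t2^d$ from Corollary~\ref{MinProtected}, you derive $m_{t,d}\leq 2^d(t+1)$ directly from~\eqref{eq:minball}), and then observe that the negative term $-d(\log n)/m_{t,d}$ is of order $\omega(\log\log n)$ while $c\log t$ is only $O(\log\log n)$. The only cosmetic difference is that the paper writes $t=\log n/(\omega(n)\log\log n)$ explicitly before substituting, whereas you phrase the same estimate as $(\log n)/(t+1)=\omega(\log\log n)$.
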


We shall only ever need the corollary $t^c q = o(1)$.

\begin{proof}
By~\eqref{eq:pbound} we have
\[
\log q = \log(1-p) \leq \log C - \dfrac{d}{m_{t,d}}\log n.
\]
It follows that
\[
t^c q \leq \exp\Bigl( c\log t - \dfrac{d}{m_{t,d}}\log n + \log C \Bigr).
\]
Let $t=\log n/\omega(n) \log\log n$ for some function~$\omega(n)\rightarrow\infty$. By Corollary~\ref{MinProtected}, $m_{t,d} \leq t2^d$. Putting these into the last inequality we obtain
\[
t^c q \leq \exp\Bigl( c\log\log n - \dfrac{d}{2^d} \omega(n) \log\log n + \log C \Bigr),
\]
which is certainly enough to prove the lemma.
\end{proof}

We could have replaced the constant~$C$ in the above lemma by a function as large as~$\log n$, but we shall not need that in the applications that follow.

Now we determine up to a factor of~$1+o(1)$ the probability $\rho_1$ that a site is uninfected at time~$t$.

\begin{theorem}\label{th:rho1bound}
Let $t=o(\log n/\log\log n)$ and let $p$ satisfy~\eqref{eq:pbound}. Then
\begin{equation}\label{eq:qest}
\rho_1 = \bigl(1+o(1)\bigr) d^3 2^{d-1} q^{m_{t,d}}.
\end{equation}
\end{theorem}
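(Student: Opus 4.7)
My plan is to expand $\rho_1$ as a sum over protecting subsets of the ball $B_t := B_t(0)$, identify the leading contribution using Corollary~\ref{NumExtConfigs}, and control the near-minimal contributions via the stability theorem.

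Since the event that $0$ is uninfected at time~$t$ depends only on the initial states of sites in~$B_t$, conditioning on the set $U \subset B_t$ of initially uninfected sites in $B_t$ gives
\[
\rho_1 \;=\; \sum_{U \subset B_t} \mathbf{1}\bigl[U \text{ protects } 0 \text{ at time } t\bigr] \, q^{|U|} p^{|B_t|-|U|}.
\]
Let $N_k$ denote the number of protecting $U \subset B_t$ with $|U| = m_t + k$. By Corollary~\ref{MinProtected} we have $N_k = 0$ for $k<0$, so
\[
\rho_1 \;=\; q^{m_t} \, p^{|B_t|-m_t} \sum_{k\geq 0} N_k (q/p)^k.
\]
Using $|B_t| = O(t^d)$ together with Lemma~\ref{le:ratiobound} (applied with $c=d$), the prefactor satisfies $p^{|B_t|-m_t} = 1 + O(qt^d) = 1 + o(1)$, and $(q/p)^k = q^k(1 + O(q))^k$. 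Corollary~\ref{NumExtConfigs} then gives $N_0 = d^3 2^{d-1}$, which supplies the leading term $d^3 2^{d-1} q^{m_t}$. It therefore remains to prove that $\sum_{k \geq 1} N_k q^k = o(1)$.

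To bound $N_k$ I would combine the layerwise minimum $|P(S_r)| \geq \ell_r$ from Corollary~\ref{MinProtected} with Theorem~\ref{th:stability}. Any protecting $U$ with $|U|=m_t+k$ distributes its total excess of~$k$ over the $t+1$ layers of~$B_t$, so for $k$ small relative to~$t$ a pigeonhole argument produces a band of $3d+1$ consecutive layers on which $U$ is layerwise minimal. On this band, Theorem~\ref{th:stability} forces $U$ to be semi-canonical, and once the alignment direction ($d$ choices), the orientations ($2^{d-1}$ choices), and the semi-canonical skeleton are fixed, the remaining sites of~$U$ consist of $O(k)$ deviations, each of which has $O(t^d)$ possible positions. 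This yields a bound of the form
\[
N_k \;\leq\; C_1 (C_2 t)^{C_3 k}
\]
for constants $C_1,C_2,C_3$ depending only on~$d$. Lemma~\ref{le:ratiobound} ensures $(C_2 t)^{C_3} q = o(1)$, so $\sum_{k\geq 1} N_k q^k \leq C_1 \sum_{k\geq 1}\bigl[(C_2 t)^{C_3} q\bigr]^k = o(1)$, as required.

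The main obstacle is the enumeration of $N_k$ for $k \geq 1$. Theorem~\ref{th:stability} was proved under the strong hypothesis that a whole band of layers is minimal, so to obtain the polynomial-in-$t$ bound above one must supply a pigeonhole step forcing such a band to exist and then give a careful accounting of the $O(k)$ deviations from the semi-canonical skeleton. The regime in which $k$ grows large enough that no $(3d{+}1)$-wide minimal band is guaranteed will need a separate crude bound, stratified by the number of non-minimal layers; but there $q^k$ is already exponentially small in~$k$, so an argument of this kind should absorb the weaker combinatorial control.
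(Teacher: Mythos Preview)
Your proposal is correct and follows essentially the same route as the paper: expand $\rho_1$ over protecting subsets, use Corollary~\ref{NumExtConfigs} for the leading term, bound $N_k$ (the paper's $g_t(k)$) by $t^{O(k)}$ via Theorem~\ref{th:stability}, and finish with Lemma~\ref{le:ratiobound}. The one refinement in the paper is that no case split on the size of $k$ is needed: since there are at most $k$ non-minimal layers, one marks each of them together with the $3d$ layers above it, notes that the total number of uninfected sites in these $\leq (3d{+}1)k$ layers is at most $2^d(3d{+}1)k + k = O(k)$, places those arbitrarily (giving $t^{O(k)}$ choices), and observes that the outermost remaining band of $\geq 3d{+}1$ consecutive minimal layers fixes \emph{all} of $P(B_{r_2})$ by Theorem~\ref{th:stability}; when $(3d{+}1)k \geq t$ the marked layers already exhaust $B_t$ and the bound $t^{O(k)}$ still holds because then $m_t + k = O(k)$.
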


\begin{proof}
We define $g_t(k)$ to be the number of arrangements of~$m_t+k$ uninfected sites in $B_t$ such that the origin is protected. Summing over $k$ we obtain
\begin{equation}\label{eq:failbound}
\rho_1 = \sum_{k=0}^{\abs{B_t} - m_t} g_t(k) p^{\abs{B_t} - m_t - k} q^{m_t + k}.
\end{equation}

We need to bound $g_t(k)$. The stability theorem, Theorem~\ref{th:stability}, says that if there are $3d + 1$ consecutive minimal layers, then the uninfected sites in these layers are part of a semi-canonical set. However, if there are at most~$3d$ consecutive minimal layers, then Lemma~\ref{le:components}, and hence Theorem~\ref{th:stability}, does not hold, and the results of Section~\ref{se:extremal} do not tell us anything about the structure of the uninfected sites in these layers.

There are at most~$k$ non-minimal layers. In the worst case, there are exactly~$k$ non-minimal layers, and they are all far apart. In this case, we place uninfected vertices in each of these layers arbitrarily, as well as in the $3d$ layers following each non-minimal layer. This means that we have placed uninfected sites arbitrarily in at most~$(3d + 1)k$ layers. There are at most~$2^d (3d + 1)k + k$ total uninfected sites in these layers. Each layer has at most~$\abs{S_t} \leq c_1t^{d - 1}$ vertices, so the number of ways of placing the uninfected sites is at most
\[
\dbinom{c_1 t^{d - 1}}{2^d(3d + 1)k + k} \leq (c_1 t^{d - 1})^{2^d(3d + 1)k + k} = t^{O(k)}.
\]
It is important here that the exponent on the right-hand side does not depend on~$t$.

All of the layers whose uninfected vertices we have not yet placed are minimal and are contained in bands of at least~$3d + 1$ consecutive minimal layers. Consider the outer-most band; say this is the range $r_1\leq r\leq r_2$. By Theorem~\ref{th:stability}, $P(B_{r_2})$ must be semi-canonical. In particular, if $r<r_2$ is such that $S_r$ is minimal, then $P(S_r)$ is canonical with fixed alignment and orientations. Hence, by Corollary~\ref{NumExtConfigs}, there are at most~$d^3 2^{d-1}$ ways to place the rest of the uninfected sites. In fact, all that we shall use is that this quantity is~$O(1)$. We have thus shown that
\begin{equation}\label{eq:gtkBound}
g_t(k) = O\bigl(t^{O(k)}\bigr).
\end{equation}
Putting this together with~\eqref{eq:failbound} gives
\begin{align*}
\rho_1 &= g_t(0)p^{\abs{B_t}-m_t}q^{m_t} \Bigg( 1 + \sum_{k=1}^{\abs{B_t}-m_t} \dfrac{g_t(k)}{g_t(0)} p^{-k} q^k \Bigg) \\
&= g_t(0)p^{\abs{B_t}-m_t}q^{m_t} \Bigg( 1 + O\Bigg(\sum_{k=1}^{\abs{B_t}-m_t} t^{O(k)} q^k \Bigg) \Bigg),
\end{align*}
where here we have used $p>1/2$ and absorbed the $2^k$ term into $t^{O(k)}$. Hence by Lemma~\ref{le:ratiobound},
\[
\rho_1 = g_t(0)p^{\abs{B_t}-m_t}q^{m_t}\bigl(1+o(1)\bigr).
\]
It remains to show that $p^{\abs{B_t}-m_t}=1-o(1)$. This also follows from Lemma~\ref{le:ratiobound}, because $\abs{B_t}-m_t\geq t^c$ for some $c$, so the problem is equivalent to showing that $t^c\log p = t^c\log(1 - q) = o(1)$. Hence,
\[
\rho_1 = \bigl(1+o(1)\bigr) d^3 2^{d-1} q^{m_{t,d}}. \qedhere
\]
\end{proof}

The next two lemmas give a bound on $\rho_2$, which we defined as
\[
\rho_2 = \max \bigl\{\P_{p_n}\bigl(E_t(x)\cap E_t(y)\bigr) \, : \, \lVert x-y\rVert \leq 2t\bigr\}.
\]

\begin{lemma}\label{le:minunion}
Suppose that $x$, $y \in \T_n^d$ are both protected. Then $B_t(x) \cup B_t(y)$ contains at least~$m_t + 1$ uninfected sites.
\end{lemma}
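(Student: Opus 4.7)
The plan is to argue by contradiction. Suppose $\abs{P(B_t(x)) \cup P(B_t(y))} \leq m_t$. Combined with the lower bounds $\abs{P(B_t(x))}, \abs{P(B_t(y))} \geq m_t$ from Corollary~\ref{MinProtected}, this forces
\[
P(B_t(x)) = P(B_t(y)) = P(B_t(x)) \cup P(B_t(y)),
\]
a set of cardinality exactly $m_t$. Both $B_t(x)$ and $B_t(y)$ are therefore minimal, so by Theorem~\ref{th:configs} the common set $U := P(B_t(x))$ is semi-canonical around $x$ and also around $y$.

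To derive the contradiction I shall show that the centre of a semi-canonical set is intrinsically determined by the set. Define $f \colon U \to \Z_{\geq 0}$ by $f(v) := \max_{u \in U} \norm{u - v}$. Since $U \subset B_t(x)$ and the extreme points $v^\pm = x + p^\pm$ of the semi-canonical representation around $x$ each lie at $\ell_1$-distance exactly $t$ from $x$, we have $f(x) = t$; symmetrically $f(y) = t$. The main claim is that $x$ is the \emph{unique} vertex of $U$ with $f \leq t$, and this claim forces $x = y$, the desired contradiction.

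To prove the claim, fix $v \in U$ with $w := v - x \neq 0$, and let $j$ and $\varepsilon = (\varepsilon_i)_{i \neq j}$ be the direction and orientations of the semi-canonical description around $x$. If $v$ lies in the bulk (so $w_i \in \{0, \varepsilon_i\}$ for all $i \neq j$, $\norm{w} \leq t$ and $w \neq \pm t e_j$, which forces $\abs{w_j} \leq t - 1$), a short calculation using the identity $\abs{\varepsilon_k + w_k} - \abs{w_k} = 1$, valid whenever $w_k \in \{0, \varepsilon_k\}$, handles canonical and shifted extremes uniformly and gives
\[
\norm{p^+ - w} = t - w_j + \abs{w'}, \qquad \norm{p^- - w} = t + w_j + \abs{w'},
\]
where $\abs{w'} := \sum_{i \neq j} \abs{w_i}$. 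Taking the maximum yields $f(v) \geq t + \norm{w} \geq t + 1 > t$.

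If instead $v$ is itself an extreme point, say $v = v^+$ (the case $v = v^-$ being symmetric), I will exhibit $u \in U$ with $\norm{u - v^+} = 2t$. When $v^+ = x + t e_j$ is canonical, $u := v^-$ works: a direct calculation gives $\norm{v^+ - v^-} = 2t$ whether $v^-$ is the canonical tip $x - t e_j$ or a shifted extreme $x - (t-1)e_j - \varepsilon_{k'} e_{k'}$. When $v^+ = x + (t-1)e_j - \varepsilon_k e_k$ is shifted, I take $u := x - (t-1)e_j + \varepsilon_k e_k$; this vertex satisfies the non-$j$ canonical constraints and has $\norm{u - x} = t$ with $u - x \neq \pm t e_j$, so it lies in the bulk of $U$, and a direct computation yields $\norm{v^+ - u} = 2(t-1) + 2 = 2t$. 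In either sub-case $f(v^+) \geq 2t > t$. The only mildly delicate step is the bookkeeping supporting the identity $\abs{\varepsilon_k + w_k} - \abs{w_k} = 1$; once this cancellation is recognised the case analysis collapses to the two explicit formulas displayed above.
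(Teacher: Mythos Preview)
Your argument is correct, and the reduction to ``a semi-canonical set can have only one centre'' is the same as in the paper.  Where you diverge is in how you pin down the centre.  The paper's proof is dynamical: it observes (tersely) that the extreme sites of a semi-canonical column become infected immediately, and then the infection eats inward one layer per time step, so the only vertex that survives to time~$t$ is the geometric centre; the two short sentences about the cases $v^\pm=\pm te_1$ and $v^\pm=(t-1)e_1-e_2$ are checking this propagation at the two ends.  Your proof is purely metric: you show that in a semi-canonical set~$U$ the centre is the unique point $v\in U$ with $\max_{u\in U}\lVert u-v\rVert\le t$, via the clean identity $\lVert p^\pm - w\rVert = t\mp w_j + |w'|$ for bulk points and an explicit antipode for each extreme point.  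Your route is longer but entirely elementary and never touches the update rule after the initial reduction; it also yields the slightly stronger statement that the centre is \emph{intrinsically} determined by the set.  One small point worth making explicit in your write-up: the equality $P(B_t(x))=P(B_t(y))$ holds even though ``protected'' is defined relative to different centres, because each set has size $\ge m_t$ by Corollary~\ref{MinProtected} and both sit inside the set of at most $m_t$ initially uninfected sites in $B_t(x)\cup B_t(y)$, forcing all three to coincide.
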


\begin{proof}
By translation invariance we may assume that $x=0$. The result is trivial unless $\norm{y} \leq t$, so we suppose that this is the case and that $B_t(0)$ and $B_t(y)$ are minimal. By Corollary~\ref{MinProtected} it suffices to show that $P(B_t(0))\neq P(B_t(y))$, or equivalently that a semi-canonical set cannot protect two distinct vertices. Assume that $P(B_t)$ is 1-oriented and that $\varepsilon_2 = \ldots = \varepsilon_d = 1$, and suppose that $P(B_t)$ protects a site~$z$. If either extreme point of~$P(B_t)$ is $te_1$ or $-te_1$ then we are forced to take $z=0$. Otherwise, we have a site in $P(B_t)$ of the form~$(t-1)e_1-e_2$, say, and $(t-2)e_1-e_2$ not in $P(B_t)$. This again forces $z=0$.
\end{proof}

\begin{lemma}\label{le:pxyBound}
Let $t=o(\log n/\log\log n)$ and let $p$ satisfy~\eqref{eq:pbound}. Then
\[
\rho_2 = O(\rho_1 q).
\]
\end{lemma}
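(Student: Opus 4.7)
The plan is to imitate the proof of Theorem~\ref{th:rho1bound}, using Lemma~\ref{le:minunion} to save one factor of~$q$. Fix $x$, $y \in \T_n^d$ with $\norm{x - y} \leq 2t$, set $U_{xy} := B_t(x) \cup B_t(y)$, let $N := \abs{U_{xy}}$, and let $h_t(k)$ denote the number of subsets of~$U_{xy}$ of size~$m_t + 1 + k$ that leave both $x$ and $y$ uninfected at time~$t$. By Lemma~\ref{le:minunion}, the event $E_t(x) \cap E_t(y)$ forces at least $m_t + 1$ uninfected sites in $U_{xy}$, so
\[
\P_{p_n}\bigl(E_t(x) \cap E_t(y)\bigr) = \sum_{k=0}^{N - m_t - 1} h_t(k)\, q^{m_t + 1 + k}\, p^{N - m_t - 1 - k}.
\]

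It suffices to show that $h_t(0) = O(1)$ and $h_t(k) = O(t^{Ck})$ for $k \geq 1$, where $C$ depends only on~$d$.  Given these bounds, Lemma~\ref{le:ratiobound} makes the tail contribution $\sum_{k \geq 1} h_t(k) q^k$ negligible and the factor $p^{N - m_t - 1 - k}$ equal to~$1 - o(1)$ (exactly as in the proof of Theorem~\ref{th:rho1bound}), so $\P_{p_n}(E_t(x) \cap E_t(y)) = O(q^{m_t + 1})$.  Combining with $\rho_1 = \Theta(q^{m_t})$ from Theorem~\ref{th:rho1bound} and taking the maximum over admissible pairs yields $\rho_2 = O(\rho_1 q)$.

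For $k \geq 1$, the bound $h_t(k) = O(t^{Ck})$ follows by the same stability-based counting argument used in Theorem~\ref{th:rho1bound}, applied to both balls $B_t(x)$ and~$B_t(y)$.  There are at most $k + 1$ non-minimal layers around each site; after absorbing at most $(3d+1)(k+1)$ layers into a ``bad'' region, each remaining band of minimal layers has width at least $3d + 1$, so Theorem~\ref{th:stability} forces a semi-canonical structure on each band, giving $O(1)$ choices per band, while sites in the bad layers contribute at most $O(t^{d-1})$ positions each.

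The main obstacle is the bound $h_t(0) = O(1)$. Any configuration $U^*$ of size exactly $m_t + 1$ that protects both $x$ and $y$ satisfies $\abs{U^* \cap B_t(x)}, \abs{U^* \cap B_t(y)} \in \{m_t, m_t + 1\}$ by Corollary~\ref{MinProtected}, and whichever intersection has size~$m_t$ must be semi-canonical by Theorem~\ref{th:configs}.  There are only $d^3 2^{d-1} = O(1)$ semi-canonical sets for~$x$ (Corollary~\ref{NumExtConfigs}), and the rigidity of these ``column'' configurations forces compatibility with the corresponding (near-)semi-canonical structure for~$y$: the direction of alignment and the orientations are essentially determined by~$y - x$, and the single extra site has only $O(1)$ admissible positions.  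A short case analysis --- according to which of the two intersections has size~$m_t$, and according to the relative position of~$y - x$ to the common column direction --- then bounds $h_t(0)$ by a constant depending only on~$d$, completing the argument.
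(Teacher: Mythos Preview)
Your approach is correct in outline and follows the same template as the paper: expand $\P_{p_n}(E_t(x)\cap E_t(y))$ as a sum over the number of uninfected sites in $B_t(x)\cup B_t(y)$, invoke Lemma~\ref{le:minunion} to start the sum at $m_t+1$, bound the coefficients $h_t(k)$ by $t^{O(k)}$, and finish with Lemma~\ref{le:ratiobound} exactly as in Theorem~\ref{th:rho1bound}.

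Where you diverge from the paper is in how you bound $h_t(k)$, and here you make life harder than necessary. The paper does \emph{not} redo the stability argument for the two-ball region, nor does it carry out a separate case analysis for $k=0$. Instead it splits the uninfected sites according to the partition $B_t(x)\setminus B_t(y)$, $B_t(y)\setminus B_t(x)$, $B_t(x)\cap B_t(y)$ into counts $(i,j,\ell)$ with $i+j+\ell=m_t+1+k$, and then simply overcounts:
\[
h_t(k) \leq \sum_{(i,j,\ell)} g_t(i+\ell-m_t)\,g_t(j+\ell-m_t),
\]
reusing the bound $g_t(m)=O\bigl(t^{O(m)}\bigr)$ already established in~\eqref{eq:gtkBound}. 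Since $i+j+2\ell-2m_t\leq 2(k+1)$ and the number of triples is $t^{O(1)}$, this gives $h_t(k)=O\bigl(t^{O(k)}\bigr)$ uniformly in $k$, with no separate treatment of $k=0$ and no need to re-examine minimal bands or semi-canonical structure in the union. Your direct approach would work, but the rigidity-based case analysis you sketch for $h_t(0)$ (pinning down the column direction and the ``single extra site'' from $y-x$) is genuinely fiddly to make precise, and the paper's decomposition bypasses it entirely. Note also that all the paper actually uses downstream is $\rho_2=o(\rho_1)$, so even a bound of the form $h_t(0)=t^{O(1)}$ would suffice.
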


In the applications all that we shall use is that $\rho_2=o(\rho_1)$.

\begin{proof}
Let $x$ and $y$ be sites in $\T^d$ such that $\lVert x-y\rVert \leq 2t$. If $E_t(x)\cap E_t(y)$ occurs then Lemma~\ref{le:minunion} says that $B_t(x) \cup B_t(y)$ contains at least~$m_t + 1$ uninfected sites. Let $h_t(k)$ denote the number of configurations of~$m_t + 1 + k$ uninfected sites in $B_t(x) \cup B_t(y)$ such that both $x$ and $y$ are protected. Thus,
\begin{equation}\label{eq:pxyBound}
\rho_2 \leq \sum_{k=0}^{2\abs{B_t}} h_t(k) q^{m_t+1+k},
\end{equation}
using the bound $p\leq 1$.

We count the number of valid configurations such that $B_t(x) \setminus B_t(y)$ contains exactly~$i$ uninfected sites, $B_t(y) \setminus B_t(x)$ contains exactly~$j$ uninfected sites, and $B_t(y) \cap B_t(x)$ contains exactly~$\ell$ uninfected sites, where
\[
i + j + \ell = m_t + 1 + k.
\]
For each such choice of $i$,~$j$, and~$\ell$, we bound the number of valid configurations from above by placing $i + \ell$ uninfected sites in $B_t(x)$ and $j + \ell$ uninfected sites in $B_t(y)$ independently. Thus,
\[
h_t(k) \leq \sum_{(i,j,\ell)} g_t(i+\ell-m_t) g_t(j+\ell-m_t),
\]
where the sum is over valid triples~$(i,j,\ell)$. Very crudely, there are at most
\[
(m_t+1+k)^3 = t^{O(1)}
\]
triples, using $m_t \leq t2^d$ from Corollary~\ref{MinProtected}. Using the bound on $g_t(k)$ from~\eqref{eq:gtkBound}, it follows that
\[
h_t(k) = O\big( t^{O(i+\ell-m_t+j+\ell-m_t)} \big).
\]
We have the trivial bound $\ell \leq m_t + 1 + k$, so we can simplify this expression to
\[
h_t(k) = O\big(t^{O(k)}\big).
\]
Combining this with~\eqref{eq:pxyBound} gives the following bound on $\rho_2$.
\[
\rho_2 = O(1) \sum_{k=0}^{2\abs{B_t}} t^{O(k)} q^{m_t+1+k}.
\]
Now we proceed as we did in Theorem~\ref{th:rho1bound} to estimate $\rho_1$. By Lemma~\ref{le:ratiobound}, we can say that the first term in this last equation dominates the sum, so we have
\[
\rho_2 = O\left( q^{m_t+1} \right) = O(\rho_1 q),
\]
using the estimate of~$\rho_1$ from~\eqref{eq:qest} for the last equality.
\end{proof}

Finally, we are ready to use the Stein-Chen method to prove Theorem~\ref{th:crittime}. For each $x \in \T_n^d$, let $N_x=B_{2t+1}(x)$ be the dependency neighbourhood of~$x$. Observe that with this choice of~$N_x$, $F_t(x)$ is independent of~$\{F_t(y) : y \notin N_x\}$, as required.

\begin{proof}[Proof of Theorem~\ref{th:poisson}]
Using the Stein-Chen method (Theorem~\ref{th:steinchen}), we just have to show that
\[
\min \{1,\lambda_n^{-1}\} \Biggl( \sum_{x\in\T_n^d}\sum_{y\in N_x} \rho_1^2 + \sum_{x\in\T_n^d} \sum_{y\in N_x\setminus\{x\}} \rho_2 \Biggr) \rightarrow 0,
\]
or, equivalently, that
\[
\min \{1,\lambda_n^{-1}\} n^d t^d ( \rho_1^2 + \rho_2 ) \rightarrow 0.
\]
Since $\lambda_n = n^d \rho_1$, the left-hand side is at most
\[
t^d \biggl( \rho_1 + \dfrac{\rho_2}{\rho_1} \biggr).
\]
Using Lemma~\ref{le:pxyBound}, which was the bound on $\rho_2$, this is
\[
t^d \rho_1 + t^d O(q_n).
\]
By the bound on $\rho_1$ from Theorem~\ref{th:rho1bound}, we have $\rho_1 = o(q_n)$. This and Lemma~\ref{le:ratiobound} mean that we may write the above expression as
\[
t^d \rho_1  +  t^d O(q_n) = O\bigl(t^d q_n\bigr) = o(1),
\]
which completes the proof.
\end{proof}

\begin{proof}[Proof of Theorem~\ref{th:crittime}]
By a standard coupling argument, the probability of percolating in time at most~$t$ is increasing in $p$. Therefore, if
\[
\liminf_{n\to\infty}\, (1 - p_n) n^{d/m_{t,d}} = \liminf_{n\to\infty} q_n n^{d/m_{t,d}} = \infty,
\]
then the result follows from the result for larger values of~$p$. Hence, we may assume that $p$ satisfies the usual bound,~\eqref{eq:pbound}, and that Theorem~\ref{th:poisson} applies. This tells us that $F_t(n)$ converges in distribution to~$\Po(\lambda_n)$, so that
\[
\P_{p_n}(F_t(n) = 0) = \bigl(1+o(1)\bigr) e^{-\lambda_n}.
\]
The mean~$\lambda_n$ is equal to~$n^d \rho_1$, and the estimate of~$\rho_1$ from Theorem~\ref{th:rho1bound} gives
\[
\rho_1 = \Theta(1) q_n^{m_{t,d}}.
\]
Therefore,
\begin{equation*}
\P_{p_n}(F_t(n) = 0) \to
	\begin{cases}
	1 & \text{if } \lim_{n\to\infty} q_n n^{d/m_{t,d}} = 0, \\
	0 & \text{if } \lim_{n\to\infty} q_n n^{d/m_{t,d}} = \infty,
	\end{cases}
\end{equation*}
as we wanted.

Finally, given $\alpha \in (0, 1)$, to determine $p_{\alpha}(t)$, simply observe that $\alpha \sim e^{-\lambda_n} \sim \exp(-d^3 2^{d-1} n^d q_n^{m_t})$ and solve for $p_n$.  This completes the proof of Theorem~\ref{th:crittime}.
\end{proof}

Theorem~\ref{th:conc} now follows easily.

\begin{proof}[Proof of Theorem~\ref{th:conc}]
Suppose that for all~$n$, $q_n$ satisfies~\eqref{eq:1ptrange}.  Then, by Theorem~\ref{th:crittime}, $\P_{p_n}(T \leq t - 1) = o(1)$ and $\P_{p_n}(T \geq t + 1) = o(1)$, which proves part~(i).

Suppose instead that for all~$n$, $q_n$ satisfies~\eqref{eq:2ptrange}.  Then $q_n \geq \bigl(n^{-d} \omega(n)\bigr)^{1/m_{t-1}}$, so, by Theorem~\ref{th:crittime}, we have $\P_{p_n}(T \leq t - 1) = o(1)$.  Similarly, $\P_{p_n}(T \geq t + 2) = o(1)$, so, with high probability, $T \in \{t, t + 1\}$.  Now suppose that $\lim_{n \to \infty} q_n^{m_t} n^d = c$.  Then
\[
\P_{p_n}(T = t) \sim \P_{p_n}(T \leq t) \sim	e^{-\lambda_n} \sim \exp\Bigl(-d^3 2^{d-1} n^d q_n^{m_t}\Bigr) \sim \exp\bigl(-d^3 2^{d-1} c\bigr).
\]
By a similar argument, we have $\P_{p_n}(T = t + 1) \sim 1 - \exp(-d^3 2^{d-1} c)$, which proves part~(ii).
\end{proof}

\section{The modified \texorpdfstring{$d$}{d}-neighbour model}\label{se:mod}

As noted in Section~\ref{se:fastintro}, the proof of Theorem~\ref{th:crittimemod} is very similar to that of Theorem~\ref{th:crittime}, so we shall only sketch the argument.

\begin{proof}[Sketch of proof of Theorem~\ref{th:crittimemod}]
We shall show that if the origin is protected under the modified $d$-neighbour model, then $\abs{P(B_t^d)} \geq 2t + 1$.  We shall also show that the only minimal configurations are columns centred at the origin.

First, we observe that if the origin is protected and there exists $r \geq 1$ such that $S_r$ contains only one protected site, then some hemisphere (without loss of generality, the set~$\{x \in B_t^d : x_1 \geq 0\}$) contains no protected sites, and the origin becomes infected by time~$r \leq t$, a contradiction.  By the same reasoning, if a layer contains exactly~two protected sites, then they must be antipodal points, that is, they must be of the form~$\pm te_i$ for some~$i \in [d]$.  Second, we observe that if a vertex is protected, then it must have an opposing pair of protected neighbours.  Combining these two observations shows that if the origin is protected and $\abs{P(B_t^d)} = 2t + 1$, then the protected sites must form a column centred at the origin.  Clearly, there are~$d$ such columns.

In this case, the analogue of the stability result (Theorem~\ref{th:stability}) is trivial.  Much as in the case of the standard $d$-neighbour model, we say that a sphere~$S_r$ is \emph{minimal} if $\abs{P(S_r)} = 2$ and that a ball~$B_r$ is \emph{minimal} if $\abs{P(B_r)} = 2r + 1$.

\begin{lemma}\label{le:modstability}
Suppose that the origin is protected and that for some $1 \leq r \leq t$, the sphere~$S_r$ is minimal.  Then $P(B_r)$ is a column of height~$2r + 1$ centred at the origin.
\end{lemma}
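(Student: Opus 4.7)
The plan is a short downward induction on the sphere radius, exploiting the fact that in the modified model a minimal layer is extremely rigid. Since $S_r$ is minimal, $\abs{P(S_r)} = 2$, and by the antipodal observation stated just before the lemma we may fix $j \in [d]$ with $P(S_r) = \{re_j, -re_j\}$. This is the base case. My aim is to show, by downward induction on $s$ from $r$ down to $1$, that in fact $P(S_s) = \{se_j, -se_j\}$ for every $s$ in this range.

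For the inductive step, assume $P(S_{s+1}) = \{\pm (s+1)e_j\}$ (where $1 \leq s \leq r-1$) and let $x \in P(S_s)$. Since $x$ is uninfected at time $t - s$, the modified $d$-neighbour rule forces the existence of a direction $i$ such that both $x + e_i$ and $x - e_i$ are uninfected at time $t - s - 1$; otherwise $x$ would be newly infected at step $t - s$. A direct $\ell_1$-norm check shows that regardless of the sign of $x_i$, at least one of $x \pm e_i$ has norm $s + 1$, so lies in $S_{s+1}$; call this vertex $y$. Then $y$ is uninfected at time $t - (s+1) = t - \norm{y}$, hence protected, so the inductive hypothesis forces $y \in \{\pm (s+1)e_j\}$. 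But the unique neighbour of $\pm (s+1)e_j$ at distance $s$ from the origin is $\pm se_j$ respectively, so $x \in \{se_j, -se_j\}$. This gives the inclusion $P(S_s) \subseteq \{se_j, -se_j\}$, and combined with the lower bound $\abs{P(S_s)} \geq 2$ supplied by the hemisphere observation (valid because the origin is protected and $1 \leq s \leq t$), we obtain equality.

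Taking the union over $s$ and adjoining $P(S_0) = \{0\}$ then yields $P(B_r) = \{se_j : -r \leq s \leq r\}$, a column of height $2r + 1$ centred at the origin. The main point to verify is the norm computation underlying the inductive step: the opposing pair produced by the modified rule always contains at least one vertex \emph{further} from the origin (when $x_i = 0$ both neighbours jump outward; when $x_i \neq 0$ exactly one does), and this is the mechanism that lets the induction propagate information from $S_{s+1}$ back down to $S_s$. Once this is noted, no real obstacle remains, which is consistent with the authors' comment that the stability analogue in this model is far simpler than in the standard $d$-neighbour model.
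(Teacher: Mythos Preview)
Your proof is correct and follows essentially the same approach as the paper's own proof: both argue by downward induction on the radius, using that a protected vertex at level $s$ must have a protected out-neighbour in $S_{s+1}$ (your opposing-pair-plus-norm-check is exactly the contrapositive of the paper's ``$x$ has no protected out-neighbours, which means that $x$ is not protected''). You are slightly more explicit than the paper in invoking the hemisphere lower bound $\lvert P(S_s)\rvert \geq 2$ to close the induction, but the underlying argument is identical.
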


\begin{proof}
Let $S_r$ be a minimal layer.  As shown above, the protected sites in $S_r$ must be of the form~$\pm re_i$ for some $i \in [d]$.  If $x \in S_{r-1}$ is any site besides~$\pm (r - 1)e_i$, then $x$ has no protected out-neighbours, which means that $x$ is not protected.  Thus, the only protected sites in $S_{r-1}$ are $(r - 1)e_i$ and $-(r - 1)e_i$.  Iterating this argument shows that $P(B_r)$ is a column centred at the origin.
\end{proof}

Let $g_t(k)$ denote the number of arrangements of~$2t + 1 + k$ uninfected sites in $B_t$ such that the origin is protected.  We bound $g_t(k)$ from above as follows.  There are at most~$k$ non-minimal layers in $B_t$, which means that there are a total of at most~$3k$ uninfected sites in these layers.  We place uninfected vertices in each of these layers arbitrarily.  Each such layer contains $O(t^{d-1})$ vertices, so the number of ways of placing the uninfected sites in these layers is at most
\[
\dbinom{c_1 t^{d-1}}{3k} = t^{O(k)}.
\]
Note that, as in the case of the standard $d$-neighbour model, the exponent on the right-hand side does not depend on~$t$.  Now we turn to the uninfected sites in the minimal layers.  By Lemma~\ref{le:modstability}, if any layer~$S_r$ is minimal, then $P(B_r)$ is a column of height~$2r + 1$ centred at the origin.  There are $d$ choices for this column.  Hence
\[
g_t(k) = O\bigl(t^{O(k)}\bigr).
\]

Next, it is easy to see that if $x$ and $y$ are distinct protected vertices, then $B_t(x) \cup B_t(y)$ contains at least~$2t + 2$ uninfected vertices, because if $B_t(x)$ is minimal, then the column that protects $x$ cannot also protect $y$.

Finally, because $2t + 1$ is linear in $t$, the same argument as in the proof of Lemma~\ref{le:ratiobound} shows that if $q = 1 - p \leq Cn^{-d/(2t + 1)}$ for some $C > 0$, then for all~$c > 0$, we have $t^c q = o(1)$ as $n \to \infty$.  The rest of the proof of Poisson convergence then follows as in Section~\ref{se:proofs}.  Indeed, letting $F_t(n)$ denote the number of sites that are uninfected at time~$t$ and setting $\mu_n = \E F_t(n)$, it follows that
\[
\mu_n = \bigl(1 + o(1)\bigr) d n^d q_n^{2t + 1}
\]
and that if $q_n = 1 - p_n \leq Cn^{-d/(2t + 1)}$ for some $C > 0$, then
\[
\dTV\bigl(F_t(n), \Po(\mu_n)\bigr) = O\bigl(t^d q_n\bigr) = o(1),
\]
as we wanted.
\end{proof}

\section{Possible generalizations and conjectures}\label{se:open}

\emph{Other thresholds.} It is possible to generalize the results of this paper to $r$-neighbour bootstrap percolation in $d \geq 2$ dimensions for all~$2 \leq r \leq d$. Call a subset~$X$ of~$B_t^d$ \emph{$(d, r)$-canonical} if there exist $j_1,\dots,j_{r-1}\in[d]$ and for each $i\in\{j_1,\dots,j_{r-1}\}$ an orientation~$\varepsilon_i\in\{-1,1\}$ such that
\[
X = \bigl\{(x_1,\dots,x_d)\in B_t^d : x_i\in\{0,\varepsilon_i\} \text{ for all } i \in \{j_1,\dots,j_{r-1}\} \bigr\}.
\]
A $(d, d)$-canonical set is canonical and in general a $(d, r)$-canonical set is a union of~$2^{r-1}$ $(d - r + 1)$-dimensional affine subspaces intersected with~$B_t^d$.

Let $m_t(d,r)$ be the size of a $(d, r)$-canonical set in $\Z^d$ of radius~$t$. (So $m_t(d,d) = m_{t,d}$.) As usual, we let $P(X)$ denote the set of protected sites in $X$ and let $g_t(k)$ denote the number of configurations of~$m_t(d,r) + k$ sites in $B_t^d$ such that the origin is protected under $r$-neighbour bootstrap percolation.  The following claims are proved in~\cite{speedallr}.

\begin{claim}\label{cl:allrmin}
Let $t\geq 0$ and $d \geq r \geq 2$. Suppose that the origin is protected under $r$-neighbour bootstrap percolation. Then
\[
\lvert P(B_t^d) \rvert \geq m_t(d,r).
\]
Moreover, the number of configurations of protected sites which attain this bound does not depend on~$t$; and $g_t(k) = O(t^{O(k)})$, where the implicit constants depend only on $d$~and~$r$.
\end{claim}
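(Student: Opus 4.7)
The plan is to mirror the structure of Section~\ref{se:extremal} for $r$-neighbour bootstrap percolation, with a single underlying change: a protected vertex now has at most $r-1$ non-protected neighbours rather than at most $d-1$. First, I would prove an $r$-neighbour analogue of Lemma~\ref{le:key}: for any protected $x$, any configuration $C$ with $a$ free coordinates, and any $k \leq t - \norm{x}$, the set $P_k^C(x)$ has cardinality at least the size of the layer-$k$ part of a $(d,r)$-canonical set with the appropriate constraints dictated by $C$. The proof would be by induction on $a+k$, and the two-case split (opposing $C$-compatible pair versus no such pair) together with the double-counting arguments on bipartite graphs between consecutive spheres should carry over essentially verbatim. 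The only change occurs when bounding the minimum degree of a vertex $y$ in the lower class: the estimate ``at most $d-1$ non-protected neighbours'' is replaced by ``at most $r-1$ non-protected neighbours'', which weakens but does not destroy the bound provided $r \geq 2$. Summing the resulting inequalities over $k = 0, 1, \dots, t$ yields $\abs{P(B_t^d)} \geq m_t(d,r)$.

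Second, to show that the number of minimal configurations does not depend on $t$, I would follow the blueprint of Theorem~\ref{th:configs}: at each layer~$S_{k+1}$, the subset of sites with enough protected neighbours in $S_k$ is forced (once the choice of $r-1$ constrained coordinates $j_1,\dots,j_{r-1}$ and their orientations $\varepsilon_{j_i}$ has been made) up to a bounded number of ``extreme point'' choices at the boundaries of the $2^{r-1}$ affine subspaces making up a $(d,r)$-canonical set. The extreme point choices occur only at distance roughly~$t$ in each of the $d-r+1$ free directions and their total number depends only on~$d$ and~$r$, which gives the desired $t$-independence.

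The bound $g_t(k) = O(t^{O(k)})$ is the most demanding step and requires generalising Theorem~\ref{th:stability}. Specifically, I would prove: if the origin is protected and a sufficiently wide band~$S_{r_1}, \dots, S_{r_2}$ consists entirely of minimal layers, then $P(B_{r_2})$ is close to a $(d,r)$-canonical set. The components lemma (Lemma~\ref{le:components}) generalises to the statement that a wide band of minimal layers contains at most $2^{r-1}$ connected components of protected sites meeting its middle layer. The column-building induction of Claim~\ref{cl:stabind} must then be replaced by an argument that builds the $2^{r-1}$ affine subspaces simultaneously. Once generalised stability is in place, the counting proceeds exactly as in the proof of Theorem~\ref{th:rho1bound}: at most~$k$ layers deviate from minimality, each non-minimal layer together with the $O(d)$ layers following it contributes a factor of~$t^{O(1)}$ from arbitrary placements of uninfected sites, and the remaining minimal layers are determined up to $O(1)$ choices by generalised stability. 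The main obstacle is precisely this generalised stability result; in particular, ruling out ``hybrid'' configurations in which the protected set partially tracks one $(d,r)$-canonical set and partially another will require a more delicate simultaneous application of Lemma~\ref{le:minCcompat} and the strengthened version of Lemma~\ref{le:key} obtained in the first step.
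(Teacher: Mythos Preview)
The paper does not prove Claim~\ref{cl:allrmin}; it explicitly defers the proof to~\cite{speedallr}. So there is no in-paper argument to compare against, and I can only assess your plan on its own terms.

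Your overall strategy---rerun Section~\ref{se:extremal} with the parametric change $d-1 \mapsto r-1$ in the neighbour count---is the natural one and almost certainly what~\cite{speedallr} does. Two points deserve more care. First, a small slip: replacing ``at most $d-1$ non-protected neighbours'' by ``at most $r-1$'' \emph{strengthens} the degree lower bound in the double-counting step, not weakens it; this is as it should be, since $m_t(d,r)$ grows as $r$ decreases and you need a correspondingly stronger layer-wise inequality.

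Second, and more substantively, the assertion that Lemma~\ref{le:key} ``carries over essentially verbatim'' with this as the \emph{only} change is too optimistic. For $r<d$ a protected vertex has at least $2d-r+1$ protected neighbours, hence at least $d-r+1$ opposing pairs rather than at most one; the target bound in the $r$-neighbour analogue of~\eqref{eq:ProtectedSum} is no longer $\sum_{i\leq k}\binom{a}{i}$ but a quantity reflecting the $(d-r+1)$-dimensional cross-section of a $(d,r)$-canonical set, and the induction must build that higher-dimensional object rather than a column. Likewise, the ``extreme points'' of a $(d,r)$-canonical set are not two isolated vertices but the intersection of $B_t$ with a $(d-r)$-dimensional boundary, so the classification of minimal configurations (and the $t$-independence of their count) needs a genuinely new argument, not a direct appeal to Theorem~\ref{th:configs}. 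None of this looks fatal, but it is not verbatim, and your proposal would be stronger if it stated the correct $r$-neighbour analogue of~\eqref{eq:ProtectedSum} and sketched how the multi-way branching on the $d-r+1$ guaranteed opposing pairs replaces the two-case split.
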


\begin{claim}\label{cl:allrtime}
Let $d \geq r \geq 2$, let $t = t(n) = o\left((\log n/\log\log n)^{1/(d-r+1)}\right)$, let $(p_n)_{n=1}^\infty$ be a sequence of probabilities, let $\omega(n) \to \infty$,  and let $T = T(\T_n^d)$.  Under the $r$-neighbour model,
\begin{enumerate*}
\item if $q_n \leq \bigl(n^{-d}\!/\omega(n)\bigr)^{1/m_t(d,r)}$, then $\P_{p_n}(T \leq t) \to 1$ as $n\to\infty$;
\item if $q_n \geq \bigl(n^{-d} \omega(n)\bigr)^{1/m_t(d,r)}$, then $\P_{p_n}(T \leq t) \to 0$ as $n\to\infty$.
\end{enumerate*}
\end{claim}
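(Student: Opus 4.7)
The plan is to imitate the proof of Theorem~\ref{th:crittime} almost line-by-line, substituting Claim~\ref{cl:allrmin} for the combinatorial results of Section~\ref{se:extremal}. The first step is dimensional bookkeeping: since a $(d,r)$-canonical set is a union of $2^{r-1}$ translates of a $(d-r+1)$-dimensional $\ell_1$-ball of radius~$t$ intersected with $B_t^d$, we have
\[
m_t(d,r) = \Theta\bigl(t^{d-r+1}\bigr).
\]
Under the hypothesis $q_n \leq C n^{-d/m_t(d,r)}$ and $t = o\bigl((\log n/\log\log n)^{1/(d-r+1)}\bigr)$, the calculation of Lemma~\ref{le:ratiobound} carries over: for any constant $c>0$,
\[
t^c q_n \leq \exp\!\Bigl(c \log t - \tfrac{d}{m_t(d,r)} \log n + O(1)\Bigr) = \exp\bigl(-\Omega(\log\log n)\bigr) = o(1).
\]

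Next I set up the Stein-Chen framework exactly as in Section~\ref{se:proofs}. Let $F_t(x)$ be the indicator that $x$ is uninfected at time $t$ under the $r$-neighbour rule, let $F_t(n) = \sum_{x \in V(\T_n^d)} F_t(x)$, and set $\rho_1 = \P_{p_n}(F_t(x)=1)$ and $\rho_2 = \max\{\P_{p_n}(F_t(x)=1, F_t(y)=1) : 0 < \lVert x-y\rVert \leq 2t\}$. Using Claim~\ref{cl:allrmin},
\[
\rho_1 = \sum_{k=0}^{\abs{B_t}-m_t(d,r)} g_t(k)\, p^{\abs{B_t}-m_t(d,r)-k} q_n^{m_t(d,r)+k}.
\]
The bound $g_t(k) = O\bigl(t^{O(k)}\bigr)$ combined with the estimate $t^{O(1)} q_n = o(1)$ shows that the $k=0$ term dominates, giving
\[
\rho_1 = \bigl(1+o(1)\bigr)\, g_t(0)\, q_n^{m_t(d,r)},
\]
where $g_t(0)$ is the (constant) number of minimal configurations.

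To bound $\rho_2$ I need an analogue of Lemma~\ref{le:minunion}: if $x \neq y$ in $\T_n^d$ are both protected under $r$-neighbour percolation and $\lVert x - y\rVert \leq 2t$, then $\abs{P(B_t(x)) \cup P(B_t(y))} \geq m_t(d,r)+1$. This amounts to verifying that no single minimal configuration protects two distinct sites. This should follow directly from the $(d,r)$-canonical structure: a $(d,r)$-canonical set is rigid enough that removing the vertex at its centre (or translating the centre) leaves some coordinate direction uncovered, destroying protection. Granted this, the same counting argument used in Lemma~\ref{le:pxyBound}, where one splits the uninfected sites in $B_t(x) \cup B_t(y)$ into the three regions $B_t(x) \setminus B_t(y)$, $B_t(y) \setminus B_t(x)$, and $B_t(x) \cap B_t(y)$ and applies Claim~\ref{cl:allrmin} to each, yields $\rho_2 = O(\rho_1 q_n)$.

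Finally, applying Theorem~\ref{th:steinchen} with dependency neighbourhoods $N_x = B_{2t+1}(x)$, of size $O(t^d)$, and writing $\lambda_n = n^d \rho_1$, we obtain
\[
\dTV\bigl(F_t(n), \Po(\lambda_n)\bigr) \leq t^d\bigl(\rho_1 + \rho_2/\rho_1\bigr) = O\bigl(t^d q_n\bigr) = o(1).
\]
Hence $\P_{p_n}(T \leq t) = \P_{p_n}(F_t(n)=0) = \bigl(1+o(1)\bigr)e^{-\lambda_n}$ with $\lambda_n = \Theta\bigl(n^d q_n^{m_t(d,r)}\bigr)$, from which both parts of the claim follow. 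For values of $q_n$ outside the range $q_n \leq C n^{-d/m_t(d,r)}$, the standard monotone coupling in $p$ reduces part~(i) to the regime covered above. The main obstacle I expect is the stability-type input needed to justify the lower bound $\abs{P(B_t(x)) \cup P(B_t(y))} \geq m_t(d,r)+1$; while for $r=d$ this is handled by Theorem~\ref{th:stability}, for general $r$ one presumably needs a weaker but still nontrivial uniqueness statement about minimal configurations, which is where most of the new work in \cite{speedallr} should live.
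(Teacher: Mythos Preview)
The paper does not actually prove Claim~\ref{cl:allrtime}; it is stated in the ``Possible generalizations'' section and deferred to~\cite{speedallr}. Your outline is exactly the intended reduction: take Claim~\ref{cl:allrmin} as a black box replacing the contents of Section~\ref{se:extremal}, and rerun Section~\ref{se:proofs}. The dimensional bookkeeping $m_t(d,r) = \Theta(t^{d-r+1})$ and the resulting analogue of Lemma~\ref{le:ratiobound} are correct, and the Stein--Chen application goes through verbatim.

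Two small corrections. First, the monotone coupling at the end handles part~(ii), not part~(i): when $q_n$ is large you are trying to show $\P_{p_n}(T\leq t)\to 0$, and since this probability is increasing in $p$ you may decrease $p$ (increase $q_n$) only by passing to \emph{smaller} $q_n$, i.e.\ back into the regime $q_n \leq C n^{-d/m_t(d,r)}$ where the Poisson estimate applies. Part~(i) already lives in that regime.

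Second, regarding the gap you flag: for $r=d$ the statement that a minimal configuration protects a unique site (Lemma~\ref{le:minunion}) is not handled by Theorem~\ref{th:stability} but by the classification of extremal sets, Theorem~\ref{th:configs}. Stability is what gives $g_t(k)=O(t^{O(k)})$; the uniqueness-of-centre fact is a separate (and easier) consequence of knowing the exact shape of minimal configurations. For general $r$ you are right that this is the one piece not contained in Claim~\ref{cl:allrmin} as stated, since that claim only asserts that the number of minimal configurations is bounded independently of $t$, not that they are precisely the $(d,r)$-canonical sets. So the missing ingredient is the classification of minimal configurations for general $r$, from which the analogue of Lemma~\ref{le:minunion} follows by the same short argument as in the paper.
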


\emph{Range of\/~$t$.} Theorem~\ref{th:crittime} gives the critical probability for percolation by time~$t$ for values of~$t$ up to~$o(\log n/\log\log n)$, or in the dual form, it gives a concentration result for the percolation time~$T$ for sequences of probabilities close to~$p_n = 1 - n^{-d/m_{t,d}}$ for some $t=o(\log n/\log\log n)$. Were the results to hold for $t$ as large as~$o(\log n)$, then this would give the percolation time for all probabilities in the range~$1-o(1)$. We conjecture that this should be the case.

\begin{conjecture}
Theorem~\ref{th:crittime} holds for all~$t$ in the range~$t=o(\log n)$.
\end{conjecture}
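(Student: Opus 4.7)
The obstruction to extending Theorem~\ref{th:crittime} to $t=o(\log n)$ comes from two places in the proof, both of which introduce factors polynomial in $t$. First, the bound $g_t(k)=O(t^{O(k)})$ used to estimate $\rho_1$ in Theorem~\ref{th:rho1bound} forces a sum of the form $\sum_k t^{O(k)} q^k$, which is $o(1)$ only when $t^c q_n = o(1)$ for some constant $c=c(d)$. Second, the Stein--Chen bound of Theorem~\ref{th:poisson} carries an explicit factor $|N_x|=|B_{2t+1}|=\Theta(t^d)$. Since $q_n \sim \exp(-\log n / (t \cdot 2^{d-1}))$ at the threshold, both constraints reduce to $t\log t=o(\log n)$, which is exactly the hypothesis $t=o(\log n/\log\log n)$.

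My plan is to reformulate the Poisson approximation around \emph{columns} rather than \emph{sites}. Let $\mathcal{K}$ denote the set of $d^3 2^{d-1} n^d$ canonical columns in $\T_n^d$, let $I_K$ be the indicator that $K \in \mathcal{K}$ is entirely uninfected, and set $V_n = \sum_{K \in \mathcal{K}} I_K$. Each $I_K$ is Bernoulli with parameter $q_n^{m_t}$, and since each vertex lies in exactly $d^3 2^{d-1}$ canonical columns, each column has at most $O(m_t) = O(t)$ neighbours in the dependency graph -- a much smaller dependency neighbourhood than $B_{2t+1}$. A direct enumeration of the possible overlap sizes $|K \cup K'|$ for intersecting pairs of canonical columns, together with the fact that $q_n \to 0$ whenever $t = o(\log n)$, should give a Stein--Chen error on $V_n$ of order $O(q_n)=o(1)$ throughout the conjectured range, establishing Poisson convergence of $V_n$ with parameter $\mu_n = d^3 2^{d-1} n^d q_n^{m_t}$.

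To deduce Theorem~\ref{th:crittime} from Poisson convergence of $V_n$, I need $\dTV(V_n, F_t(n)) = o(1)$. By Theorem~\ref{th:stability}, every minimally protected site has at least one uninfected canonical column through it, so $V_n \geq F_t(n)$ provided every protected site has an uninfected column through it. Conversely, each uninfected column protects exactly its centre, so over-counting is controlled by the number of sites that lie at the centre of two or more uninfected canonical columns---an event of expected total size $O(n^d q_n^{2m_t - 1}) = o(1)$ at the threshold. Thus $V_n$ and $F_t(n)$ can be coupled to agree in distribution modulo the event $A$ that some site is protected but no canonical column through it is uninfected.

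The main obstacle is bounding $\P(A) = o(1)$. This requires a quantitative stability result valid for \emph{arbitrarily large} excess, to the effect that any protecting configuration for a fixed site which does not contain an entire canonical column must either be very restricted combinatorially or be much larger than $m_t$, so that the total probability of such configurations is $o(q_n^{m_t})$. The existing stability theorem requires $3d+1$ consecutive minimal layers, a hypothesis that fails as soon as a single non-minimal layer appears. A workable replacement might proceed by a cluster-expansion-style argument, decomposing any protecting configuration into a canonical ``skeleton'' plus a bounded number of well-separated local defects, and showing that each defect contributes a genuine factor of $q_n$ per vertex of perturbation -- summed against the combinatorial count, this should give the required $o(q_n^{m_t})$ bound. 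This is the crux of the conjecture; once such a stability result is available, the remaining steps are routine adaptations of the arguments in Section~\ref{se:proofs}.
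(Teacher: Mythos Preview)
This statement is a conjecture; the paper offers no proof, so there is nothing to compare your attempt against. Your proposal is explicitly a research programme rather than a proof, and you correctly locate where the essential difficulty lies.

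The reformulation around uninfected semi-canonical configurations $V_n = \sum_K I_K$ is a natural idea, and the Stein--Chen bound for $V_n$ does plausibly go through for all $t = o(\log n)$, since the pairwise term $\sum_{K' \in N_K \setminus \{K\}} q^{|K \cup K'|}$ is governed by the geometry of column overlaps rather than by a crude $t^{O(k)}$ count. Several details in your sketch are inaccurate, though none fatally: the figure $d^3 2^{d-1}$ counts semi-canonical configurations \emph{centred} at a given vertex, not configurations \emph{containing} it; the dependency neighbourhood $|N_K|$ is $O(t^2)$ rather than $O(t)$, since each of the $m_t = O(t)$ sites of $K$ lies in $O(t)$ configurations; and your overcounting exponent $2m_t - 1$ is wrong, because two semi-canonical configurations with the same centre, direction and orientation may differ in a single extreme point, so the correct bound is $O(n^d q^{m_t+1}) = O(\mu_n q)$, which still tends to zero.

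The genuine gap is exactly where you place it: bounding $\P(A)$. Your ``cluster-expansion'' decomposition is too vague to assess, and the obstruction is structural. The estimate $g_t(k) = O\bigl(t^{O(k)}\bigr)$ is precisely what produces the $t \log t = o(\log n)$ barrier; beating it would require something like a bound $O(C^k)$ with $C$ independent of $t$ on the number of protecting configurations of size $m_t + k$ that contain \emph{no} full semi-canonical set, and nothing in the paper's stability argument suggests how to obtain this. There is also a subtlety you elide: Theorem~\ref{th:stability} concludes only that $P(B_{r_2})$ is semi-canonical, where $r_2$ is the top of a minimal band --- this yields a semi-canonical set of \emph{protected} sites of radius $r_2$, not an \emph{initially uninfected} configuration of radius $t$, so even given a long minimal band the deduction that some radius-$t$ configuration lies entirely in $B_t \setminus A$ is not immediate. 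The programme is a reasonable line of attack, but, as you yourself concede, the crux remains open.
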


\emph{Other ranges of~$p$.} We have only looked at the percolation time for $p$ very close to~1. It is interesting to ask what one can say about the time for other values of~$p$.

\section{Acknowledgements}\label{se:ack}

We would like to thank an anonymous referee for helpful comments.

\bibliographystyle{amsplain}
\bibliography{Bootstrapbib,ProbBib,miscbib}

\end{document}